\documentclass[11pt]{article}

\usepackage{epsfig,epsf,fancybox}
\usepackage{amsmath}
\usepackage{mathrsfs}
\usepackage{amssymb}
\usepackage{graphicx}
\usepackage{color}
\usepackage[linktocpage,pagebackref,colorlinks,linkcolor=blue,anchorcolor=blue,citecolor=blue,urlcolor=blue,hypertexnames=false]{hyperref}
\usepackage{boxedminipage}
\usepackage{stmaryrd}
\usepackage{multirow}
\usepackage{booktabs}
\usepackage{accents}
\usepackage{cite}
\usepackage{float}
\usepackage[ruled,vlined,linesnumbered]{algorithm2e}

\textheight 8.5truein
\parskip 0.1in
\topmargin 0.25in
\headheight 0in
\headsep 0in
\textwidth 6.5truein
\oddsidemargin  0in
\evensidemargin 0in

\parindent0pt

\newtheorem{theorem}{Theorem}[section]

\newtheorem{lemma}[theorem]{Lemma}
\newtheorem{proposition}[theorem]{Proposition}

\,
\,

\newcommand{\be}{\begin{equation}}
\newcommand{\ee}{\end{equation}}
\newcommand{\ba}{\begin{array}}
\newcommand{\ea}{\end{array}}
\newcommand{\bpm}{\begin{pmatrix}}
\newcommand{\epm}{\end{pmatrix}}

\newcommand{\bea}{\begin{eqnarray}}
\newcommand{\eea}{\end{eqnarray}}
\newcommand{\beaa}{\begin{eqnarray*}}
\newcommand{\eeaa}{\end{eqnarray*}}

\newcommand{\bal}{\begin{align}}
\newcommand{\eal}{\end{align}}
\newcommand{\baln}{\begin{align*}}
\newcommand{\ealn}{\end{align*}}

\usepackage[normalem]{ulem} 




\newcommand{\vr}{r}

\newcommand{\vx}{x}
\newcommand{\vy}{y}

\newcommand{\vlam}{\lambda}



\newcommand{\cX}{{\mathcal{X}}}


\newcommand{\EE}{\mathbb{E}} 
\newcommand{\RR}{\mathbb{R}} 
\newcommand{\vzero}{0} 

\newcommand{\Prob}{{\mathrm{Prob}}} 




\newcommand{\st}{\mbox{ s.t. }}


\DeclareMathOperator*{\argmin}{arg\,min} 


\newcommand{\bc}{\begin{center}}
\newcommand{\ec}{\end{center}}

\newcommand{\bdm}{\begin{displaymath}}
\newcommand{\edm}{\end{displaymath}}

\newcommand{\beq}{\begin{equation}}
\newcommand{\eeq}{\end{equation}}

\newcommand{\bfl}{\begin{flushleft}}
\newcommand{\efl}{\end{flushleft}}

\newcommand{\bt}{\begin{tabbing}}
\newcommand{\et}{\end{tabbing}}

\newcommand{\beqn}{\begin{eqnarray}}
\newcommand{\eeqn}{\end{eqnarray}}

\newcommand{\beqs}{\begin{align*}} 
\newcommand{\eeqs}{\end{align*}}  



\newtheorem{remark}{Remark}[section]
\newtheorem{assumption}{Assumption}

\begin{document}


\title{Accelerated Primal-Dual Proximal Block Coordinate Updating Methods for Constrained Convex Optimization\thanks{This work is partly supported by NSF grant DMS-1719549 and CMMI-1462408.}}

\author{Yangyang Xu\thanks{xuy21@rpi.edu. Department of Mathematical Sciences, Rensselaer Polytechnic Institute} \and Shuzhong Zhang\thanks{zhangs@umn.edu. Department of Industrial \& Systems Engineering, University of Minnesota}
}

\date{}

\maketitle

\begin{abstract}
Block Coordinate Update (BCU) methods enjoy low per-update computational complexity because every time
only one or a few block variables would need to be updated among possibly a large
number of blocks.
They are also easily parallelized and thus have been particularly popular for solving problems involving large-scale dataset and/or variables.
In this paper, we propose a primal-dual BCU method for solving linearly constrained convex program in multi-block variables. The method is an accelerated version of a
primal-dual algorithm proposed by the authors, which applies randomization in selecting block variables to update and establishes an $O(1/t)$ convergence rate under convexity assumption. We show that the rate can be accelerated to $O(1/t^2)$ if the objective is strongly convex. In addition, if one block variable is independent of the others in the objective, we then show that the algorithm can be modified to
achieve a linear rate of convergence.
The numerical experiments
show that the accelerated method performs stably with a single set of parameters while the original
method needs to tune the parameters for different datasets in order to achieve a comparable level of performance.

\vspace{0.5cm}

\noindent {\bf Keywords:} primal-dual method, block coordinate update, alternating direction method of multipliers (ADMM), accelerated first-order method.

\vspace{0.1cm}

\noindent {\bf Mathematics Subject Classification:} 90C25, 95C06, 68W20.

\end{abstract}

\section{Introduction}
Motivated by the need to solve large-scale optimization problems and increasing capabilities in parallel computing, block coordinate update (BCU) methods have become particularly popular in recent years due to their low per-update computational complexity, low memory requirements, and their potentials in a distributive computing environment. 
In the context of optimization,
BCU first appeared in the form of block coordinate descent (BCD) type of algorithms  which can be applied to solve unconstrained smooth problems or those with separable nonsmooth terms in the objective (possibly with separable constraints). More recently, it has been developed for solving problems with nonseparable nonsmooth terms and/or constraint in a primal-dual framework.

In this paper, we consider the following linearly constrained multi-block structured optimization model: 
\begin{equation}\label{eq:mb-prob}
\min_x f(x)+\sum_{i=1}^M g_i(x_i), \st \sum_{i=1}^M A_i x_i =b,
\end{equation}
where $x$ is partitioned into disjoint blocks $(x_1,x_2,\ldots,x_M)$, $f$ is a smooth convex function with Lipschitz continuous gradient, 
and each $g_i$ is proper closed convex and possibly non-differentiable. Note that $g_i$ can include an indicator function of a convex set $\cX_i$, and thus \eqref{eq:mb-prob} can implicitly include certain separable block constraints in addition to the nonseparable linear constraint.

Many applications arising in statistical and machine learning, image processing, and finance can be formulated in the form of \eqref{eq:mb-prob} including the basis pursuit \cite{chen2001atomic}, constrained regression \cite{james2013pcreg},  support vector machine in its dual form \cite{cortes1995svm}, portfolio optimization \cite{markowitz1952portfolio}, just to name a few.

Towards finding a solution for \eqref{eq:mb-prob}, we will first present an accelerated proximal Jacobian alternating direction method of multipliers (Algorithm \ref{alg:ajadmm}), and then we generalize it to an accelerated {\it randomized}\/ primal-dual block coordinate update method (Algorithm \ref{alg:arpdc}). Assuming strong convexity on the objective function, we will establish $O(1/t^2)$ convergence rate results of the proposed algorithms by adaptively setting the parameters, where $t$ is the total number of iterations. In addition, if further assuming smoothness and the full-rankness we then obtain linear convergence of a modified method (Algorithm \ref{alg:rpdc-lin}).

\subsection{Related methods}
Our algorithms are closely related to randomized coordinate descent methods, primal-dual coordinate update methods, and accelerated primal-dual methods. In this subsection, let us briefly review the three classes of methods and discuss their relations to our algorithms.

\subsubsection*{Randomized coordinate descent methods}
In the absence of linear constraint, Algorithm \ref{alg:arpdc} specializes to randomized coordinate descent (RCD), which was first proposed in \cite{nesterov2012rcd} for smooth problems and later generalized in \cite{richtarik2014iteration, Lu_Xiao_rbcd_2015} to nonsmooth problems. It was shown that RCD converges sublinearly with rate $O(1/t)$, which can be accelerated to $O(1/t^2)$ for convex problems and achieves a linear rate for strongly convex problems. By choosing multiple block variables at each iteration, \cite{richtarik2012parallel} proposed to parallelize the RCD method and showed the same convergence results for parallelized RCD. This is similar to setting $m>1$ in Algorithm \ref{alg:arpdc}, allowing parallel updates on the selected $x$-blocks.

\subsubsection*{Primal-dual coordinate update methods}
In the presence of linear constraints, coordinate descent methods may fail to converge to a solution of the problem because fixing all but one block, the selected block variable may be uniquely determined by the linear constraint. To perform coordinate update to the linearly constrained problem \eqref{eq:mb-prob}, one effective approach is to update both primal and dual variables. Under this framework, 
the alternating direction method of multipliers (ADMM) is one popular choice. Originally, ADMM \cite{Glowinski1975, gabay1976dual} was proposed for solving two-block structured problems with separable objective (by setting $f=0$ and $M=2$ in \eqref{eq:mb-prob}), for which its convergence and also convergence rate have been well-established (see e.g.\ \cite{boley2013local, deng2012global, HeY12-adm, monteiro2010iteration}). However, directly extending ADMM to the multi-block setting such as \eqref{eq:mb-prob}
may fail to converge; 
see \cite{chen2016direct} for a divergence example of the ADMM even for solving a linear system of equations. Lots of efforts have been spent on establishing the convergence of multi-block ADMM under stronger assumptions (see e.g.~\cite{cai2014directstrong, chen2016direct, li2015convergent, lin2015global, gao2015first}) such as strong convexity or orthogonality conditions on the linear constraint. Without additional assumptions, modification is necessary for the ADMM applied to multi-block problems to be convergent; see \cite{deng2013parallel, He-Hou-Yuan, he2012alternating, xu2016hybrid} for example. Very recently, \cite{GXZ-RPDCU2016} proposed a randomized primal-dual coordinate (RPDC) update method, whose asynchronous parallel version was then studied in \cite{xu2017async-pdc}. Applied to \eqref{eq:mb-prob}, RPDC is a special case of Algorithm \ref{alg:arpdc} with fixed parameters. It was shown that RPDC converges with rate $O(1/t)$ under convexity assumption. More general than solving an optimization problem, primal-dual coordinate (PDC) update methods have also appeared in solving fixed-point or monotone inclusion problems \cite{pesquet2014class, peng2016arock, combettes2015stochastic, peng2016cf}. However, for these problems, the PDC methods are only shown to converge but no convergence rate estimates are known unless additional assumptions are made such as the strong monotonicity condition.

\subsubsection*{Accelerated primal-dual methods}
It is possible to accelerate the rate of convergence from $O(1/t)$ to $O(1/t^2)$ for gradient type methods. The first acceleration result was shown by Nesterov~\cite{nesterov1983method} for solving smooth unconstrained problems.
The technique has been generalized to accelerate gradient-type methods on possibly nonsmooth convex programs \cite{FISTA2009, nesterov2013gradient}. Primal-dual methods on solving linearly constrained problems can also be accelerated by similar techniques. Under convexity assumption, the augmented Lagrangian method (ALM) is accelerated in \cite{he2010aalm} from $O(1/t)$ convergence rate to $O(1/t^2)$ by using a similar technique as that in \cite{FISTA2009} to the multiplier update, and \cite{xu2017accelerated-alm} accelerates the linearized ALM using a technique similar to that in \cite{nesterov2013gradient}. Assuming strong convexity on the objective, \cite{goldstein2014fast} accelerates the ADMM method, and the assumption is weakened in \cite{xu2017accelerated-alm} to assuming the strong convexity
for one component of the objective 
function. On solving bilinear saddle-point problems, various primal-dual methods can be accelerated if either primal or dual problem is strongly convex \cite{chambolle2011first, dang2014randomized, bredies2016accelerated}. Without strong convexity, partial acceleration is still possible in terms of the rate depending on some other quantities; see e.g.~\cite{ouyang2015accelerated, chen2014optimal}.

\subsection{Contributions of this paper}
We accelerate the proximal Jacobian ADMM \cite{deng2013parallel} and also generalize it to an accelerated primal-dual coordinate updating method for linearly constrained multi-block structured convex program, where in the objective there is a nonseparable smooth function. With parameters fixed during all iterations, the generalized method reduces to that in \cite{GXZ-RPDCU2016} and enjoys $O(1/t)$ convergence rate under mere convexity assumption. By adaptively setting the parameters at different iterations, we show that the accelerated method has $O(1/t^2)$ convergence rate if the objective is strongly convex. In addition, if there is one block variable that is independent of all others in the objective (but coupled in the linear constraint) and also the corresponding component function is smooth, we modify the algorithm by treating that independent variable in a different way and establish a linear convergence result. Numerically, we test the accelerated method on quadratic programming and compare it to the (nonaccelerated) RPDC method in \cite{GXZ-RPDCU2016}. The results demonstrate that the accelerated method performs efficiently and stably
with the parameters automatically set in accordance of the analysis,
while the RPDC method needs to tune its parameters for different data in order to have a comparable performance.

\subsection{Nomenclature and basic facts} 

\textbf{Notations.} {For a positive integer $M$, we denote $[M]$ as $\{1,\ldots,M\}$.} We let $x_S$ denote the subvector of $x$ with blocks indexed by $S$. Namely, if $S=\{i_1, \ldots, i_m\}$, then $x_S=(x_{i_1},\ldots, x_{i_m})$. Similarly, $A_S$ denotes the submatrix of $A$ with columns indexed by $S$, and $g_S$ denotes the sum of component functions indicated by $S$. {We use $\nabla_i f(x)$ for the partial gradient of $f$ with respect to $x_i$ at $x$ and $\nabla_S f(x)$ with respect to $x_S$. For a nondifferentiable function $g$, $\tilde{\nabla} g(x)$ denotes a subgradient of $g$ at $x$.} We reserve $I$ for the identity matrix and use $\|\cdot\|$ for Euclidean norm. Given a symmetric positive semidefinite (PSD) matrix $W$, for any vector $v$ of appropriate size, we define $\|v\|_W^2=v^\top W v$, {and
\begin{equation}\label{eq:def-Delta}
\Delta_W(v^+,v^o,v)=\frac{1}{2}\big[\|v^+-v\|_W^2-\|v^o-v\|_W^2+\|v^+-v^o\|_W^2\big].
\end{equation}
If $W=I$, we simply use $\Delta(v^+,v^o,v)$.
}
Also, we denote
\begin{equation}\label{eq:nota-Phi}
{ g(x)=\sum_{i=1}^m g_i(x_i),}\quad F(x)=f(x)+g(x),\quad\Phi(\hat{x},x,\lambda)=F(\hat{x})-F(x)-\langle\lambda, A\hat{x}-b\rangle.
\end{equation}

\textbf{Preparations.} 
{ A point $(x^*,\lambda^*)$ is called a Karush-Kuhn-Tucker (KKT) point of \eqref{eq:mb-prob} if
}
\begin{equation}\label{eq:kkt-conds}
0\in\partial F(x^*)-A^\top \lambda^*,\quad Ax^*-b=0.
\end{equation}
{ For convex programs, the conditions in \eqref{eq:kkt-conds} are sufficient for $x^*$ to be an optimal solution of \eqref{eq:mb-prob}, and they are also necessary if a certain qualification condition holds (e.g., the Slater condition: there is $x$ in the interior of the domain of $F$ such that $Ax=b$).
}
Together with the convexity of $F$, \eqref{eq:kkt-conds} implies
\begin{equation}\label{eq:1stopt-cond}
\Phi(x,x^*,\lambda^*)\ge 0,\,\forall x.
\end{equation}

We will use the following lemmas as basic facts. { The first lemma is straightforward to verify from the definition of $\|\cdot\|_W$; the second one is similar to Lemma 3.3 in \cite{GXZ-RPDCU2016}; the third one is from Lemma 3.5 in \cite{GXZ-RPDCU2016}.} 

\begin{lemma} For any vectors $u, v$ and symmetric PSD matrix $W$ of appropriate sizes, it holds that
\begin{equation}\label{uv-cross}
u^\top W v = \frac{1}{2}\left[\|u\|_W^2-\|u-v\|_W^2+\|v\|_W^2\right].
\end{equation}
\end{lemma}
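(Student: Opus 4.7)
The plan is to prove the identity by a direct algebraic expansion of the $W$-weighted norms on the right-hand side, exploiting the symmetry of $W$. Since $W$ is symmetric positive semidefinite, the map $(u,v)\mapsto u^\top W v$ is a symmetric bilinear form, and the claimed identity is simply the polarization identity for this bilinear form, in complete analogy with the familiar Euclidean identity $u^\top v=\tfrac12(\|u\|^2-\|u-v\|^2+\|v\|^2)$.

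First I would invoke the definition $\|w\|_W^2=w^\top W w$ and expand the middle term on the right-hand side:
\[
\|u-v\|_W^2=(u-v)^\top W(u-v)=u^\top W u-u^\top W v-v^\top W u+v^\top W v.
\]
Because $W^\top=W$, the two cross terms coincide, $v^\top W u=u^\top W v$, and hence $\|u-v\|_W^2=\|u\|_W^2-2\,u^\top W v+\|v\|_W^2$. Substituting this into the right-hand side of \eqref{uv-cross} yields
\[
\tfrac{1}{2}\big[\|u\|_W^2-\|u-v\|_W^2+\|v\|_W^2\big]=\tfrac{1}{2}\big[2\,u^\top W v\big]=u^\top W v,
\]
which matches the left-hand side, completing the argument.

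There is no real obstacle in this proof: the only facts about $W$ that enter are the bilinearity of $(u,v)\mapsto u^\top W v$ and the symmetry $W^\top=W$, both of which are immediate from the hypothesis. In fact, positive semidefiniteness plays no role in the algebra itself; it is assumed only so that $\|\cdot\|_W$ is a bona fide seminorm and the notation in the statement is meaningful. Consequently the entire argument is a two-line calculation, and the author's remark that the lemma is ``straightforward to verify from the definition of $\|\cdot\|_W$'' is fully justified.
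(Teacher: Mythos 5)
Your proof is correct and is exactly the verification the paper has in mind: the paper gives no explicit proof, remarking only that the lemma ``is straightforward to verify from the definition of $\|\cdot\|_W$,'' and your expansion of $\|u-v\|_W^2$ using the symmetry of $W$ is that verification. Nothing further is needed.
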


\begin{lemma}\label{lem:xy-rate}
Given a function $\phi$, for a given $x$ and a random vector $\hat{x}$, if for any $\vlam$ (that may depend on $\hat{x}$)
it holds
$\EE\Phi(\hat{x},x,\lambda)\le \EE \phi(\lambda),$
then for any $\gamma>0$, we have
$$\EE\big[F(\hat{x})-F(x)+\gamma\|A\hat{x}-b\|\big]\le \sup_{\|\vlam\|\le \gamma}\phi(\vlam).$$
\end{lemma}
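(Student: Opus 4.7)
The plan is to exploit the freedom that $\lambda$ in the hypothesis is allowed to depend on $\hat{x}$, and to make a specific choice of $\lambda$ that converts the inner-product term $-\langle\lambda,A\hat{x}-b\rangle$ appearing in $\Phi$ into exactly the norm term $\gamma\|A\hat{x}-b\|$ that we want on the left-hand side of the conclusion.

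First I would define, pointwise in $\hat{x}$,
\[
\lambda \;=\; \begin{cases} -\gamma\dfrac{A\hat{x}-b}{\|A\hat{x}-b\|}, & A\hat{x}\neq b,\\[4pt] 0, & A\hat{x}=b.\end{cases}
\]
This $\lambda$ is a measurable function of $\hat{x}$, satisfies $\|\lambda\|\le\gamma$ surely, and has the key property $-\langle\lambda,A\hat{x}-b\rangle=\gamma\|A\hat{x}-b\|$ in both cases. Plugging this choice into the definition \eqref{eq:nota-Phi} of $\Phi$ gives, pointwise,
\[
\Phi(\hat{x},x,\lambda) \;=\; F(\hat{x})-F(x)+\gamma\|A\hat{x}-b\|.
\]

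Second, I would invoke the hypothesis with this $\lambda$. Since the bound $\EE\Phi(\hat{x},x,\lambda)\le \EE\phi(\lambda)$ is allowed for $\lambda$ depending on $\hat{x}$, taking expectations in the identity above yields
\[
\EE\bigl[F(\hat{x})-F(x)+\gamma\|A\hat{x}-b\|\bigr]\;=\;\EE\,\Phi(\hat{x},x,\lambda)\;\le\;\EE\,\phi(\lambda).
\]
Finally, because $\|\lambda\|\le\gamma$ holds with probability one, pointwise $\phi(\lambda)\le \sup_{\|\lambda'\|\le\gamma}\phi(\lambda')$, and the right-hand side is a deterministic constant; taking expectation preserves the inequality, giving $\EE\phi(\lambda)\le \sup_{\|\lambda'\|\le\gamma}\phi(\lambda')$, which completes the argument.

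There isn't really a hard step here; the proof is a one-line trick. The only mild subtleties are (i) handling the degenerate event $\{A\hat{x}=b\}$ so that $\lambda$ is defined everywhere (resolved by the case split above, using that $\gamma\|A\hat{x}-b\|=0$ there), and (ii) confirming that the hypothesis genuinely permits $\lambda$ to be a random variable built from $\hat{x}$, which is exactly how the lemma is stated. No additional regularity on $\phi$ is needed beyond what is implicit in the assumption.
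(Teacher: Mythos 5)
Your proposal is correct and is essentially identical to the paper's own proof: the same choice $\hat{\lambda}=-\gamma(A\hat{x}-b)/\|A\hat{x}-b\|$ (with $\hat{\lambda}=0$ on the degenerate event), the same pointwise identity $\Phi(\hat{x},x,\hat{\lambda})=F(\hat{x})-F(x)+\gamma\|A\hat{x}-b\|$, and the same final bound $\EE\phi(\hat{\lambda})\le\sup_{\|\lambda\|\le\gamma}\phi(\lambda)$. No issues.
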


\begin{proof}
Let $\hat{\lambda}=-\frac{\gamma(A\hat{x}-b)}{\|A\hat{x}-b\|}$ if $A\hat{x}-b\neq 0$, and $\hat{\lambda}=0$ otherwise. Then
$$\Phi(\hat{x},x,\hat{\lambda})=F(\hat{x})-F(x)+\gamma \|A\hat{x}-b\|.$$
In addition, since $\|\hat{\lambda}\|\le \gamma$, we have $\phi(\hat{\lambda})\le \sup_{\|\lambda\|\le \gamma}\phi(\lambda)$ and thus $\EE \phi(\hat{\lambda})\le \sup_{\|\lambda\|\le \gamma}\phi(\lambda)$. Hence, we have the desired result from $\EE\Phi(\hat{x},x,\hat{\lambda})\le \EE \phi(\hat{\lambda})$.
\end{proof}

\begin{lemma} \label{lem:equiv-rate}
Suppose $\EE\big[F(\hat{x})-F(x^*)+\gamma\|A\hat{x}-b\|\big] \le \epsilon.$
Then,
\[
\EE\|A\hat{x}-b\|\leq \frac{\epsilon}{\gamma-\|\lambda^*\|}, \mbox{ and }  -\frac{\epsilon\|\lambda^*\|}{\gamma-\|\lambda^*\|}\le\EE\big[F(\hat{x})-F(x^*)\big] \leq \epsilon,
\]
where $(x^*,\lambda^*)$ satisfies the optimality conditions in \eqref{eq:kkt-conds}, and we assume $\|\lambda^*\|< \gamma$.
\end{lemma}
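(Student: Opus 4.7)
The plan is to derive both estimates by combining the given expected-value bound with the optimality condition \eqref{eq:1stopt-cond} evaluated at $\hat x$. Concretely, applying \eqref{eq:1stopt-cond} with $x=\hat x$ (which is valid since that inequality holds for every $x$) yields
\[
F(\hat x)-F(x^*)-\langle\lambda^*,A\hat x-b\rangle\ge 0,
\]
and then Cauchy--Schwarz gives $F(\hat x)-F(x^*)\ge -\|\lambda^*\|\,\|A\hat x-b\|$. Taking expectation produces a lower bound on $\EE[F(\hat x)-F(x^*)]$ in terms of $\EE\|A\hat x-b\|$.

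Next, I would substitute this lower bound into the hypothesis $\EE[F(\hat x)-F(x^*)+\gamma\|A\hat x-b\|]\le\epsilon$ to obtain
\[
(\gamma-\|\lambda^*\|)\,\EE\|A\hat x-b\|\le\epsilon.
\]
Since by assumption $\gamma-\|\lambda^*\|>0$, dividing gives the feasibility estimate $\EE\|A\hat x-b\|\le\epsilon/(\gamma-\|\lambda^*\|)$.

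For the two-sided objective bound, the upper side is immediate: the hypothesis together with $\gamma\EE\|A\hat x-b\|\ge 0$ yields $\EE[F(\hat x)-F(x^*)]\le\epsilon$. For the lower side, I would plug the feasibility estimate just obtained into the inequality $\EE[F(\hat x)-F(x^*)]\ge -\|\lambda^*\|\,\EE\|A\hat x-b\|$, which produces the claimed bound $-\epsilon\|\lambda^*\|/(\gamma-\|\lambda^*\|)$.

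There is no real obstacle here: the entire argument is a two-line manipulation once \eqref{eq:1stopt-cond} is invoked. The only point requiring a little care is that \eqref{eq:1stopt-cond} must be applied pointwise (for every realization of $\hat x$) before taking expectation, so that the Cauchy--Schwarz step passes cleanly through $\EE[\,\cdot\,]$; the assumption $\gamma>\|\lambda^*\|$ is used exactly once, to divide by $\gamma-\|\lambda^*\|$.
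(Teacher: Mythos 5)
Your proof is correct and is exactly the standard argument: the paper does not reproduce a proof for this lemma (it cites Lemma 3.5 of the authors' earlier RPDC paper), and the argument there is the same combination of \eqref{eq:1stopt-cond} at $x=\hat{x}$, Cauchy--Schwarz, and division by $\gamma-\|\lambda^*\|>0$ that you give. Nothing is missing.
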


\textbf{Outline.} The rest of the paper is organized as follows. Section \ref{sec:ajadmm} presents the accelerated proximal Jacobian ADMM and its convergence results. In section \ref{sec:arpdc}, we propose an accelerated primal-dual block coordinate update method with convergence analysis. 
Section \ref{sec:linear} assumes more structure on the problem \eqref{eq:mb-prob} and modifies the algorithm in section \ref{sec:arpdc} to have linear convergence. Numerical results are provided in section \ref{sec:numerical}. Finally, section \ref{sec:conclusion} concludes the paper.

\section{Accelerated proximal Jacobian ADMM}\label{sec:ajadmm}
In this section, we propose an accelerated proximal Jacobian ADMM for solving \eqref{eq:mb-prob}. At each iteration, the algorithm updates all $M$ block variables in parallel by minimizing a linearized proximal approximation of the augmented Lagrangian function, and then it renews the multiplier. Specifically, it iteratively performs the following updates:
\begin{subequations}\label{eq:ajadmm}
\begin{align}
&x_i^{k+1}=\argmin_{x_i} \left\langle \nabla_i f(x^k)-A_i^\top(\lambda^k-\beta_k r^k), x_i\right\rangle + g_i(x_i) + \frac{1}{2}\|x_i-x_i^k\|_{P_i^k},\, i=1,\ldots, M,\label{eq:ajadmm-x}\\
&\lambda^{k+1}=\lambda^k-\rho_k r^{k+1},\label{eq:ajadmm-lam}
\end{align}
\end{subequations}
where $\beta_k$ and $\rho_k$ are scalar parameters, $P^k$ is an $M\times M$ block diagonal matrix with $P_i^k$ as its $i$-th diagonal block for $i=1,\ldots,M$, and $r^k=Ax^k-b$ denotes the residual. Note that \eqref{eq:ajadmm-x} consists of $M$ independent subproblems, and they can be solved in parallel.

Algorithm \ref{alg:ajadmm} summarizes the proposed method. It reduces to the proximal Jacobian ADMM in \cite{deng2013parallel} if $\beta_k,\rho_k$ and $P^k$ are fixed for all $k$ and there is no nonseparable function $f$. We will show that adapting the parameters as the iteration progresses can accelerate the convergence of the algorithm.

\begin{algorithm}[H]\caption{Accelerated proximal Jacobian ADMM for \eqref{eq:mb-prob}}\label{alg:ajadmm}
\DontPrintSemicolon
\textbf{Initialization:} choose $x^1$, set $\lambda^1=0$, and let $r^1=Ax^1-b$\;
\For{$k=1,2,\ldots$}{
Choose parameters $\beta_k, \rho_k$ and a block diagonal matrix $P^k$\;
Let $x^{k+1}\gets$ \eqref{eq:ajadmm-x} and $\lambda^{k+1}\gets$ \eqref{eq:ajadmm-lam} with $r^{k+1}=Ax^{k+1}-b$.\;
\If{a certain stopping criterion satisfied}{
Return $(x^{k+1},\lambda^{k+1})$.
}
}
\end{algorithm}

\subsection{Technical assumptions}
Throughout the analysis in this section, we make the following assumptions.
\begin{assumption}\label{assump:saddle-pt}
There exists $(x^*,\lambda^*)$ satisfying the KKT conditions in \eqref{eq:kkt-conds}.
\end{assumption}

\begin{assumption}\label{assump:full-lip-F}
$\nabla f$ is Lipschitz continuous with modulus $L_f$.
\end{assumption}

\begin{assumption}\label{assump:str-cvx-F}
{The function $g$ is strongly convex with modulus $\mu>0$.
}
\end{assumption}

The first two assumptions are standard, and the third one is for showing convergence rate of $O(1/t^2)$, where $t$ is the number of iterations. { Note that if $f$ is strongly convex with modulus $\mu_f>0$, we can let $f\gets f-\frac{\mu_f}{2}\|\cdot\|^2$ and $g\gets g+\frac{\mu_f}{2}\|\cdot\|^2$. This way, we have a convex function $f$ and a strongly convex function $g$. Hence, Assumption \ref{assump:str-cvx-F} is without loss of generality.} With only convexity, Algorithm \ref{alg:ajadmm} can be shown to converge at the rate $O(1/t)$ with parameters fixed for all iterations, and the order $1/t$ is optimal as shown in the very recent work \cite{li2016optimal}.

\subsection{Convergence results}
In this subsection, we show the $O(1/t^2)$ convergence rate result of Algorithm \ref{alg:ajadmm}. First, we establish a result of running one iteration of Algorithm \ref{alg:ajadmm}.
\begin{lemma}[One-iteration analysis]\label{lem:1iter-ajadmm}
Under Assumptions \ref{assump:full-lip-F} and \ref{assump:str-cvx-F}, let $\{(x^k,\lambda^k)\}$ be the sequence generated from Algorithm \ref{alg:ajadmm}. Then for any $k$ and $(x,\lambda)$ such that $Ax=b$, it holds that
\begin{equation}\label{eq:1iter-ajadmm}
\begin{aligned}
&~\Phi(x^{k+1},x,\lambda) \\
\le &~ \frac{1}{2\rho_k}\left[\|\lambda-\lambda^k\|^2-\|\lambda-\lambda^{k+1}\|^2+\|\lambda^k-\lambda^{k+1}\|^2\right]-\beta_k\|r^{k+1}\|^2  \\
&~-\frac{1}{2}\left[\|x^{k+1}-x\|_{P^k-\beta_k A^\top A+\mu I}^2-\|x^k-x\|_{P^k-\beta_k A^\top A}^2+\|x^{k+1}-x^k\|_{P^k-\beta_k A^\top A - L_f I}^2\right]. 
\end{aligned}
\end{equation}
\end{lemma}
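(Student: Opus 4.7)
The plan is a one-iteration primal-dual energy estimate: feed the first-order optimality of \eqref{eq:ajadmm-x} into an upper bound on $F(x^{k+1}) - F(x)$ obtained from smoothness and strong convexity, then turn every inner product into square-differences via the polarization identity \eqref{uv-cross}, and collect.

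Because \eqref{eq:ajadmm-x} is block-separable and $P^k$ is block diagonal, the optimality conditions of the $M$ subproblems stack into
\begin{equation*}
\nabla f(x^k) + \tilde\nabla g(x^{k+1}) = A^\top(\lambda^k - \beta_k r^k) - P^k(x^{k+1} - x^k)
\end{equation*}
for some $\tilde\nabla g(x^{k+1}) \in \partial g(x^{k+1})$. I next combine the descent lemma $f(x^{k+1}) \le f(x^k) + \langle \nabla f(x^k), x^{k+1} - x^k\rangle + \tfrac{L_f}{2}\|x^{k+1} - x^k\|^2$ with the convexity bound $f(x^k) \le f(x) + \langle\nabla f(x^k), x^k - x\rangle$ and the strong-convexity bound $g(x^{k+1}) - g(x) \le \langle\tilde\nabla g(x^{k+1}), x^{k+1} - x\rangle - \tfrac{\mu}{2}\|x^{k+1} - x\|^2$. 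Substituting the optimality identity into the linear term $\langle\nabla f(x^k) + \tilde\nabla g(x^{k+1}), x^{k+1} - x\rangle$, and using $Ax = b$ so that $\langle A^\top(\cdot), x^{k+1} - x\rangle = \langle\cdot, r^{k+1}\rangle$, gives
\begin{equation*}
F(x^{k+1}) - F(x) \le \langle\lambda^k - \beta_k r^k, r^{k+1}\rangle - \langle P^k(x^{k+1} - x^k), x^{k+1} - x\rangle + \tfrac{L_f}{2}\|x^{k+1} - x^k\|^2 - \tfrac{\mu}{2}\|x^{k+1} - x\|^2.
\end{equation*}

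Moving $-\langle\lambda, r^{k+1}\rangle$ to the left forms $\Phi(x^{k+1}, x, \lambda)$ and leaves $\langle\lambda^k - \lambda, r^{k+1}\rangle$ on the right. Using $r^{k+1} = \tfrac{1}{\rho_k}(\lambda^k - \lambda^{k+1})$ together with \eqref{uv-cross} produces the claimed $\lambda$-telescope $\tfrac{1}{2\rho_k}[\|\lambda - \lambda^k\|^2 - \|\lambda - \lambda^{k+1}\|^2 + \|\lambda^k - \lambda^{k+1}\|^2]$. For the $x$-side, I split $-\beta_k\langle r^k, r^{k+1}\rangle$ via $r^k = r^{k+1} - A(x^{k+1} - x^k)$; this peels off the explicit $-\beta_k\|r^{k+1}\|^2$ and, after using $r^{k+1} = A(x^{k+1} - x)$, leaves a cross term $\beta_k(x^{k+1} - x^k)^\top A^\top A(x^{k+1} - x)$. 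Applying \eqref{uv-cross} with $W = A^\top A$ to this cross term and with $W = P^k$ to $-\langle P^k(x^{k+1} - x^k), x^{k+1} - x\rangle$ expands both into weighted squares of $x^{k+1} - x^k$, $x^{k+1} - x$, and $x^k - x$; folding in the $\tfrac{L_f}{2}\|x^{k+1} - x^k\|^2$ and $-\tfrac{\mu}{2}\|x^{k+1} - x\|^2$ corrections collapses the weights to the exact matrix combinations stated.

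The step that really needs care is the sign on the $\beta_k A^\top A$ contribution: using the more obvious decomposition $\langle r^k, r^{k+1}\rangle = (x^k - x)^\top A^\top A(x^{k+1} - x)$ and applying \eqref{uv-cross} directly yields the wrong-sign combination $P^k + \beta_k A^\top A$ and fails to isolate a $-\beta_k\|r^{k+1}\|^2$ term. The split $r^k = r^{k+1} - A(x^{k+1} - x^k)$ is exactly what aligns the $A^\top A$ and $P^k$ pieces so that the combined weight comes out as $P^k - \beta_k A^\top A$ in all three quadratic terms, matching \eqref{eq:1iter-ajadmm}.
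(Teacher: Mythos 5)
Your proposal is correct and follows essentially the same route as the paper: the paper packages your first block of estimates (descent lemma for $f$, convexity of $f$, strong convexity of $g$, and the polarization of the proximal term) into its Lemma \ref{lem:S-basic-ineq} applied with $S=[M]$, and then handles the $\lambda$-term and the $-\beta_k\langle r^k,r^{k+1}\rangle$ term exactly as you do, via $r^{k+1}=\tfrac{1}{\rho_k}(\lambda^k-\lambda^{k+1})$ and the split $r^k=r^{k+1}-A(x^{k+1}-x^k)$ followed by \eqref{uv-cross}. Your closing observation about why the split of $r^k$ (rather than the direct polarization of $\langle A(x^k-x),A(x^{k+1}-x)\rangle$) is the right move is accurate and matches the paper's equation \eqref{eq:opt-ineq2-2}.
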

Using the above lemma, we are able to prove the following theorem.
\begin{theorem}\label{thm:ajadmm-g}
Under Assumptions \ref{assump:full-lip-F} and \ref{assump:str-cvx-F}, let $\{(x^k,\lambda^k)\}$ be the sequence generated by Algorithm~\ref{alg:ajadmm}. Suppose that the parameters are set to satisfy
\begin{equation}\label{eq:ajadmm-para-1}
0<\rho_k\le 2\beta_k,\quad P^k\succeq \beta_k A^\top A+ L_f I,\,\forall k\ge 1,
\end{equation}
and there exists a number $k_0$ such that for all $k\ge 2$,
\begin{eqnarray}
\frac{k+k_0+1}{\rho_k} &\le& \frac{k+k_0}{\rho_{k-1}}, \label{eq:ajadmm-para-2} \\
 (k+k_0+1)(P^k-\beta_k A^\top A) &\preceq& (k+k_0)(P^{k-1}-\beta_{k-1} A^\top A+\mu I). \label{eq:ajadmm-para-3}
\end{eqnarray}
Then, for any $(x,\lambda)$ satisfying $Ax=b$, we have
\begin{eqnarray}
 \sum_{k=1}^t(k+k_0+1)\Phi(x^{k+1},x,\lambda) +\sum_{k=1}^t\frac{k+k_0+1}{2}(2\beta_k-\rho_k)\|r^{k+1}\|^2& & \nonumber \\ 
 +\frac{t+k_0+1}{2}\|x^{t+1}-x\|^2_{P^t-\beta_t A^\top A+\mu I}
&\le & \phi_1(x,\lambda), \label{eq:ajadmm-rate-g}
\end{eqnarray}
where 
\begin{equation}\label{eq:def-lit-phi}\phi_1(x,\lambda)=\frac{k_0+2}{2\rho_1}\|\lambda-\lambda^1\|^2 + \frac{k_0+2}{2}\|x^1-x\|^2_{P^1-\beta_1 A^\top A}.
\end{equation}
\end{theorem}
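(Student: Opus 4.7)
\textbf{Proof plan for Theorem \ref{thm:ajadmm-g}.} The plan is to take the one-iteration bound \eqref{eq:1iter-ajadmm} of Lemma \ref{lem:1iter-ajadmm}, multiply through by the linear weight $w_k := k+k_0+1$, and then sum the resulting inequalities over $k=1,\ldots,t$, showing that everything on the right-hand side telescopes down to $\phi_1(x,\lambda)$ under the parameter choices \eqref{eq:ajadmm-para-1}--\eqref{eq:ajadmm-para-3}. This is the standard acceleration argument: with weights growing like $k$, dividing by $\sum_{k=1}^t w_k = \Theta(t^2)$ at the end will yield the $O(1/t^2)$ rate in the corollary.

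First I would clean up the dual piece of \eqref{eq:1iter-ajadmm}. Since $\lambda^{k+1}-\lambda^k=-\rho_k r^{k+1}$, we have $\frac{1}{2\rho_k}\|\lambda^k-\lambda^{k+1}\|^2=\frac{\rho_k}{2}\|r^{k+1}\|^2$; combined with the existing $-\beta_k\|r^{k+1}\|^2$ term, this collapses to $-\frac{2\beta_k-\rho_k}{2}\|r^{k+1}\|^2$, which is non-positive by \eqref{eq:ajadmm-para-1} and can be moved to the left. The cross term $\|x^{k+1}-x^k\|^2_{P^k-\beta_k A^\top A-L_f I}$ is non-negative by \eqref{eq:ajadmm-para-1}, so after being multiplied by $-\frac{w_k}{2}$ and moved to the left it only strengthens the inequality; we can simply drop it.

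Next I would multiply by $w_k$ and sum. For the $\lambda$-part the telescoping reads
\[
\sum_{k=1}^t \frac{w_k}{2\rho_k}\bigl[\|\lambda-\lambda^k\|^2-\|\lambda-\lambda^{k+1}\|^2\bigr]=\frac{w_1}{2\rho_1}\|\lambda-\lambda^1\|^2+\sum_{k=2}^t\Bigl(\frac{w_k}{2\rho_k}-\frac{w_{k-1}}{2\rho_{k-1}}\Bigr)\|\lambda-\lambda^k\|^2-\frac{w_t}{2\rho_t}\|\lambda-\lambda^{t+1}\|^2,
\]
and condition \eqref{eq:ajadmm-para-2} says exactly that $w_k/\rho_k\le w_{k-1}/\rho_{k-1}$, making the middle sum $\le 0$, so the whole expression is bounded by $\frac{w_1}{2\rho_1}\|\lambda-\lambda^1\|^2=\frac{k_0+2}{2\rho_1}\|\lambda-\lambda^1\|^2$. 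Writing $Q^k:=P^k-\beta_k A^\top A$, the same rearrangement for the $x$-part gives
\[
\sum_{k=1}^t \frac{w_k}{2}\bigl[\|x^{k+1}-x\|^2_{Q^k+\mu I}-\|x^k-x\|^2_{Q^k}\bigr]=\frac{w_t}{2}\|x^{t+1}-x\|^2_{Q^t+\mu I}+\sum_{k=2}^t\frac{1}{2}\|x^k-x\|^2_{w_{k-1}(Q^{k-1}+\mu I)-w_k Q^k}-\frac{w_1}{2}\|x^1-x\|^2_{Q^1},
\]
and condition \eqref{eq:ajadmm-para-3} is precisely $w_k Q^k\preceq w_{k-1}(Q^{k-1}+\mu I)$, so each middle term is non-negative. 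Moving the iterate-dependent terms to the left and discarding the positive middle sum leaves exactly the inequality \eqref{eq:ajadmm-rate-g}.

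I don't expect a real obstacle; the statement has been engineered so that \eqref{eq:ajadmm-para-1}--\eqref{eq:ajadmm-para-3} each kill one awkward term in the telescoping. The only thing requiring some care is bookkeeping the signs: one must be consistent that $\|x^{k+1}-x\|^2_{Q^k+\mu I}$ is \emph{subtracted} on the right of \eqref{eq:1iter-ajadmm}, so after bringing it to the left the remaining residual $\frac{w_t}{2}\|x^{t+1}-x\|^2_{Q^t+\mu I}$ appears with the \emph{positive} sign required in the theorem, and similarly for the $\|r^{k+1}\|^2$ coefficient $\frac{w_k(2\beta_k-\rho_k)}{2}$. Once this is checked, the stated bound follows directly, with $\phi_1(x,\lambda)$ matching the sum of the two surviving initial terms.
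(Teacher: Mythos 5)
Your proposal is correct and follows essentially the same route as the paper's proof: multiply the one-iteration bound \eqref{eq:1iter-ajadmm} by the weight $k+k_0+1$, use $\|\lambda^k-\lambda^{k+1}\|^2=\rho_k^2\|r^{k+1}\|^2$ to form the $\frac{2\beta_k-\rho_k}{2}\|r^{k+1}\|^2$ coefficient, drop the nonnegative $\|x^{k+1}-x^k\|^2_{P^k-\beta_k A^\top A-L_f I}$ term, and telescope the $\lambda$- and $x$-sums using \eqref{eq:ajadmm-para-2} and \eqref{eq:ajadmm-para-3} respectively. The sign bookkeeping you flag is handled exactly as you describe, so no gap remains.
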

In the next theorem, we provide a set of parameters that satisfy the conditions in Theorem \ref{thm:ajadmm-g} and establish the $O(1/t^2)$ convergence rate result.
\begin{theorem}[Convergence rate of order $1/t^2$]\label{thm:ajadmm-spc}
Under Assumptions \ref{assump:saddle-pt} through \ref{assump:str-cvx-F}, let $\{(x^k,\lambda^k)\}$ be the sequence generated by Algorithm \ref{alg:ajadmm} with parameters set to:
{
\begin{equation}\label{eq:ajadmm-para-spc}
\beta_k=\rho_k=k\beta,\quad P^k=kP+L_f I,\,\forall k\ge 1,
\end{equation}
}
where $P$ is a block diagonal matrix satisfying $0\prec P-\beta A^\top A\preceq \frac{\mu}{2}I$.
Then,
\begin{equation}\label{eq:ajadmm-rate-spc}
\max\left\{\beta\|r^{t+1}\|^2,\ \|x^{t+1}-x^*\|^2_{P-\beta A^\top A}\right\}
\le \frac{2}{t(t+k_0+1)}\phi_1(x^*,\lambda^*),
\end{equation}
where $k_0=\frac{2L_f}{\mu}$, 
and $\phi_1$ is defined in \eqref{eq:def-lit-phi}.
In addition, letting $\gamma=\max\left\{ 2\|\lambda^*\|,1+\|\lambda^*\|\right\}$ and
$$T=\frac{t(t+2k_0+3)}{2},\quad \bar{x}^{t+1}=\frac{\sum_{k=1}^t (k+k_0+1)x^k}{T},$$ we have
\begin{subequations}\label{eq:ajadmm-erg-rate-spc}
\begin{align}
&|F(\bar{x}^{t+1})-F(x^*)|\le \frac{1}{T}\max_{|\|\lambda\|\le\gamma}\phi_1(x^*,\lambda),\\
&\|A\bar{x}^{t+1}-b\|\le \frac{1}{T\max\{1,\|\lambda^*\|\} }\max_{\|\lambda\|\le\gamma}\phi_1(x^*,\lambda).
\end{align}
\end{subequations}
\end{theorem}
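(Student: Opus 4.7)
The plan is to verify that the parameter specialization \eqref{eq:ajadmm-para-spc} satisfies the three conditions \eqref{eq:ajadmm-para-1}--\eqref{eq:ajadmm-para-3} of Theorem~\ref{thm:ajadmm-g}, to instantiate that theorem at the KKT point $(x^*,\lambda^*)$, and then to convert the resulting weighted inequality into both the pointwise bounds \eqref{eq:ajadmm-rate-spc} and, through Lemmas~\ref{lem:xy-rate}--\ref{lem:equiv-rate}, the ergodic bounds \eqref{eq:ajadmm-erg-rate-spc}.

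First I would check the three parameter conditions. Writing $Q:=P-\beta A^\top A$, one has $P^k-\beta_k A^\top A = kQ+L_f I$, so condition \eqref{eq:ajadmm-para-1} is immediate, while \eqref{eq:ajadmm-para-2} reduces after clearing denominators to $k_0+1\ge 0$. The nontrivial check is \eqref{eq:ajadmm-para-3}: substituting and expanding, it is equivalent to
\[
(2k+k_0)\,Q \;\preceq\; \bigl((k+k_0)\mu-L_f\bigr)I.
\]
The hypothesis $Q\preceq\tfrac{\mu}{2}I$ reduces this to $k_0\mu\ge 2L_f$, which is saturated by $k_0=\tfrac{2L_f}{\mu}$. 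This matrix-inequality calculation is the main obstacle, and it is what pins down the precise value of $k_0$.

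With the conditions verified, I would apply Theorem~\ref{thm:ajadmm-g} at $(x,\lambda)=(x^*,\lambda^*)$. Since $\Phi(x^{k+1},x^*,\lambda^*)\ge 0$ by \eqref{eq:1stopt-cond} and $2\beta_k-\rho_k=k\beta$, inequality \eqref{eq:ajadmm-rate-g} yields
\[
\sum_{k=1}^t\frac{(k+k_0+1)k\beta}{2}\|r^{k+1}\|^2+\frac{t+k_0+1}{2}\|x^{t+1}-x^*\|_{P^t-\beta_t A^\top A+\mu I}^2\le\phi_1(x^*,\lambda^*).
\]
Retaining only the $k=t$ summand gives $\beta\|r^{t+1}\|^2\le\tfrac{2\phi_1(x^*,\lambda^*)}{t(t+k_0+1)}$. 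For the primal term, the identity $P^t-\beta_t A^\top A+\mu I=t(P-\beta A^\top A)+(L_f+\mu)I\succeq t(P-\beta A^\top A)$ yields $\|x^{t+1}-x^*\|_{P-\beta A^\top A}^2\le\tfrac{2\phi_1(x^*,\lambda^*)}{t(t+k_0+1)}$, completing \eqref{eq:ajadmm-rate-spc}.

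Finally, for the ergodic bounds I would form the weighted average $\bar x^{t+1}$ matching the weights on $\Phi(x^{k+1},\cdot,\cdot)$ in \eqref{eq:ajadmm-rate-g}, noting $T=\sum_{k=1}^t(k+k_0+1)=\tfrac{t(t+2k_0+3)}{2}$. Convexity of $F$ together with linearity of $\langle\lambda,A\cdot-b\rangle$ upgrades the weighted $\Phi$-sum to $\Phi(\bar x^{t+1},x^*,\lambda)\le\tfrac{\phi_1(x^*,\lambda)}{T}$ for every $\lambda$. Lemma~\ref{lem:xy-rate} then gives $F(\bar x^{t+1})-F(x^*)+\gamma\|A\bar x^{t+1}-b\|\le\tfrac{1}{T}\sup_{\|\lambda\|\le\gamma}\phi_1(x^*,\lambda)$, and the specific choice $\gamma=\max\{2\|\lambda^*\|,1+\|\lambda^*\|\}$ is designed so that $\gamma-\|\lambda^*\|=\max\{1,\|\lambda^*\|\}$ and $\|\lambda^*\|/(\gamma-\|\lambda^*\|)\le 1$; feeding this $\epsilon$ into Lemma~\ref{lem:equiv-rate} then delivers both bounds of \eqref{eq:ajadmm-erg-rate-spc}.
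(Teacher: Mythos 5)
Your proposal is correct and follows essentially the same route as the paper: verify that \eqref{eq:ajadmm-para-spc} satisfies \eqref{eq:ajadmm-para-1}--\eqref{eq:ajadmm-para-3} (your explicit reduction of \eqref{eq:ajadmm-para-3} to $(2k+k_0)(P-\beta A^\top A)\preceq((k+k_0)\mu-L_f)I$ and hence to $k_0\mu\ge 2L_f$ is exactly the computation the paper leaves as ``not difficult to verify''), then invoke Theorem~\ref{thm:ajadmm-g} at $(x^*,\lambda^*)$, drop the nonnegative $\Phi$-terms and the $k<t$ residual terms to get \eqref{eq:ajadmm-rate-spc}, and pass to the weighted average via convexity together with Lemmas~\ref{lem:xy-rate} and \ref{lem:equiv-rate} for \eqref{eq:ajadmm-erg-rate-spc}. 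No gaps.
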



\section{Accelerating randomized primal-dual block coordinate updates}\label{sec:arpdc}
In this section, we generalize Algorithm \ref{alg:ajadmm} to a randomized setting where the user may choose to update a subset of blocks at each iteration. Instead of updating all $M$ block variables, we randomly choose a subset of them to renew at each iteration. Depending on the number of processors (nodes, or cores), we can choose a single or multiple block variables for each update.

\subsection{The algorithm}
Our algorithm is an accelerated version of the randomized primal-dual coordinate update method recently proposed in \cite{GXZ-RPDCU2016}, {for which we shall use RPDC as its acronym.\footnote{In fact, \cite{GXZ-RPDCU2016} presents a more general algorithmic framework. It assumes two groups of variables, and each has multi-block structure. Our method in Algorithm \ref{alg:arpdc} is an accelerated version of one special case of Algorithm 1 in \cite{GXZ-RPDCU2016}.}} At each iteration, it performs a block proximal gradient update to a subset of randomly selected primal variables while keeping the remaining ones fixed, followed by an update to the multipliers. Specifically,
at iteration $k$, it selects an index set $S_k\subset \{1,\ldots,M\}$ with cardinality $m$ and performs the following updates:
\begin{subequations}\label{eq:fw-arpdc}
\begin{align}
&x_i^{k+1}=\left\{
\begin{array}{ll}\argmin\limits_{x_i}\langle \nabla_i f(x^k)-A_i^\top(\lambda^k-\beta_k r^k), x_i\rangle+g_i(x_i)+\frac{\eta_k}{2}\|x_i-x_i^k\|^2, &\text{ if } i\in S_k,\\[0.1cm]
x_i^k,& \text{ if }i\not\in S_k\end{array}\right.\label{eq:fw-arpdc-x}\\
& r^{k+1}=r^k+\sum_{i\in S_k}A_i(x_i^{k+1}-x_i^k),\\
& \lambda^{k+1}=\lambda^k - \rho_k r^{k+1},\label{eq:fw-arpdc-lam}
\end{align}
\end{subequations}
where $\beta_k, \rho_k$ and $\eta_k$ are algorithm parameters, and their values will be determined later. Note that we use $\frac{\eta_k}{2}\|x_i-x_i^k\|^2$ in \eqref{eq:fw-arpdc-x} for simplicity. It can be replaced by a PSD matrix weighted norm square term as in \eqref{eq:ajadmm-x}, and our convergence results still hold.

Algorithm \ref{alg:arpdc} summarizes the above method. If the parameters $\beta_k, \rho_k$ and $\eta_k$ are fixed during all the iterations, i.e., constant parameters, the algorithm reduces to a special case of the RPDC method in \cite{GXZ-RPDCU2016}. Adapting these parameters to the iterations, we will show that Algorithm \ref{alg:arpdc} enjoys faster convergence rate than RPDC if the problem is strongly convex.

\begin{algorithm}[H]\caption{Accelerated randomized primal-dual block coordinate update method for \eqref{eq:mb-prob}}\label{alg:arpdc}
\DontPrintSemicolon
\textbf{Initialization:} choose $x^1$, set $\lambda^1=0$, let $r^1=Ax^1-b$, and choose parameter $m$\;
\For{$k=1,2,\ldots$}{
Select $S_k\subset \{1,2,\ldots,M\}$ uniformly at random with $|S_k|=m$.\;
Choose parameters $\beta_k, \rho_k$ and $\eta_k$.\;
Let $x^{k+1}\gets$ \eqref{eq:fw-arpdc-x} and $\lambda^{k+1}\gets$ \eqref{eq:fw-arpdc-lam}.\;
\If{a certain stopping criterion satisfied}{
Return $(x^{k+1},\lambda^{k+1})$.
}
}
\end{algorithm}


\subsection{Convergence results}
In this subsection, we establish convergence results of Algorithm \ref{alg:arpdc} under Assumptions \ref{assump:saddle-pt} and \ref{assump:str-cvx-F}, and also the following partial gradient Lipschitz continuity assumption.

\begin{assumption}\label{assump:lip-F}
For any $S\subset\{1,\ldots,M\}$ with $|S|=m$, $\nabla_S f$ is Lipschitz continuous with a uniform constant $L_m$.
\end{assumption}

Note that if $\nabla f$ is Lipschitz continuous with constant $L_f$, then $L_m\le L_f$ and $L_M = L_f$. In addition, if $x^+$ and $x$ only differ on a set $S\subset [M]$ with cardinality $m$, then
\begin{equation}\label{eq:S-lip-ineq}
f(x^+) \le f(x) + \langle \nabla f(x), x^+-x\rangle + \frac{L_m}{2}\|x^+-x\|^2.
\end{equation} 

Similar to the analysis in section \ref{sec:ajadmm}, we first establish a result of running one iteration of Algorithm \ref{alg:arpdc}. Throughout this section, we denote $\theta=\frac{m}{M}$.
\begin{lemma}[One iteration analysis]\label{lem:1iter}
Under Assumptions \ref{assump:str-cvx-F} and \ref{assump:lip-F}, let $\{(x^k,\lambda^k)\}$ be the sequence generated from Algorithm \ref{alg:arpdc}.  Then for any $x$ such that $Ax=b$, it holds
\begin{align}
& ~\EE\left[\Phi(x^{k+1},x,\lambda^{k+1})+(\beta_k-\rho_k)\|r^{k+1}\|^2+ \frac{\mu}{2}\|x^{k+1}-x\|^2\right] \label{eq:sum-bd1} \\
\le & ~(1-\theta)\EE\left[\Phi(x^k,x,\lambda^k)+\beta_k\| r^k\|^2+\frac{\mu}{2}\|x^k-x\|^2\right] - \EE\left[\Delta_{\eta_k I-\beta_k A^\top A}(x^{k+1},x^k,x) - \frac{L_m}{2} \|x^{k+1}-x^k\|^2\right]. \nonumber
\end{align}
\end{lemma}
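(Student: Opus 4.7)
The plan is to mimic the deterministic one-iteration analysis of Lemma \ref{lem:1iter-ajadmm}, but with randomness entering in two places: the primal update only touches blocks in $S_k$, which through the block separability of $g$ and $\|\cdot\|^2$ will produce the $(1-\theta)$ factor on the ``previous'' potential; and the descent lemma for $f$ uses the partial Lipschitz constant $L_m$ from Assumption \ref{assump:lip-F} rather than $L_f$. I will derive an inequality that holds for every realization of $S_k$, and only take conditional expectation over $S_k$ at the end.

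Concretely, I would first write down the first-order optimality condition for the subproblem \eqref{eq:fw-arpdc-x} for each $i\in S_k$, combine it with the $\mu$-strong convexity of $g_i$ (which follows from the $\mu$-strong convexity of $g=\sum_i g_i$ on disjoint blocks) to obtain, for any $x_i$,
\[
g_i(x_i^{k+1})-g_i(x_i)\le \langle \nabla_i f(x^k)-A_i^\top(\lambda^k-\beta_k r^k)+\eta_k(x_i^{k+1}-x_i^k),\,x_i-x_i^{k+1}\rangle-\tfrac{\mu}{2}\|x_i-x_i^{k+1}\|^2,
\]
and sum over $i\in S_k$; for $i\notin S_k$ use the equality $g_i(x_i^{k+1})=g_i(x_i^k)$. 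Adding the partial Lipschitz bound $f(x^{k+1})\le f(x^k)+\langle\nabla f(x^k),x^{k+1}-x^k\rangle+\tfrac{L_m}{2}\|x^{k+1}-x^k\|^2$ from \eqref{eq:S-lip-ineq} yields a deterministic upper bound on $F(x^{k+1})-F(x)$. Taking conditional expectation over $S_k$, each block is selected with probability $\theta$, so the block-separable contributions split as $(1-\theta)(\mathrm{old})+\theta(\mathrm{virtual\ full\ update})$, where I denote by $\tilde x^{k+1}$ the update that would have been obtained if every block were selected and $\tilde r^{k+1}=A\tilde x^{k+1}-b$. The lingering gradient cross-term $\theta\langle\nabla f(x^k),x-x^k\rangle$ (which emerges by combining $\theta\langle\nabla f(x^k),\tilde x^{k+1}-x^k\rangle$ from the $f$ bound with $\theta\langle\nabla f(x^k),x-\tilde x^{k+1}\rangle$ from the summed $g$ bound) is then absorbed using convexity of $f$ in the form $\langle\nabla f(x^k),x-x^k\rangle\le f(x)-f(x^k)$, which collapses the coefficient on $f(x^k)-f(x)$ from $1$ to $(1-\theta)$. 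Finally, the dual update $\lambda^{k+1}=\lambda^k-\rho_k r^{k+1}$ together with $r^{k+1}-r^k=\sum_{i\in S_k}A_i(x_i^{k+1}-x_i^k)$ is used to rewrite the $\lambda^k-\beta_k r^k$ terms into $\Phi(x^{k+1},x,\lambda^{k+1})$, $(\beta_k-\rho_k)\|r^{k+1}\|^2$, and the quadratic $\Delta_{\eta_k I-\beta_k A^\top A}(x^{k+1},x^k,x)$ on the right-hand side.

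The main obstacle will be orienting the convexity inequality for $f$ correctly: used in the ``wrong'' direction one obtains a bound in which $f(x^k)-f(x)$ enters with coefficient $1$ instead of $(1-\theta)$, and the inequality cannot be closed. A secondary subtlety is the non-separable quadratic $\|\cdot\|_{A^\top A}^2$, which does not obey the ``$(1-\theta)(\mathrm{old})+\theta(\mathrm{full})$'' splitting of the separable norm. I would handle this by expanding $\Delta_{A^\top A}(x^{k+1},x^k,x)=\langle r^{k+1}-r^k,\,r^{k+1}\rangle$ and using $\EE[r^{k+1}]=(1-\theta)r^k+\theta\tilde r^{k+1}$; the cross term $\theta\beta_k\langle r^k,\tilde r^{k+1}\rangle$ arising from the $\beta_k r^k$ correction in the primal step then matches the contribution of $\EE[\Delta_{\eta_k I-\beta_k A^\top A}(x^{k+1},x^k,x)]$ exactly, so that the inequality closes in the stated form.
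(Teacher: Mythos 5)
Your plan reproduces the paper's proof in all essential respects: the paper's auxiliary Lemma \ref{lem:S-basic-ineq} is precisely your step of combining the blockwise optimality conditions with the $\mu$-strong convexity of $g$, the partial descent inequality \eqref{eq:S-lip-ineq} with constant $L_m$, the $\theta$-versus-$(1-\theta)$ splitting of the block-separable sums under $\EE_{S_k}$, and convexity of $f$ applied to the leftover term $\theta\langle x^k-x,\nabla f(x^k)\rangle$ to collapse the coefficient of $f(x^k)-f(x)$ to $(1-\theta)$; the dual-update substitution together with the realization-wise identity $\langle Ax^{k+1}-b,-\beta_k r^k\rangle=-\beta_k\|r^{k+1}\|^2+\beta_k\Delta_{A^\top A}(x^{k+1},x^k,x)$ (the paper's \eqref{eq:opt-ineq2-2}) then produces $\Phi(x^{k+1},x,\lambda^{k+1})$, $(\beta_k-\rho_k)\|r^{k+1}\|^2$ and $\Delta_{\eta_k I-\beta_k A^\top A}$. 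The one step that would not go through as you describe it is your treatment of the nonseparable residual term. Since $\Delta_{A^\top A}(x^{k+1},x^k,x)=\langle r^{k+1}-r^k,r^{k+1}\rangle$ is \emph{quadratic} in $r^{k+1}$, its expectation is not determined by $\EE[r^{k+1}]=(1-\theta)r^k+\theta\tilde r^{k+1}$, and neither $\EE\|r^{k+1}\|^2$ nor $\EE\Delta_{A^\top A}(x^{k+1},x^k,x)$ admits the $(1-\theta)/\theta$ splitting; consequently the assertion that the single cross term $\theta\beta_k\langle r^k,\tilde r^{k+1}\rangle$ ``matches $\EE[\Delta_{\eta_k I-\beta_k A^\top A}(x^{k+1},x^k,x)]$ exactly'' is not a meaningful identity (the latter also contains the separable $\eta_k$ part and terms quadratic in $x^{k+1}$). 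The fix is to never split these terms at all: the target inequality \eqref{eq:sum-bd1} keeps $\EE[\Delta_{\eta_k I-\beta_k A^\top A}(x^{k+1},x^k,x)]$ and $\EE\|r^{k+1}\|^2$ as unexpanded expectations of realization-wise quantities, so you should apply the residual identity pointwise, inside the expectation, exactly as the paper does --- the virtual full update $\tilde x^{k+1}$ and the formula for $\EE[r^{k+1}]$ are then never needed (for the separable pieces your $\tilde x^{k+1}$ device is a harmless reparametrization of the paper's direct computation of $\EE_{S_k}$). With that correction your argument is the paper's argument.
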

When $\mu=0$ (i.e., \eqref{eq:mb-prob} is convex), Algorithm \ref{alg:arpdc} has $O(1/t)$ convergence rate with fixed $\beta_k,\rho_k,\eta_k$. This can be shown from \eqref{eq:sum-bd1}, and {a similar result in slightly different form} has been established in \cite[Theorem 3.6]{GXZ-RPDCU2016}.
For completeness, we provide its proof in the appendix. 
\begin{theorem}[Un-accelerated convergence]
\label{thm:naccl}
Under Assumptions \ref{assump:saddle-pt} and \ref{assump:lip-F}, let $\{(x^k,\lambda^k)\}$ be the sequence generated from Algorithm \ref{alg:arpdc} with $\beta_k=\beta,\rho_k=\rho,\eta_k=\eta$ for all $k$, satisfying
$$0<\rho\le \theta\beta,\quad \eta\ge L_m+\beta\|A\|_2^2,$$
where $\|A\|_2$ denotes the spectral norm of $A$. Then
\begin{subequations}
\begin{align}
&\big|\EE [F(\bar{x}^t)-F(x^*)]\big|\le \frac{1}{1+\theta (t-1)}\max_{\|\lambda\|\le \gamma}\phi_2(x^*,\lambda),\\
&\EE\|A\bar{x}^t-b\|\le \frac{1}{(1+\theta (t-1))\max\{1, \|\lambda^*\|\} }\max_{\|\lambda\|\le \gamma}\phi_2(x^*,\lambda),
\end{align}
\end{subequations}
where $(x^*,\lambda^*)$ satisfies the KKT conditions in \eqref{eq:kkt-conds}, $\gamma=\max\{\|2\lambda^*\|, 1+\|\lambda^*\|\}$, and
$$\bar{x}^t=\frac{x^{t+1}+\theta\sum_{k=2}^t x^k}{1+\theta (t-1)},\quad \phi_2(x,\lambda)=(1-\theta)\left(F(x^1)-F(x)\right)+\frac{\eta}{2}\|x^1-x\|^2+\frac{\theta\|\lambda\|^2}{2\rho}.$$
\end{theorem}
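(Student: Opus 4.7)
The plan is to derive the $O(1/t)$ rate directly from the one-iteration bound \eqref{eq:sum-bd1} specialized to the convex case $\mu=0$ (which is consistent with dropping Assumption \ref{assump:str-cvx-F}), then to telescope over $k=1,\ldots,t$, and finally to convert the weighted ergodic bound into the stated objective and feasibility guarantees via Lemmas \ref{lem:xy-rate} and \ref{lem:equiv-rate}. First, setting $W:=\eta I-\beta A^\top A$, the assumption $\eta\ge L_m+\beta\|A\|_2^2$ gives $W\succeq L_m I$, and so the non-telescoping matrix-weighted term in \eqref{eq:sum-bd1} admits the pointwise lower bound
\[
\Delta_W(x^{k+1},x^k,x)-\tfrac{L_m}{2}\|x^{k+1}-x^k\|^2 \;\ge\; \tfrac{1}{2}\|x^{k+1}-x\|_W^2-\tfrac{1}{2}\|x^k-x\|_W^2,
\]
which converts that piece of \eqref{eq:sum-bd1} into a clean telescoping pair.

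Second, I would replace the running multiplier $\lambda^{k+1}$ appearing in $\Phi(x^{k+1},x,\lambda^{k+1})$ by an arbitrary target $\lambda$ through the identity
\[
\Phi(x^{k+1},x,\lambda^{k+1})=\Phi(x^{k+1},x,\lambda)+\langle\lambda-\lambda^{k+1},r^{k+1}\rangle,
\]
using the dual update $r^{k+1}=(\lambda^k-\lambda^{k+1})/\rho$ and the polarization identity \eqref{uv-cross} to rewrite the cross term as a telescoping difference of $\|\lambda-\lambda^\cdot\|^2$ plus a $\tfrac{\rho}{2}\|r^{k+1}\|^2$ remainder; the analogous identity is applied to $\Phi(x^k,x,\lambda^k)$ on the right side (with $\lambda^1=\vzero$ handling the boundary). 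Summing the resulting inequality over $k=1,\ldots,t$, the hypothesis $\rho\le\theta\beta$ is exactly what is needed so that, after collecting terms, every residual-norm coefficient $\|r^{k+1}\|^2$ is non-negative and may be dropped, and likewise for the terminal $\|\lambda-\lambda^{t+1}\|^2$ and $\|x^{t+1}-x\|_W^2$. Bounding $\tfrac{1}{2}\|x^1-x\|_W^2\le\tfrac{\eta}{2}\|x^1-x\|^2$ and using $\lambda^1=\vzero$, what remains is precisely
\[
\EE\Phi(x^{t+1},x,\lambda)+\theta\sum_{k=2}^t\EE\Phi(x^k,x,\lambda)\;\le\;(1-\theta)\bigl[F(x^1)-F(x)\bigr]+\tfrac{\eta}{2}\|x^1-x\|^2+\tfrac{\theta\|\lambda\|^2}{2\rho}=\phi_2(x,\lambda).
\]

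Third, convexity of $\Phi(\cdot,x,\lambda)$ in its first argument (which holds for any fixed $\lambda$ and any $x$ with $Ax=b$), together with Jensen's inequality applied to the weights defining $\bar x^t$, gives $(1+\theta(t-1))\EE\Phi(\bar x^t,x,\lambda)\le\phi_2(x,\lambda)$. Specializing to $x=x^*$ and invoking Lemma \ref{lem:xy-rate} with $\gamma=\max\{2\|\lambda^*\|,1+\|\lambda^*\|\}$ yields a bound on $\EE[F(\bar x^t)-F(x^*)+\gamma\|A\bar x^t-b\|]$ by $\max_{\|\lambda\|\le\gamma}\phi_2(x^*,\lambda)/(1+\theta(t-1))$; then Lemma \ref{lem:equiv-rate}, combined with the elementary case analysis $\gamma-\|\lambda^*\|=\max\{1,\|\lambda^*\|\}$ built into the choice of $\gamma$, converts this into the two claimed inequalities.

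The main technical obstacle is the careful bookkeeping in step two: the multiplier conversion introduces interleaved $\|\lambda-\lambda^k\|^2$ and $\rho\|r^k\|^2$ contributions on both sides of the one-iteration inequality, and one must verify that, upon telescoping, the $\|\lambda-\lambda^k\|^2$ terms collapse to leave exactly the coefficient $\theta/(2\rho)$ on $\|\lambda\|^2$ and that every accumulated residual coefficient is non-negative — it is here that the precise parameter range $\rho\le\theta\beta$ becomes indispensable, whereas $\eta\ge L_m+\beta\|A\|_2^2$ plays its separate role in step one.
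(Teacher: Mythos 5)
Your proposal is correct and follows essentially the same route as the paper: lower-bound the $\Delta_{\eta I-\beta A^\top A}$ term by a telescoping difference using $\eta\ge L_m+\beta\|A\|_2^2$, telescope \eqref{eq:sum-bd1} with $\mu=0$ and use $\rho\le\theta\beta$ to drop the accumulated residual terms, convert the running multipliers to a fixed $\lambda$ via the dual update and \eqref{uv-cross}, and finish with Jensen plus Lemmas \ref{lem:xy-rate} and \ref{lem:equiv-rate}; the only organizational difference is that you convert $\lambda^{k+1}\to\lambda$ iteration by iteration before summing, whereas the paper sums first and converts afterwards. One caution on your step two: after telescoping, the terminal dual contribution is the difference $\frac{1}{2\rho}\|\lambda-\lambda^{t+1}\|^2-\frac{1-\theta}{2\rho}\|\lambda-\lambda^{t}\|^2$, which is not sign-definite and cannot simply be ``dropped'' as your sketch suggests; it must be absorbed against the leftover $\beta\|r^{t+1}\|^2$ via Young's inequality (again using $\rho\le\theta\beta$), which is precisely the step the paper performs after \eqref{eq:sum-bd1-prf3}.
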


When $F$ is strongly convex, the above $O(1/t)$ convergence rate can be accelerated to $O(1/t^2)$ by adaptively changing the parameters at each iteration.
The following theorem is our main result. It shows an $O(1/t^2)$ convergence result under certain conditions on the parameters. Based on this theorem, we will give a set of parameters that satisfy these conditions, thus providing a specific scheme to choose the paramenters.

\begin{theorem}\label{thm:rate0}
Under Assumptions \ref{assump:str-cvx-F} and \ref{assump:lip-F}, let $\{(x^k,\lambda^k)\}$ be the sequence generated from Algorithm \ref{alg:arpdc} with parameters satisfying the following conditions for a certain number $k_0$:
\begin{subequations}\label{eq:para-conds}
\begin{eqnarray}
\theta(k+k_0+1)&\ge& 1,\,\forall k\ge 2,\label{eq:cond1}\\
(\beta_{k-1}-\rho_{k-1})(k+k_0)&\ge & (1-\theta)(k+k_0+1)\beta_k,\,\forall 2\le k \le t,\label{eq:cond2}\\
\frac{\theta(k+k_0+1)-1}{\rho_{k-1}}&\ge&\frac{\theta(k+k_0+2)-1}{\rho_k},\,\forall\, 2\le k\le t-1,\label{eq:cond3}\\
\frac{\theta(t+k_0+1)-1}{\rho_{t-1}}&\ge&\frac{t+k_0+1}{\rho_t},\,\label{eq:cond4}\\
\beta_k(k+k_0+1)&\ge &\beta_{k-1}(k+k_0),\,\forall k\ge 2,\label{eq:cond5}\\
(k+k_0+1)(\eta_k-L_m)I&\succeq &\beta_k(k+k_0+1)A^\top A,\,\forall k\ge 1,\label{eq:cond6}\\
(k+k_0)\eta_{k-1}+\mu\big(\theta(k+k_0+1)-1\big)&\ge & (k+k_0+1)\eta_k,\,\forall k\ge 2.\label{eq:cond7}
\end{eqnarray}
\end{subequations}
Then for any $(x,\lambda)$ such that $Ax=b$, we have
\begin{eqnarray}
& & (t+k_0+1) \EE\Phi(x^{t+1},x,\lambda)+\sum_{k=2}^t\big(\theta(k+k_0+1)-1\big)\EE\Phi(x^k,x,\lambda)\nonumber\\
&\le & (1-\theta)(k_0+2)\EE\left[\Phi(x^1,x,\lambda^1)+\beta_1\| r^1\|^2+\frac{\mu}{2}\|x^1-x\|^2\right]+\frac{\eta_1(k_0+2)}{2}\EE\|x^1-x\|^2 \nonumber \\
& &+\frac{\theta(k_0+3)-1}{2\rho_1}\EE\|\lambda^1-\lambda\|^2-\frac{t+k_0+1}{2}\EE\|x^{t+1}-x\|_{(\mu+\eta_t) I-\beta_t A^\top A}^2 . \label{eq:sum-bd4}
\end{eqnarray}
\end{theorem}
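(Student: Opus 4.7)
The plan is to start from the per-iteration estimate in Lemma \ref{lem:1iter}, convert it to involve a fixed dual variable $\lambda$, then multiply by the weight $w_k := k+k_0+1$ and sum from $k=1$ to $t$, using conditions \eqref{eq:cond1}--\eqref{eq:cond7} as bookkeeping rules for a chain of telescopings and absorptions.

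First I would rewrite $\Phi(x^{k+1},x,\lambda^{k+1}) = \Phi(x^{k+1},x,\lambda) - \langle \lambda^{k+1}-\lambda, r^{k+1}\rangle$, and then use the multiplier update $\lambda^{k+1}-\lambda = (\lambda^k-\lambda) - \rho_k r^{k+1}$ together with the polarization identity \eqref{uv-cross} to express
\[
-\langle \lambda^{k+1}-\lambda, r^{k+1}\rangle = \frac{1}{2\rho_k}\bigl(\|\lambda^k-\lambda\|^2-\|\lambda^{k+1}-\lambda\|^2\bigr) + \frac{\rho_k}{2}\|r^{k+1}\|^2.
\]
In parallel I would expand the $\Delta_{\eta_k I-\beta_k A^\top A}$ term by definition \eqref{eq:def-Delta} and absorb the $\frac{L_m}{2}\|x^{k+1}-x^k\|^2$ term into the resulting $\|x^{k+1}-x^k\|^2_{M_k}$ quadratic, where $M_k=\eta_k I-\beta_k A^\top A$. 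After weighting by $w_k$, the surviving $\|x^{k+1}-x^k\|^2$ coefficient is $\tfrac{w_k}{2}\bigl((\eta_k-L_m)I-\beta_k A^\top A\bigr)$, which is PSD by \eqref{eq:cond6} and so is dropped. Collecting the strong-convexity term $\tfrac{\mu}{2}\|x^{k+1}-x\|^2$ from the lemma with the expanded $\Delta$ packages the $x$-quadratic on the left into weight $(\mu+\eta_k)I-\beta_k A^\top A$, matching the form that eventually appears in \eqref{eq:sum-bd4}.

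Next I would multiply by $w_k=k+k_0+1$, take expectation, and sum over $k=1,\ldots,t$. The key identity for the $\Phi$ recursion is that reindexing the left side by $j=k+1$ gives $w_{j-1}-(1-\theta)w_j=\theta(j+k_0+1)-1$, which by \eqref{eq:cond1} is nonnegative and produces exactly the middle sum in \eqref{eq:sum-bd4}; the final unmatched left-side contribution is $w_t\Phi(x^{t+1},x,\lambda)$, yielding the $(t+k_0+1)$ coefficient. Each remaining condition then corresponds to one absorption:
\begin{itemize}
\item \eqref{eq:cond2} and \eqref{eq:cond5} together match $w_{k-1}(\beta_{k-1}-\rho_{k-1})\|r^k\|^2$ against the $(1-\theta)w_k\beta_k\|r^k\|^2$ from the RHS of step $k$, after accounting for the extra $\tfrac{w_{k-1}\rho_{k-1}}{2}\|r^k\|^2$ produced by the conversion in step 1.
\item \eqref{eq:cond3}--\eqref{eq:cond4} telescope $\tfrac{w_k}{2\rho_k}\|\lambda^{k+1}-\lambda\|^2$ across consecutive iterations, with the boundary inequality at $k=t$ tuned so that the leftover coefficient on $\|\lambda^1-\lambda\|^2$ matches $\tfrac{\theta(k_0+3)-1}{2\rho_1}$.
\item \eqref{eq:cond7} is the workhorse of acceleration: it asks the $w_{k-1}\eta_{k-1}$ contribution from the left of step $k-1$ plus the strong-convexity gain $\mu(\theta(k+k_0+1)-1)$ coming from the $\Phi$ telescoping to dominate $w_k\eta_k$ on the right of step $k$, ensuring that the $\|x^k-x\|^2_{\eta_{k-1}I-\beta_{k-1}A^\top A}$ terms cancel while leaving only the negative boundary quadratic $-\tfrac{t+k_0+1}{2}\|x^{t+1}-x\|^2_{(\mu+\eta_t)I-\beta_t A^\top A}$ at the final step and the $\tfrac{\eta_1(k_0+2)}{2}\|x^1-x\|^2$ datum at the initial step.
\end{itemize}

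The hard part will be the simultaneous bookkeeping: each of the seven conditions encodes a single index-shifted inequality, and a misalignment between $w_k$ and $w_{k-1}$ in any one of the four coupled recursions ($\Phi$, $\|r\|^2$, $\|\lambda-\cdot\|^2$, $\|x-\cdot\|^2$) would break the cancellation. The subtlest coupling is that converting $\Phi(x^{k+1},x,\lambda^{k+1})$ to $\Phi(x^{k+1},x,\lambda)$ injects an extra $\tfrac{\rho_k}{2}\|r^{k+1}\|^2$ term, which is precisely why \eqref{eq:cond2} features the combination $\beta_{k-1}-\rho_{k-1}$ rather than $\beta_{k-1}$ alone, and why \eqref{eq:cond5} is needed as a separate monotonicity requirement on $\beta_k w_k$. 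Once those absorptions are verified, the only unmatched pieces are the $k=1$ initial data (which produces the three terms on the right of \eqref{eq:sum-bd4} with prefactor $k_0+2$) and the $k=t$ boundary quadratic, yielding \eqref{eq:sum-bd4} exactly.
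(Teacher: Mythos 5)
Your route is the same as the paper's: start from Lemma \ref{lem:1iter}, weight by $k+k_0+1$, sum, and let each of the seven conditions absorb one index-shifted comparison; the reindexing identity $w_{j-1}-(1-\theta)w_j=\theta(j+k_0+1)-1$ and the roles you assign to \eqref{eq:cond1}, \eqref{eq:cond2}, \eqref{eq:cond3}--\eqref{eq:cond4}, \eqref{eq:cond6} and \eqref{eq:cond7} all match the paper (which packages the $x$-bookkeeping and $\lambda$-bookkeeping into Propositions \ref{prop2} and \ref{prop3} and performs the $\lambda^{k}\to\lambda$ conversion only at the very end rather than per iteration, but that is only a difference of organization).

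Two points need fixing. First, your displayed identity has the telescoping difference reversed: since $\lambda^{k+1}=\lambda^k-\rho_k r^{k+1}$, the correct statement is
\[
-\langle \lambda^{k+1}-\lambda, r^{k+1}\rangle \;=\; \frac{1}{\rho_k}\Delta(\lambda^{k+1},\lambda^k,\lambda)\;=\;\frac{1}{2\rho_k}\bigl(\|\lambda^{k+1}-\lambda\|^2-\|\lambda^{k}-\lambda\|^2\bigr)+\frac{\rho_k}{2}\|r^{k+1}\|^2,
\]
not $\frac{1}{2\rho_k}(\|\lambda^k-\lambda\|^2-\|\lambda^{k+1}-\lambda\|^2)+\frac{\rho_k}{2}\|r^{k+1}\|^2$. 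As written, your version flips the orientation of the $\lambda$-telescoping, so the monotonicity conditions \eqref{eq:cond3}--\eqref{eq:cond4} would point the wrong way and the positive boundary term $\frac{\theta(k_0+3)-1}{2\rho_1}\|\lambda^1-\lambda\|^2$ would not emerge on the right-hand side; with the correct sign the argument goes through exactly as you describe. Second, you assign \eqref{eq:cond5} to the residual matching, but \eqref{eq:cond2} alone handles $w_{k-1}(\beta_{k-1}-\rho_{k-1})\|r^k\|^2$ versus $(1-\theta)w_k\beta_k\|r^k\|^2$ (the extra $\frac{\rho_{k-1}}{2}\|r^k\|^2$ from the conversion only helps and can be discarded). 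The actual job of \eqref{eq:cond5} is in the $x$-quadratic telescoping: the cross-iteration weight of $\frac12\|x^k-x\|^2$ contains the piece $\bigl[\beta_{k-1}w_{k-1}-\beta_k w_k\bigr]A^\top A$, and \eqref{eq:cond5} is precisely what makes this negative semidefinite, complementing \eqref{eq:cond7}, which handles the $I$-part. With these two corrections your plan reproduces the paper's proof.
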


Specifying the parameters that satisfy \eqref{eq:para-conds}, we show $O(1/t^2)$ convergence rate of Algorithm \ref{alg:arpdc}.

\begin{proposition}\label{prop:param-cond}
The following parameters satisfy all conditions in \eqref{eq:para-conds}:
\begin{subequations}\label{eq:paras}
\begin{align}
&\beta_k= \frac{\mu(\theta k+2+\theta)}{2\rho\|A\|_2^2},\,\forall k\ge 1,\label{eq:paras-beta}\\
&\rho_k=\left\{
\begin{array}{ll}
\frac{\theta \beta_k}{(6-5\theta)}, & \text{ for }1\le k\le t-1,\\[0.2cm]
\frac{(t+k_0+1)\rho_{t-1}}{\theta(t+k_0+1)-1}, &\text{ for }k=t
\end{array}
\right.\label{eq:paras-rho}\\
&\eta_k=\rho\beta_k\|A\|_2^2+L_m,\,\forall k\ge 1,\label{eq:paras-eta}
\end{align}
\end{subequations}
where $\rho\ge 1$ and
\begin{equation}
k_0=\frac{4}{\theta}+\frac{2L_m}{\theta \mu}.\label{eq:paras-k0}
\end{equation}
\end{proposition}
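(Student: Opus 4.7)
The plan is to verify the seven conditions in \eqref{eq:para-conds} one by one under the given parameter choices, grouping them into three tiers by difficulty.

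First I would dispose of the conditions that fall out essentially from the definitions. Condition \eqref{eq:cond1} is trivial: since $k\ge 2$ and $k_0\ge 4/\theta$, one gets $\theta(k+k_0+1)\ge 3\theta+4\ge 1$. Condition \eqref{eq:cond6} reduces, after substituting $\eta_k-L_m=\rho\beta_k\|A\|_2^2$, to $\rho\|A\|_2^2 I\succeq A^\top A$, which holds since $\rho\ge 1$. Condition \eqref{eq:cond5} becomes $(k+k_0+1)(\theta k+2+\theta)\ge (k+k_0)(\theta k+2)$, and both factors on the left exceed the corresponding ones on the right. Condition \eqref{eq:cond4} holds with equality by the very definition of $\rho_t$ in \eqref{eq:paras-rho}.

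Next I would tackle \eqref{eq:cond3} and \eqref{eq:cond7}. For \eqref{eq:cond3}, use $\rho_k/\rho_{k-1}=\beta_k/\beta_{k-1}=(\theta k+2+\theta)/(\theta k+2)$ in the range $2\le k\le t-1$, clear denominators, and after expansion the difference LHS$-$RHS collapses to $\theta\bigl(\theta(1+k_0)-1\bigr)$, which is nonnegative because $\theta k_0\ge 4$. For \eqml{eq:cond7}, I would first compute $\eta_k-\eta_{k-1}=\mu\theta/2$ explicitly from \eqref{eq:paras-eta} and \eqref{eq:paras-beta}, then expand
\[
(k+k_0+1)\eta_k-(k+k_0)\eta_{k-1}=L_m+\tfrac{\mu}{2}\bigl[2\theta k+\theta k_0+\theta+2\bigr].
\]
Subtracting this from $\mu\bigl(\theta(k+k_0+1)-1\bigr)$ and simplifying reduces the desired inequality to $\theta k_0\ge 2L_m/\mu+4-\theta$. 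This is precisely what forced the choice $k_0=4/\theta+2L_m/(\theta\mu)$ in \eqref{eq:paras-k0}; that step is the whole reason for the particular formula for $k_0$.

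The main obstacle will be \eqref{eq:cond2}, which does not trivially telescope. I would first compute $\beta_{k-1}-\rho_{k-1}=\beta_{k-1}\cdot\frac{6-6\theta}{6-5\theta}$ from \eqref{eq:paras-rho}. When $\theta=1$ both sides of \eqref{eq:cond2} vanish; for $\theta<1$, dividing through by $(1-\theta)$ reduces the claim to
\[
6(k+k_0)(\theta k+2)\ge (6-5\theta)(k+k_0+1)(\theta k+2+\theta),\qquad 2\le k\le t.
\]
Writing $u=k+k_0$ and expanding, the difference LHS$-$RHS collapses to $u\theta(5\theta k+4+5\theta)-(6-5\theta)(\theta k+2+\theta)$, which is linear in $u$. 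Using $u\ge k+4/\theta$ from \eqref{eq:paras-k0}, a short bound shows this is always nonnegative, with the tightest case $k=2$ yielding an explicit positive lower bound. With this final piece in hand, all seven conditions of \eqref{eq:para-conds} are verified and the proposition follows.
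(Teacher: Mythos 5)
Your proof is correct and follows essentially the same route as the paper's: direct substitution of the parameter formulas into each of the seven conditions followed by elementary algebra, with the same key reductions (\eqref{eq:cond7} forcing the choice of $k_0$, \eqref{eq:cond4} holding by construction, \eqref{eq:cond2} being the only condition needing real work). One cosmetic slip: for \eqref{eq:cond3} the difference of the cross-multiplied sides is $\theta\bigl(\theta(k_0+1)-3\bigr)$, not $\theta\bigl(\theta(k_0+1)-1\bigr)$, but your justification via $\theta k_0\ge 4$ covers the correct expression as well.
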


\begin{theorem}[Accelerated convergence]\label{thm:rate1}
Under Assumptions \ref{assump:saddle-pt}, \ref{assump:str-cvx-F} and \ref{assump:lip-F}, let $\{(x^k,\lambda^k)\}$ be the sequence generated from Algorithm \ref{alg:arpdc} with parameters taken as in \eqref{eq:paras}.
 Then
\begin{equation}\label{eq:rate-obj-feas}
\big|\EE[F(\bar{x}^{t+1})-F(x^*)]\big|\le \frac{1}{T}\max_{\|\lambda\|\le \gamma}\phi_3(x^*,\lambda),\quad \EE\|A\bar{x}^{t+1}-b\|\le \frac{1}{T\max\{1,\|\lambda^*\|\}}\max_{\|\lambda\|\le \gamma}\phi_3(x^*,\lambda),
\end{equation}
where $\gamma=\max\{2\|\lambda^*\|, 1+\|\lambda^*\|\}$,
\begin{eqnarray*}
\bar{x}^{t+1} &=& \frac{(t+k_0+1)x^{t+1}+\sum_{k=2}^t\big(\theta(k+k_0+1)-1\big)x^k}{T}, \\
\phi_3(x, \lambda) &=&(1-\theta)(k_0+2)\left[F(x^1)-F(x)+\beta_1\|r^1\|^2+\frac{\mu}{2}\|x^1-x\|^2\right]  \\
& & 
+\frac{\eta_1(k_0+2)}{2}\|x^1-x\|^2+\frac{\theta(k_0+3)-1}{2\rho_1}\|\lambda\|^2
\end{eqnarray*}
and $$T= (t+k_0+1)+\sum_{k=2}^t\big(\theta(k+k_0+1)-1\big).$$
In addition,
$$\EE\|x^{t+1}-x^*\|^2\le \frac{2\phi_3(x^*,\lambda^*)}{(t+k_0+1)\left(\frac{(\rho-1)\mu}{2\rho}(\theta t + \theta + 2)+2\mu+L_m\right)}.$$
\end{theorem}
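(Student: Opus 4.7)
\proof

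The plan is to combine Theorem~\ref{thm:rate0} with the parameter choice verified in Proposition~\ref{prop:param-cond}, and then convert the resulting weighted-averaged inequality into the advertised bounds on objective error, feasibility residual, and distance to the primal optimum.

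First, since the parameters in \eqref{eq:paras} satisfy all the conditions \eqref{eq:para-conds} by Proposition~\ref{prop:param-cond}, I can plug them into \eqref{eq:sum-bd4}. By definition of $\phi_3$ and using $\lambda^1 = 0$, the first four terms on the right-hand side of \eqref{eq:sum-bd4} are bounded above by $\phi_3(x,\lambda)$ for any $(x,\lambda)$ with $Ax=b$. So, discarding the nonpositive term $-\frac{t+k_0+1}{2}\EE\|x^{t+1}-x\|_{(\mu+\eta_t)I-\beta_t A^\top A}^2$ whenever $(\mu+\eta_t)I \succeq \beta_t A^\top A$ (which I will verify from \eqref{eq:paras-eta} since $\eta_t - L_m = \rho\beta_t\|A\|_2^2 \ge \beta_t\|A\|_2^2$), I obtain
\[
(t+k_0+1)\EE\Phi(x^{t+1},x,\lambda) + \sum_{k=2}^t\bigl(\theta(k+k_0+1)-1\bigr)\EE\Phi(x^k,x,\lambda) \le \phi_3(x,\lambda).
\]

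Next I use convexity of $F$ together with Jensen's inequality applied to the definition of $\bar{x}^{t+1}$. Since all weights $(t+k_0+1)$ and $\theta(k+k_0+1)-1$ (for $2\le k\le t$) are nonnegative by \eqref{eq:cond1}, and they sum to $T$, the convexity of $\Phi(\cdot, x,\lambda)$ in its first argument (which follows from the convexity of $F$, as $\Phi(\hat x,x,\lambda) = F(\hat x) - F(x) - \langle\lambda, A\hat x-b\rangle$ is linear in $A\hat x$ and convex in $\hat x$ through $F$) yields
\[
T\,\EE\Phi(\bar{x}^{t+1},x,\lambda) \le \phi_3(x,\lambda).
\]
Setting $x=x^*$ and recalling $\Phi(\bar{x}^{t+1},x^*,\lambda) = F(\bar{x}^{t+1}) - F(x^*) - \langle\lambda, A\bar{x}^{t+1}-b\rangle$, Lemma~\ref{lem:xy-rate} (with $\phi(\lambda)=\phi_3(x^*,\lambda)/T$ and $\gamma$ as stated) gives
\[
\EE\bigl[F(\bar{x}^{t+1})-F(x^*)+\gamma\|A\bar{x}^{t+1}-b\|\bigr] \le \frac{1}{T}\max_{\|\lambda\|\le \gamma}\phi_3(x^*,\lambda).
\]
The two estimates in \eqref{eq:rate-obj-feas} then follow by applying Lemma~\ref{lem:equiv-rate} with the choice of $\gamma$ guaranteeing $\gamma - \|\lambda^*\| \ge \max\{1,\|\lambda^*\|\}$.

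For the final bound on $\EE\|x^{t+1}-x^*\|^2$, I return to \eqref{eq:sum-bd4} and this time retain the last term. Setting $(x,\lambda)=(x^*,\lambda^*)$ and using \eqref{eq:1stopt-cond} to drop all the nonnegative $\Phi$-terms on the left, I obtain
\[
\frac{t+k_0+1}{2}\EE\|x^{t+1}-x^*\|_{(\mu+\eta_t)I-\beta_t A^\top A}^2 \le \phi_3(x^*,\lambda^*).
\]
It remains to give a scalar lower bound on the smallest eigenvalue of $(\mu+\eta_t)I-\beta_t A^\top A$. From \eqref{eq:paras-eta}, $\eta_t - L_m = \rho\beta_t\|A\|_2^2$, so
\[
(\mu+\eta_t)I - \beta_t A^\top A \succeq \bigl(\mu + L_m + (\rho-1)\beta_t\|A\|_2^2\bigr)I.
\]
Substituting $\beta_t = \mu(\theta t + 2+\theta)/(2\rho\|A\|_2^2)$ from \eqref{eq:paras-beta} yields the lower bound $\frac{(\rho-1)\mu}{2\rho}(\theta t+\theta+2) + \mu + L_m$, and combining with the displayed inequality (and absorbing $\mu$ into the parenthesis via $\mu + L_m \le 2\mu + L_m$) produces the stated estimate.

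The main obstacle in this plan is ensuring the convexity-based aggregation step is justified, namely that the coefficients used to form $\bar x^{t+1}$ match those appearing on the left of \eqref{eq:sum-bd4} and that their sum equals $T$; this is essentially bookkeeping but must be done carefully since the weight attached to $x^{t+1}$ is $(t+k_0+1)$, different from the generic weight $\theta(k+k_0+1)-1$ used for $2\le k\le t$. The condition \eqref{eq:cond4} in Theorem~\ref{thm:rate0} is precisely what permits this different weighting at the last step, and the parameter choice \eqref{eq:paras-rho} for $\rho_t$ is engineered to satisfy it with equality.

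\endproof
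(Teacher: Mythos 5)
Your derivation of the two ergodic bounds in \eqref{eq:rate-obj-feas} is correct and follows essentially the same route as the paper: invoke Proposition \ref{prop:param-cond} to activate Theorem \ref{thm:rate0}, drop the final nonpositive quadratic term, apply convexity of $F$ (Jensen) with the nonnegative weights summing to $T$, and finish with Lemmas \ref{lem:xy-rate} and \ref{lem:equiv-rate}.

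The last step, however, contains a genuine error. After dropping \emph{all} the $\Phi$-terms via \eqref{eq:1stopt-cond} you arrive at
\[
\frac{t+k_0+1}{2}\Big(\frac{(\rho-1)\mu}{2\rho}(\theta t+\theta+2)+\mu+L_m\Big)\EE\|x^{t+1}-x^*\|^2\le\phi_3(x^*,\lambda^*),
\]
whose coefficient carries $\mu+L_m$, not the $2\mu+L_m$ claimed in the theorem. The observation $\mu+L_m\le 2\mu+L_m$ goes the wrong way for ``absorbing'': a smaller coefficient on the left yields a \emph{larger} upper bound on $\EE\|x^{t+1}-x^*\|^2$, so your estimate is strictly weaker than the stated one and does not imply it. The missing ingredient is that $\Phi(x^{t+1},x^*,\lambda^*)$ should not be discarded as merely nonnegative: since $A^\top\lambda^*\in\partial F(x^*)$ and $F$ is strongly convex with modulus $\mu$, one has $\Phi(x^{t+1},x^*,\lambda^*)\ge\frac{\mu}{2}\|x^{t+1}-x^*\|^2$. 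Retaining the term $(t+k_0+1)\EE\Phi(x^{t+1},x^*,\lambda^*)$ on the left and lower-bounding it this way contributes an extra $(t+k_0+1)\frac{\mu}{2}\EE\|x^{t+1}-x^*\|^2$, which combines with the quadratic term to give exactly $\frac{t+k_0+1}{2}\big(\frac{(\rho-1)\mu}{2\rho}(\theta t+\theta+2)+2\mu+L_m\big)\EE\|x^{t+1}-x^*\|^2$ on the left-hand side. This is precisely how the paper closes the argument; with that correction your proof is complete.
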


\section{Linearly convergent primal-dual method}
\label{sec:linear}

In this section, we assume some more structure on \eqref{eq:mb-prob} and show that a linear rate of convergence is possible.  
If there is no linear constraint, Algorithm \ref{alg:arpdc} reduces to the RCD method proposed in \cite{nesterov2012rcd}. It is well-known that RCD converges linearly if the objective is strongly convex. However, with the presence of linear constraints, mere strong convexity of the objective of the primal problem only ensures the smoothness 
of its Lagrangian dual function, but not its strong concavity. Hence, in general, we do not expect linear convergence by only assuming strong convexity on the primal objective function. To ensure linear convergence on both the primal and dual variables, we need additional assumptions.

Throughout this section, we suppose that there is at least one block variable being absent in the nonseparable part of the objective, namely $f$. For convenience, we rename this block variable to be $y$, and the corresponding component function and constraint coefficient matrix as $h$ and $B$. Specifically, we consider the following problem
\begin{equation}\label{eq:mb-prob-y}
\min_{x,y} f(x_1,\ldots,x_M)+\sum_{i=1}^M g_i(x_i)+h(y), \st \sum_{i=1}^M A_ix_i+By=b.
\end{equation}

{One example of \eqref{eq:mb-prob-y} is the problem that appears while computing a point on the central path of a convex program. Suppose we are interested in solving
\begin{equation}\label{eq:genral-cp}
\min_x f(x_1,\ldots,x_M), \st \sum_{i=1}^M A_ix_i \le b, \, x_i\ge 0, i = 1,\ldots, M.
\end{equation}
Let $y= b - \sum_{i=1}^M A_ix_i$ and use the log-barrier function. We have the log-barrier approximation of \eqref{eq:genral-cp} as follows:
\begin{equation}\label{eq:genral-cp-logbar}
\min_{x, y} f(x_1,\ldots,x_M) - \mu \sum_{i=1}^M e^\top \log x_i - \mu e^\top \log y, \st \sum_{i=1}^M A_ix_i + y = b,
\end{equation}
where $e$ is the all-one vector. As $\mu$ decreases, the approximation becomes more accurate. 
}

Towards a solution to \eqref{eq:mb-prob-y}, we modify Algorithm \ref{alg:arpdc} by updating $y$-variable after the $x$-update. Since there is only a single $y$-block, to balance $x$ and $y$ updates, we do not renew $y$ in every iteration but instead update it in probability $\theta=\frac{m}{M}$. Hence, roughly speaking, $x$ and $y$ variables are updated in the same frequency. The method is summarized in Algorithm \ref{alg:rpdc-lin}.

\begin{algorithm}[h]\caption{Randomized primal-dual block coordinate update for \eqref{eq:mb-prob-y}}\label{alg:rpdc-lin}
\DontPrintSemicolon
\textbf{Initialization:} choose $(x^1,y^1)$, set $\lambda^1=0$, and choose parameters $\beta,\rho,\eta_x,\eta_y, m$. \;
Let $r^1=Ax^1+By^1-b$ and $\theta=\frac{m}{M}$. \;
\For{$k=1,2,\ldots$}{
Select index set $S_k\subset\{1,\ldots,M\}$ uniformly at random with $|S_k|=m$. \;
Keep $x_i^{k+1}=x_i^k,\,\forall i\not\in S_k$ and update
\begin{equation}\label{eq:update-x-lin}
x_i^{k+1}=\argmin\limits_{x_i}\left\langle \nabla_i f(x^k)-A_i^\top(\lambda^k-\beta r^k), x_i\right\rangle+g_i(x_i)+\frac{\eta_x}{2}\|x_i-x_i^k\|^2, \text{ if } i\in S_k.
\end{equation}
Let $r^{k+\frac{1}{2}}=r^k+\sum_{i\in S_k}A_i(x_i^{k+1}-x_i^k)$. \;
In probability $1-\theta$ keep $y^{k+1}=y^k$, and in probability $\theta$ let $y^{k+1}=\tilde{y}^{k+1}$, where
\begin{equation}\label{class-y-update}
 \tilde{y}^{k+1}=\argmin_y h(y)-\left\langle B^\top(\lambda^k-\beta r^{k+\frac{1}{2}}), y\right\rangle+\frac{\eta_y}{2}\|y-y^k\|^2.
\end{equation}
Let $r^{k+1}=r^{k+\frac{1}{2}}+B(y^{k+1}-y^k)$. \;
Update the multiplier by
\begin{equation}\label{eq:update-lam-y}
\lambda^{k+1}=\lambda^k-\rho r^{k+1} .
\end{equation}
\If{a certain stopping criterion is satisfied}{
Return $(x^{k+1},y^{k+1},\lambda^{k+1})$.
}
}
\end{algorithm}

\subsection{Technical assumptions}
In this section, we denote $z=(x,y,\lambda)$. Assume $h$ is differentiable. Similar to \eqref{eq:kkt-conds}, a point $z^*=( x^*, y^*,\vlam^*)$ is called a KKT point of \eqref{eq:mb-prob-y} if
\begin{subequations}\label{kkt}
\begin{align}
&0\in\partial F( x^*)- A^\top\vlam^*,\label{kkt1}\\
&\nabla h( y^*)- B^\top\vlam^*=0,\label{kkt2}\\
& A x^*+ B y^*- b=0\label{kkt3}.
\end{align}
\end{subequations}
Besides Assumptions \ref{assump:str-cvx-F} and \ref{assump:lip-F}, we make two additional assumptions as follows.
\begin{assumption}\label{assump:saddle-pt-lin}
There exists $z^*=(x^*,y^*,\lambda^*)$ satisfying the KKT conditions in \eqref{kkt}.
\end{assumption}

\begin{assumption}\label{assump:str-cvx-h}
The function $h$ is strongly convex with modulus $\nu$, and its gradient $\nabla h$ is Lipschitz continuous with constant $L_h$.
\end{assumption}

The strong convexity of $F$ and $h$ implies
\begin{subequations}\label{scvx-Fh}
\begin{eqnarray}
F(x^{k+1})-F(x^*)-\langle\tilde{\nabla}F(x^*),x^{k+1}-x^*\rangle &\ge &\, \frac{\mu}{2}\|\vx^{k+1}-\vx^*\|^2,\label{scvx-F}\\
\langle \vy^{k+1}-\vy^*, \nabla h(\vy^{k+1})-\nabla h(\vy^*)\rangle &\ge &\, \nu\|\vy^{k+1}-\vy^*\|^2.\label{scvx-h}
\end{eqnarray}
\end{subequations}


\subsection{Convergence analysis}
Similar to Lemma \ref{lem:1iter}, we first establish a result of running one iteration of Algorithm \ref{alg:rpdc-lin}. It can be proven by similar arguments to those showing Lemma \ref{lem:1iter}.
\begin{lemma}[One iteration analysis]\label{lem:linear-1step}
Under Assumptions \ref{assump:str-cvx-F}, \ref{assump:lip-F}, and \ref{assump:str-cvx-h}, let $\{( x^k, y^k,\vlam^k)\}$ be the sequence generated from Algorithm \ref{alg:rpdc-lin}. Then for any $k$ and $(x,y,\lambda)$ such that $Ax+By=b$, it holds
\begin{eqnarray}
& & \EE\varphi(z^{k+1},z)+( \beta -\rho)\EE \|r^{k+1}\|^2+\frac{1}{\rho}\EE\Delta(\lambda^{k+1},\lambda^k,\lambda)\nonumber\\
& &+\EE\left[\Delta_P(x^{k+1},x^k,x)-\frac{L_m}{2}\|x^{k+1}-x^k\|^2\right]+\EE\Delta_Q(y^{k+1},y^k,y)\nonumber\\
&\le &
(1-\theta)\EE\varphi(z^k,z)+\beta(1 -\theta)\EE \|r^k\|^2+\frac{1-\theta}{\rho}\EE\Delta(\lambda^k,\lambda^{k-1},\lambda) \nonumber \\
& & + \beta \EE\langle A(x^{k+1}- x),  B( y^{k+1}- y^k)\rangle+\beta(1 -\theta)\EE\langle B(y^k- y),A(x^{k+1}-x^k)\rangle. \label{lin-ineq1-1Y}
\end{eqnarray}
where $P=\eta_x I-\beta A^\top A$, $Q=\eta_y I-\beta B^\top B,$ and
\begin{equation}\label{eq:def-varphi}
\varphi(z^k,z)=F(x^k)-F(x)+\frac{\mu}{2}\|x^k-x\|^2+\big\langle y^k- y,{\nabla} h(y^k)\big\rangle-\big\langle \lambda, Ax^k+By^k-b\big\rangle.
\end{equation}
\end{lemma}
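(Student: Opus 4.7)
The plan is to mimic the proof of Lemma \ref{lem:1iter}, extended to accommodate the extra $y$-block and the two independent randomizations (the subset $S_k$ for the $x$-blocks and the Bernoulli trial that governs whether $y$ is refreshed). I would start by writing down the first-order optimality conditions for the subproblems \eqref{eq:update-x-lin} and \eqref{class-y-update}: for each $i\in S_k$ there is a subgradient $\tilde{\nabla}g_i(x_i^{k+1})$ with
\[
0=\tilde{\nabla}g_i(x_i^{k+1})+\nabla_i f(x^k)-A_i^\top(\lambda^k-\beta r^k)+\eta_x(x_i^{k+1}-x_i^k),
\]
and, on the event $\{y^{k+1}=\tilde{y}^{k+1}\}$,
\[
0=\nabla h(y^{k+1})-B^\top(\lambda^k-\beta r^{k+\frac{1}{2}})+\eta_y(y^{k+1}-y^k).
\]

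Next, I would take the inner product of the $x$-condition with $x_i^{k+1}-x_i$, sum over $i\in S_k$, and combine the result with the partial-Lipschitz descent estimate \eqref{eq:S-lip-ineq} for $f$, the strong convexity of $g$ (Assumption \ref{assump:str-cvx-F}), and the polarization identity \eqref{uv-cross} applied with the weight $\eta_x I-\beta A^\top A$. This yields a deterministic bound that controls $f(x^{k+1})+g(x^{k+1})-f(x)-g(x)$ (restricted to the selected coordinates), the quadratic $\tfrac{\mu}{2}\|x^{k+1}-x\|^2$ on those coordinates, and the linear term $-\langle\lambda^k-\beta r^k,A(x^{k+1}-x)\rangle$ (again restricted to $S_k$) in terms of $-\Delta_P(x^{k+1},x^k,x)+\tfrac{L_m}{2}\|x^{k+1}-x^k\|^2$. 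Conditioning on the history and averaging over $S_k$ (uniform of cardinality $m$) multiplies every selected-coordinate quantity by $\theta=m/M$ and leaves a $(1-\theta)$ residue attached to the previous-iterate values, which is exactly the $\varphi(z^{k+1},z)$ versus $(1-\theta)\varphi(z^k,z)$ separation visible in \eqref{lin-ineq1-1Y}. An analogous argument applied to the $y$-optimality condition, together with the strong-convexity inequality \eqref{scvx-h} for $h$, produces the $\Delta_Q(y^{k+1},y^k,y)$ contribution and the $\langle y^{k+1}-y,\nabla h(y^{k+1})\rangle$ piece of $\varphi$; the Bernoulli randomization for the $y$-update similarly generates the $\theta$/$(1-\theta)$ coefficients, and because it is independent of $S_k$ given the history, the two sources of randomness can be taken sequentially.

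Finally, the multiplier step \eqref{eq:update-lam-y} gives $\lambda^k-\lambda^{k+1}=\rho r^{k+1}$. Substituting this into the $-\langle\lambda,Ax^{k+1}+By^{k+1}-b\rangle$ piece of $\varphi(z^{k+1},z)$ and invoking \eqref{uv-cross} once more converts the $\lambda$-cross terms into $\tfrac{1}{\rho}\Delta(\lambda^{k+1},\lambda^k,\lambda)$ plus a $\beta\|r^{k+1}\|^2$ contribution; together with the $-\rho\|r^{k+1}\|^2$ coming from $\tfrac{1}{\rho}\|\lambda^k-\lambda^{k+1}\|^2$, this produces the $(\beta-\rho)\|r^{k+1}\|^2$ on the left side of \eqref{lin-ineq1-1Y}. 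The main obstacle is correctly bookkeeping the two cross terms $\beta\langle A(x^{k+1}-x),B(y^{k+1}-y^k)\rangle$ and $\beta(1-\theta)\langle B(y^k-y),A(x^{k+1}-x^k)\rangle$: they originate from expanding $r^{k+1}=r^{k+\frac{1}{2}}+B(y^{k+1}-y^k)$ and $r^{k+\frac{1}{2}}=r^k+\sum_{i\in S_k}A_i(x_i^{k+1}-x_i^k)$ inside the $-\beta\langle r^k,\cdot\rangle$ and $-\beta\langle r^{k+\frac{1}{2}},\cdot\rangle$ pieces of the two optimality inner products. The asymmetric coefficients---$1$ on the first cross term and $1-\theta$ on the second---reflect that the $y$-update already sees the fresh residual $r^{k+\frac{1}{2}}$ (after the $x$-step) while the $x$-update only sees the stale residual $r^k$ (which still contains $y^k$); pushing the Bernoulli conditioning through each term then recovers precisely the coefficients in \eqref{lin-ineq1-1Y}.
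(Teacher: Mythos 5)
Your proposal follows essentially the same route as the paper's proof: the paper packages your $x$-block computation into Lemma \ref{lem:S-basic-ineq} and otherwise proceeds exactly as you describe, with the Bernoulli expectation over the $y$-update producing the $(1-\theta)$ factors and the residual decompositions $r^k=r^{k+1}-A(x^{k+1}-x^k)-B(y^{k+1}-y^k)$ and $r^{k+\frac{1}{2}}=r^k+\sum_{i\in S_k}A_i(x_i^{k+1}-x_i^k)$ generating the two cross terms with exactly the coefficients you identify. One small note: \eqref{scvx-h} is not needed at this stage, since the lemma keeps $\langle y^{k+1}-y,\nabla h(y^{k+1})\rangle$ exactly inside $\varphi$ and the strong convexity of $h$ only enters later, in Theorem \ref{thm-pre}.
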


%

In the following, we let
\begin{equation}\label{eq:def-Psi}
\Psi(z^k,z^*)=F(x^k)-F(x^*)-\langle\tilde{\nabla}F(x^*),x^k-x^*\rangle+\big\langle y^k- y^*,\nabla h(y^k)-\nabla h(y^*)\big\rangle,
\end{equation}
and
\begin{eqnarray}
& & \psi(z^k,z^*;P,Q,\beta,\rho,c,\tau) \nonumber \\
&=& (1-\theta)\EE\Psi(z^k,z^*)+\frac{\beta(1 -\theta)}{2}\EE \|r^k\|^2+\frac{1}{2}\EE\|x^{k}- x^*\|^2_{P+\mu(1-\theta)I}+\frac{1}{2}\EE\|y^{k}- y^*\|^2_{Q+\frac{\beta(1 -\theta)}{\tau}B^\top B}\nonumber\\
& &
+\frac{1}{2\rho}\EE\left[\|\lambda^{k}-\lambda^*\|^2-(1-\theta)\|\lambda^{k-1}-\lambda^*\|^2+\frac{1}{\theta}\|\lambda^{k}-\lambda^{k-1}\|^2\right]. \label{eq:def-psi}
\end{eqnarray}

The following theorem is key 
to establishing linear convergence of Algorithm \ref{alg:rpdc-lin}.
\begin{theorem}\label{thm-pre}
Under Assumptions \ref{assump:str-cvx-F} through \ref{assump:str-cvx-h}, let $\{( x^k, y^k,\vlam^k)\}$ be the sequence generated from Algorithm \ref{alg:rpdc-lin} with $\rho=\theta\beta$. Let $0<\alpha<\theta$ and $\gamma=\max\left\{\frac{8\|A\|_2^2}{\alpha\mu},\frac{8\|B\|_2^2}{\alpha\nu}\right\}$. Choose $\delta,\kappa\ge0$ such that
\begin{equation}\label{eq:cond-kappa-del}
2\left[\begin{array}{cc}1-(1-\theta)(1+\delta) & (1-\theta)(1+\delta)\\(1-\theta)(1+\delta)& \kappa-(1-\theta)(1+\delta)\end{array}\right]\succeq \left[\begin{array}{cc}\theta & 1-\theta\\ 1-\theta & \frac{1}{\theta}-(1-\theta)\end{array}\right],
\end{equation}
and positive numbers $\eta_x,\eta_y,c,\tau_1,\tau_2,\beta$ such that
\begin{subequations}\label{eq:paras-lin}
\begin{eqnarray}
P &\succeq& \beta(1-\theta)\tau_2 A^\top A+L_m I\label{eq:choice-P-mat}\\
Q &\succeq& 8c Q^\top Q + 4c\rho^2(1-\theta)(1+\frac{1}{\delta})B^\top B B^\top B+\beta\tau_1 B^\top B.\label{eq:choice-Q-mat}
\end{eqnarray}
\end{subequations}
Then it holds that
\begin{eqnarray}
& &(1-\alpha)\EE\Psi(z^{k+1},z^*)+\frac{1}{2}\EE \|x^{k+1}- x^*\|^2_{P+(\frac{\alpha\mu}{2}+\mu)I-\frac{\beta}{\tau_1}A^\top A}+\frac{1}{2}\EE\|y^{k+1}- y^*\|^2_{Q+(\frac{3\alpha\nu}{2}-8cL_h^2)I}\nonumber\\
& &+\big(\frac{\beta-\rho}{2}+\frac{1}{\gamma}\big)\EE \|r^{k+1}\|^2-\left(c\rho^2\big(\kappa+2(1-\theta)(1+\frac{1}{\delta})\big)+2c(\beta-\rho)^2\right) \EE \|B^\top r^{k+1}\|^2\nonumber \\
& & +\left(\frac{1}{2\rho}+\frac{c}{2}\sigma_{\min}(BB^\top)\right)\EE\left[\|\lambda^{k+1}-\lambda^*\|^2
-(1-\theta)\|\lambda^{k}-\lambda^*\|^2+\frac{1}{\theta}\|\lambda^{k+1}-\lambda^k\|^2\right]\nonumber \\
& \le & \psi(z^k,z^*;P,Q,\beta,\rho,c,\tau_2). \label{lin-ineq5-1Y}
\end{eqnarray}
\end{theorem}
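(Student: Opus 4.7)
The plan is to start from the one-iteration inequality in Lemma \ref{lem:linear-1step} instantiated at the KKT point $(x,y,\lambda)=(x^*,y^*,\lambda^*)$, then systematically convert $\varphi$-terms into $\Psi$-terms and absorb the two cross terms using Young's inequality with parameters $\tau_1,\tau_2,\delta$, and finally inject the desired $c\,\sigma_{\min}(BB^\top)\|\lambda^{k+1}-\lambda^*\|^2$ via the optimality condition of the $y$-subproblem. First, I would observe the algebraic identity $\varphi(z^k,z^*)=\Psi(z^k,z^*)+\tfrac{\mu}{2}\|x^k-x^*\|^2$, which is immediate from \eqref{kkt1}--\eqref{kkt3}: the linear term $-\langle\lambda^*,Ax^k+By^k-b\rangle$ pairs with $A^\top\lambda^*\in\partial F(x^*)$ and $B^\top\lambda^*=\nabla h(y^*)$ so that the $\lambda^*$-contribution is exactly what is needed to convert $F(x^k)-F(x^*)$ and $\langle y^k-y^*,\nabla h(y^k)\rangle$ into the symmetrized Bregman-type quantity $\Psi$. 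Substituting this identity into \eqref{lin-ineq1-1Y} already produces the $(1-\theta)\Psi(z^k,z^*)+\tfrac{\beta(1-\theta)}{2}\|r^k\|^2+\tfrac{1}{2}\|x^k-x^*\|^2_{P+\mu(1-\theta)I}+\cdots$ shape on the right that matches $\psi(z^k,z^*;P,Q,\beta,\rho,c,\tau_2)$ up to the $c$-, $\tau_2$-, and $\alpha$-dependent refinements.

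Next I would control the two cross terms $\beta\langle A(x^{k+1}-x^*),B(y^{k+1}-y^k)\rangle$ and $\beta(1-\theta)\langle B(y^k-y^*),A(x^{k+1}-x^k)\rangle$ via Young's inequality with parameters $\tau_1,\tau_2$:
\begin{align*}
\beta\langle A(x^{k+1}-x^*),B(y^{k+1}-y^k)\rangle &\le \tfrac{\beta}{2\tau_1}\|A(x^{k+1}-x^*)\|^2+\tfrac{\beta\tau_1}{2}\|B(y^{k+1}-y^k)\|^2,\\
\beta(1-\theta)\langle B(y^k-y^*),A(x^{k+1}-x^k)\rangle &\le \tfrac{\beta(1-\theta)\tau_2}{2}\|A(x^{k+1}-x^k)\|^2+\tfrac{\beta(1-\theta)}{2\tau_2}\|B(y^k-y^*)\|^2.
\end{align*}
The $\tau_1$-part produces the $-\tfrac{\beta}{\tau_1}A^\top A$ shift in the LHS $x$-matrix, the $\tau_1\beta B^\top B$ part is absorbed into $Q$ via \eqref{eq:choice-Q-mat}, the $\tau_2\beta(1-\theta)A^\top A$ part is absorbed into $P$ via \eqref{eq:choice-P-mat} together with the $-\tfrac{L_m}{2}\|x^{k+1}-x^k\|^2$ from Lemma \ref{lem:linear-1step}, and the $\tfrac{\beta(1-\theta)}{2\tau_2}B^\top B$ part is precisely what appears inside the $\psi$-definition on the right.

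The technically hardest step, which I expect to be the main obstacle, is manufacturing the $\tfrac{c}{2}\sigma_{\min}(BB^\top)\|\lambda^{k+1}-\lambda^*\|^2$ term on the left, since Lemma \ref{lem:linear-1step} gives nothing of this form directly. I would proceed by combining the $y$-subproblem optimality, the KKT identity $B^\top\lambda^*=\nabla h(y^*)$, and the dual recursion $\lambda^{k+1}-\lambda^k=-\rho r^{k+1}$ to obtain an expression of the form
\begin{equation*}
B^\top(\lambda^{k+1}-\lambda^*)=\bigl(\nabla h(y^{k+1})-\nabla h(y^*)\bigr)+Q(y^{k+1}-y^k)+\text{(residual terms in }B^\top r^{k+1}\text{)},
\end{equation*}
valid on the event that a $y$-update occurs (and trivially combined with the $y^{k+1}=y^k$ case via expectation, which produces the $\kappa$-dependent factors). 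Squaring both sides, invoking Lipschitzness of $\nabla h$ (to bring out $L_h^2\|y^{k+1}-y^*\|^2$, explaining the $-8cL_h^2 I$ correction on $Q$), and splitting through a weighted Young inequality with parameter $\delta$ (producing the $(1+\delta)$ and $(1+\tfrac{1}{\delta})$ factors) yields a bound whose conditional expectation is controlled by $\|B^\top r^{k+1}\|^2$, $\|y^{k+1}-y^k\|^2_{Q^\top Q}$, and the telescoping dual quantity. The matrix inequality \eqref{eq:cond-kappa-del} is exactly the condition required so that the $\tfrac{1}{\theta}\|\lambda^{k+1}-\lambda^k\|^2$ and $(1-\theta)\|\lambda^k-\lambda^*\|^2$ terms on the right of this bound can be merged with the analogous telescoping $\Delta$-quantity from Lemma \ref{lem:linear-1step} without introducing a slack.

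Finally, I would account for the $(1-\alpha)\Psi(z^{k+1},z^*)$ coefficient and the auxiliary $\tfrac{1}{\gamma}\|r^{k+1}\|^2$ term on the LHS. Since $\Psi(z^{k+1},z^*)\ge \tfrac{\mu}{2}\|x^{k+1}-x^*\|^2+\nu\|y^{k+1}-y^*\|^2$ by \eqref{scvx-Fh}, I can reserve an $\alpha$-fraction of $\Psi(z^{k+1},z^*)$ to transfer into the $x$- and $y$-matrix terms, producing the $+\tfrac{\alpha\mu}{2}I$ shift on the $x$-side and the $+\tfrac{3\alpha\nu}{2}I$ shift on the $y$-side (the factor $\tfrac{3}{2}$ rather than $1$ comes from combining $\alpha\nu$ from $\Psi$ with an additional $\tfrac{\alpha\nu}{2}$ purchased from $\Psi$ against a $\gamma^{-1}\|r^{k+1}\|^2$ chunk through the feasibility residual bound $\|r^{k+1}\|\le\|A\|_2\|x^{k+1}-x^*\|+\|B\|_2\|y^{k+1}-y^*\|$, exactly where the choice $\gamma=\max\{8\|A\|_2^2/(\alpha\mu),\,8\|B\|_2^2/(\alpha\nu)\}$ is used). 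Collecting all pieces then yields \eqref{lin-ineq5-1Y}. The main obstacle remains the $c\sigma_{\min}(BB^\top)$ step, because one must carefully track on which probability branch the $y$-update occurs and ensure condition \eqref{eq:cond-kappa-del} is tight enough to match the telescoping dual term; all other steps are Young-inequality bookkeeping driven by conditions \eqref{eq:paras-lin}.
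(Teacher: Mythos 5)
Your proposal follows essentially the same route as the paper's proof: instantiate Lemma \ref{lem:linear-1step} at the KKT point to turn $\varphi$ into $\Psi$, absorb the two cross terms by Young's inequality with $\tau_1,\tau_2$ under \eqref{eq:paras-lin}, express $B^\top(\tilde{\lambda}^{k+1}-\lambda^*)$ via the $y$-optimality condition and the KKT identity and convert it to the $\sigma_{\min}(BB^\top)$-weighted telescoping dual term through \eqref{eq:cond-kappa-del}, and finally spend an $\alpha$-fraction of the strong convexity of $\Psi$ to generate the $\frac{1}{\gamma}\|r^{k+1}\|^2$ term and the matrix shifts. The only cosmetic slips are that $\kappa$ enters through the deterministic matrix inequality (Lemma \ref{lem:ineq-lam-term}) rather than through the expectation over the two $y$-branches, and the $\frac{3\alpha\nu}{2}$ shift arises from $\alpha\nu-\frac{\alpha\nu}{4}$ inside the $\frac{1}{2}$-weighted norm after paying $\frac{\alpha\nu}{4}\|y^{k+1}-y^*\|^2$ toward $\frac{1}{\gamma}\|r^{k+1}\|^2$; neither affects correctness.
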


Using Theorem \ref{thm-pre}, a linear convergence rate of Algorithm \ref{alg:rpdc-lin} follows.
\begin{theorem}\label{thm-linear}
Under Assumptions \ref{assump:str-cvx-F} through \ref{assump:str-cvx-h}, let $\{( x^k, y^k,\vlam^k)\}$ be the sequence generated from Algorithm \ref{alg:rpdc-lin} with $\rho=\theta\beta$. Let $0<\alpha<\theta$ and $\gamma=\max\left\{\frac{8\|A\|_2^2}{\alpha\mu},\frac{8\|B\|_2^2}{\alpha\nu}\right\}$. Assume that $B$ is full row-rank and $\max\{\|A\|_2,\|B\|_2\}\le 1$.  Choose $\delta,\kappa,\eta_x,\eta_y,c,\beta,\tau_1,\tau_2$ satisfying \eqref{eq:cond-kappa-del} and \eqref{eq:paras-lin}, and in addition,
\begin{subequations}\label{eq:paras-lin2}
\begin{eqnarray}
\frac{\alpha}{2}\mu +\theta\mu &>& \frac{\beta}{\tau_1}\label{eq:paras-lin2-tau1}\\
\frac{3\alpha\nu}{4} &>& 4cL_h^2+\frac{\beta(1-\theta)}{2\tau_2}\label{eq:paras-lin2-c1}\\
\frac{1}{\gamma} &>& c\rho^2\left(\kappa+2(1-\theta)(1+\frac{1}{\delta})\right)+2c(\beta-\rho)^2.\label{eq:paras-lin2-c2}
\end{eqnarray}
\end{subequations}
Then
\begin{equation}\label{eq:lin-cvg-ineq}
\psi(z^{k+1},z^*;P,Q,\beta,\rho,c,\tau_2)\le \frac{1}{\eta} \psi(z^k,z^*;P,Q,\beta,\rho,c,\tau_2),
\end{equation}
where
\begin{eqnarray*}
\eta &=&\min\left\{\frac{1-\alpha}{1-\theta},\, 1+\frac{\frac{\alpha}{2}\mu+\theta\mu-\frac{\beta}{\tau_1}}{\eta_x+\mu(1-\theta)},1+\frac{\frac{3\alpha\nu}{4}-4cL_h^2-\frac{\beta(1-\theta)}{2\tau_2}}{\frac{\eta_y}{2}+\frac{\beta(1-\theta)}{2\tau_2}},\,\right.\\
& & \hspace{1cm}
\left.1+\frac{\frac{2}{\gamma}-2c\rho^2\left(\kappa+2(1-\theta)(1+\frac{1}{\delta})\right)-4c(\beta-\rho)^2}{\beta(1-\theta)},\,1+c\rho\sigma_{\min}(BB^\top)\right\}>1.
\end{eqnarray*}
\end{theorem}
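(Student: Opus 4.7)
The strategy is to use Theorem \ref{thm-pre} as a black box and argue that, under the additional strict-inequality conditions in \eqref{eq:paras-lin2}, every term appearing on the left-hand side of \eqref{lin-ineq5-1Y} is bounded below by $\eta$ times the corresponding term in the definition \eqref{eq:def-psi} of $\psi(z^{k+1},z^*;P,Q,\beta,\rho,c,\tau_2)$. Since the right-hand side of \eqref{lin-ineq5-1Y} is exactly $\psi(z^k,z^*;\cdot)$, this immediately yields \eqref{eq:lin-cvg-ineq}. The normalizations $\|A\|_2,\|B\|_2\le 1$ let us freely replace $A^\top A$ by $I$ in upper bounds, and the full row-rankness of $B$ ensures $\sigma_{\min}(BB^\top)>0$, which is what produces a strict gain in the $\lambda$-block.

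First, I would rewrite the left-hand side of \eqref{lin-ineq5-1Y} grouped by the same five components that define $\psi$: the $\Psi$-term, the $x$-quadratic, the $y$-quadratic, the residual $\|r^{k+1}\|^2$ (absorbing the $\|B^\top r^{k+1}\|^2$ into it via $\|B^\top r^{k+1}\|^2\le \|B\|_2^2\|r^{k+1}\|^2\le \|r^{k+1}\|^2$), and the $\lambda$-triple. For each block I then need to exhibit a factor strictly greater than $1$ by which the new coefficient dominates the old. Concretely: (a) the $\Psi$-block contributes the factor $\frac{1-\alpha}{1-\theta}$ since $\alpha<\theta$; (b) the $x$-block, using \eqref{eq:paras-lin2-tau1} together with $P=\eta_x I-\beta A^\top A$ and $\|A\|_2\le 1$, yields the factor $1+\frac{\frac{\alpha}{2}\mu+\theta\mu-\beta/\tau_1}{\eta_x+\mu(1-\theta)}$; (c) the $y$-block, using \eqref{eq:paras-lin2-c1} and $Q=\eta_y I-\beta B^\top B$ together with $\|B\|_2\le 1$, yields $1+\frac{\frac{3\alpha\nu}{4}-4cL_h^2-\frac{\beta(1-\theta)}{2\tau_2}}{\frac{\eta_y}{2}+\frac{\beta(1-\theta)}{2\tau_2}}$; (d) the $r$-block, using \eqref{eq:paras-lin2-c2} and $\beta-\rho=\beta(1-\theta)$, gives $1+\frac{\frac{2}{\gamma}-2c\rho^2(\kappa+2(1-\theta)(1+\frac{1}{\delta}))-4c(\beta-\rho)^2}{\beta(1-\theta)}$; and (e) the $\lambda$-block is the only one that uses the full row-rankness of $B$: the coefficient $\frac{1}{2\rho}+\frac{c}{2}\sigma_{\min}(BB^\top)$ exceeds the baseline $\frac{1}{2\rho}$ by the factor $1+c\rho\sigma_{\min}(BB^\top)$.

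Taking the minimum of these five ratios gives exactly the $\eta>1$ stated in the theorem, and each of the five conditions in \eqref{eq:paras-lin2} (together with $\alpha<\theta$ and $\sigma_{\min}(BB^\top)>0$) guarantees the corresponding ratio is strictly larger than $1$. Chaining the five block-wise inequalities inside \eqref{lin-ineq5-1Y} produces
\[
\eta\cdot\psi(z^{k+1},z^*;P,Q,\beta,\rho,c,\tau_2)\le \psi(z^k,z^*;P,Q,\beta,\rho,c,\tau_2),
\]
which is \eqref{eq:lin-cvg-ineq}.

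The main obstacle I anticipate is the $\lambda$-block bookkeeping. In \eqref{eq:def-psi} the triple $\|\lambda^{k+1}-\lambda^*\|^2-(1-\theta)\|\lambda^k-\lambda^*\|^2+\frac{1}{\theta}\|\lambda^{k+1}-\lambda^k\|^2$ appears with mixed signs, so multiplying by $\eta>1$ does not act uniformly on all three parts, and one must verify that the contraction implied by $\sigma_{\min}(BB^\top)>0$ is enough to absorb the extra mass generated by the coefficient jump from $\frac{1}{2\rho}$ to $\frac{1}{2\rho}+\frac{c}{2}\sigma_{\min}(BB^\top)$ without violating any of the other blocks' budgets. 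A careful use of the identity in \eqref{uv-cross} and the coupling constraint \eqref{eq:cond-kappa-del} on $(\delta,\kappa)$ will handle this, and the proof then concludes by taking the minimum over the five ratios.
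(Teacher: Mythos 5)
Your plan matches the paper's proof: both take Theorem \ref{thm-pre} as the starting point and compare the left-hand side of \eqref{lin-ineq5-1Y} block by block against $\eta$ times the corresponding term of $\psi(z^{k+1},z^*;P,Q,\beta,\rho,c,\tau_2)$, using $\|A\|_2,\|B\|_2\le 1$ to absorb $A^\top A$, $B^\top B$ and $\|B^\top r^{k+1}\|^2$, and then take the minimum of the five resulting ratios. The one point to tidy up is that the $\lambda$-block needs no appeal to \eqref{eq:cond-kappa-del}: the triple $\|\lambda^{k+1}-\lambda^*\|^2-(1-\theta)\|\lambda^{k}-\lambda^*\|^2+\frac{1}{\theta}\|\lambda^{k+1}-\lambda^k\|^2$ is nonnegative (complete the square, as in the remark following the theorem), so it suffices to compare the scalar coefficients $\frac{\eta}{2\rho}$ and $\frac{1}{2\rho}+\frac{c}{2}\sigma_{\min}(BB^\top)$.
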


We finish this section by making a few remarks.

\begin{remark}
We can always rescale $A,B$ and $b$ without essentially altering the linear constraints. Hence, the assumption $\max\{\|A\|_2,\|B\|_2\}\le 1$ can be made without losing generality.
From \eqref{eq:lin-cvg-ineq}, it is easy to see that {when $P\succ 0$ and $Q\succ0$}, $(x^k,y^k)$ converges to $(x^*,y^*)$ R-linearly in expectation. In addition, note that
\begin{eqnarray*}
& & \|\lambda^{k+1}-\lambda^*\|^2-(1-\theta)\|\lambda^k-\lambda^*\|^2+\frac{1}{\theta}\|\lambda^{k+1}-\lambda^k\|^2\\
&=& \theta\|\lambda^{k+1}-\lambda^*\|^2+2(1-\theta)\langle\lambda^{k+1}-\lambda^*,\lambda^{k+1}-\lambda^k\rangle + (\frac{1}{\theta}-1+\theta)\|\lambda^{k+1}-\lambda^k\|^2\\
&\ge& \left(\theta-\frac{(1-\theta)^2}{\frac{1}{\theta}-1+\theta}\right)\|\lambda^{k+1}-\lambda^*\|^2 \\
&=& \frac{\theta}{\frac{1}{\theta}-1+\theta}\|\lambda^{k+1}-\lambda^*\|^2.
\end{eqnarray*}
Hence, \eqref{eq:lin-cvg-ineq} also implies an R-linear convergence of $\lambda^k$ to $\lambda^*$ in expectation.
\end{remark}

{
\begin{remark}
We give examples of parameters that satisfy the conditions required in Theorem \ref{thm-linear}. First consider the case of $\theta=1$, i.e., all blocks are updated at each iteration. In this case, we can choose $\delta=0,\kappa=\frac{1}{2}$ to satisfy \eqref{eq:cond-kappa-del} and $\eta_x=\beta\|A\|_2^2+L_f$ to satisfy \eqref{eq:choice-P-mat} and let $\alpha=\frac{1}{2}$ and $\tau_1=\frac{\beta}{\mu}$ to ensure that \eqref{eq:paras-lin2-tau1} holds. Finally, choose $\eta_y>\big(\beta+\frac{\beta^2}{\mu}\big)\|B\|_2^2$ and $c$ sufficiently small, and all other conditions in Theorem \ref{thm-linear} are satisfied.
Next consider the case of $\theta<1$. We can choose $\delta=\frac{\theta}{4(1-\theta)}$ and $\kappa=\frac{3}{\theta}+\frac{3\theta}{4}-2$ to satisfy \eqref{eq:cond-kappa-del}, and let $\alpha=\frac{\theta}{2}$, $\tau_1=\frac{\beta}{\theta\mu}$, $\tau_2=\frac{2\beta(1-\theta)}{\nu}$, $\eta_x=\beta(1+(1-\theta)\tau_2)\|A\|_2^2+L_m$, and $\eta_y>\beta(1+\tau_1)\|B\|_2^2$. With such choices, all other conditions required in Theorem \ref{thm-linear} hold when $c$ is sufficiently small. 
\end{remark}
}

\begin{remark}
If there is only one $x$-block and there is no $f$ function, then Algorithm \ref{alg:rpdc-lin} reduces to the so-called linearized ADMM. To show the linear convergence of the linearized ADMM, one scenario in \cite[Theorem 3.1]{deng2012global} assumes\footnote{Besides the scenario that $g$ and $h$ are strongly convex, $h$ is smooth, and $B$ is of full row-rank, \cite[Theorem 3.1]{deng2012global} also shows linear convergence of the linearized ADMM under three other different scenarios.} 
the strong convexity of $g$ and $h$, the smoothness of $h$,
and the full row-rankness of $B$. 
In Theorem \ref{thm-linear}, we make the same assumptions, and so our result can be considered as a generalization.
\end{remark}

\section{Numerical experiments}\label{sec:numerical}
The aim of this section is to test the practical performance of the proposed algorithms. 
We test Algorithm \ref{alg:arpdc} on 
quadratic programming
\begin{equation}\label{eq:qp}
\min_x F(x)=\frac{1}{2} x^\top Q x + c^\top x, \st Ax=b,\, x\ge0,
\end{equation}
{and Algorithm \ref{alg:rpdc-lin} on the log-barrier approximation of linear programming
\begin{equation}\label{eq:log-b-lp}
\min_{x,y} c^\top x - e^\top \log x - e^\top \log y, \st Ax+y = b,\, x_i\le u_i, \forall i.
\end{equation}
}

\textbf{Quadratic programming.} Two types of randomized implementations are considered: one with fixed parameters and the newly introduced one with adaptive parameters,
which shall be called nonadaptive RPDC and adaptive RPDC respectively. Note that the former reduces to the method proposed in \cite{GXZ-RPDCU2016} when applied to \eqref{eq:qp}. The purpose of the experiment is to test the effect of acceleration for the latter approach.

The data was generated randomly as follows. We let $Q=HDH^\top\in\RR^{n\times n}$, where $H$ is Gaussian randomly generated orthogonal matrix and $D$ is a diagonal matrix with $d_{ii}=1+(i-1)\frac{L-1}{n-1},\,i=1,\ldots,n$. Hence, the smallest and largest singular values of $Q$ are 1 and L respectively, and the objective of \eqref{eq:qp} is strongly convex with modulus $1$. The components of $c$ follow standard Gaussian distribution, and those of $b$ follow uniform distribution on $[0,1]$. We let $A=[B, I]\in\RR^{p\times n}$ to guarantee the existence of feasible solutions, where $B$ was generated according to standard Gaussian distribution. In addition, we normalized $A$ so that it has a unit spectral norm.

{In the test, we fixed $n=2000, p=200$ and varied $L$ among $\{10, 100, 1000\}$.} For both nonadaptive and adaptive RPDC, we evenly partitioned $x$ into $40$ blocks, i.e., each block consists of 50 coordinates, and we set $m=40$, i.e., all blocks are updated at each iteration. For the adaptive RPDC, we set the values of its parameters according to \eqref{eq:paras} with $\rho=1$, and those for the nonadaptive RPDC were set based on Theorem \ref{thm:naccl} with $\rho=\beta,\, \eta=100+\beta,\,\forall k$ where $\beta$ varied among $\{1,10,100,1000\}$. Figures \ref{fig:qp-p200-c10} through \ref{fig:qp-p200-c1000} plot the objective values and feasibility violations by Algorithm \ref{alg:arpdc} under these two different settings. From these results, we see that adaptive RPDC performed well for all three datasets with a single set of parameters while the performance of the nonadaptive one was severely affected by the penalty parameter.

\begin{figure}
\begin{center}
\begin{tabular}{cccc}
$\beta=1$ & $\beta=10$ & $\beta=100$ & $\beta=1000$ \\
\includegraphics[width=0.2\textwidth]{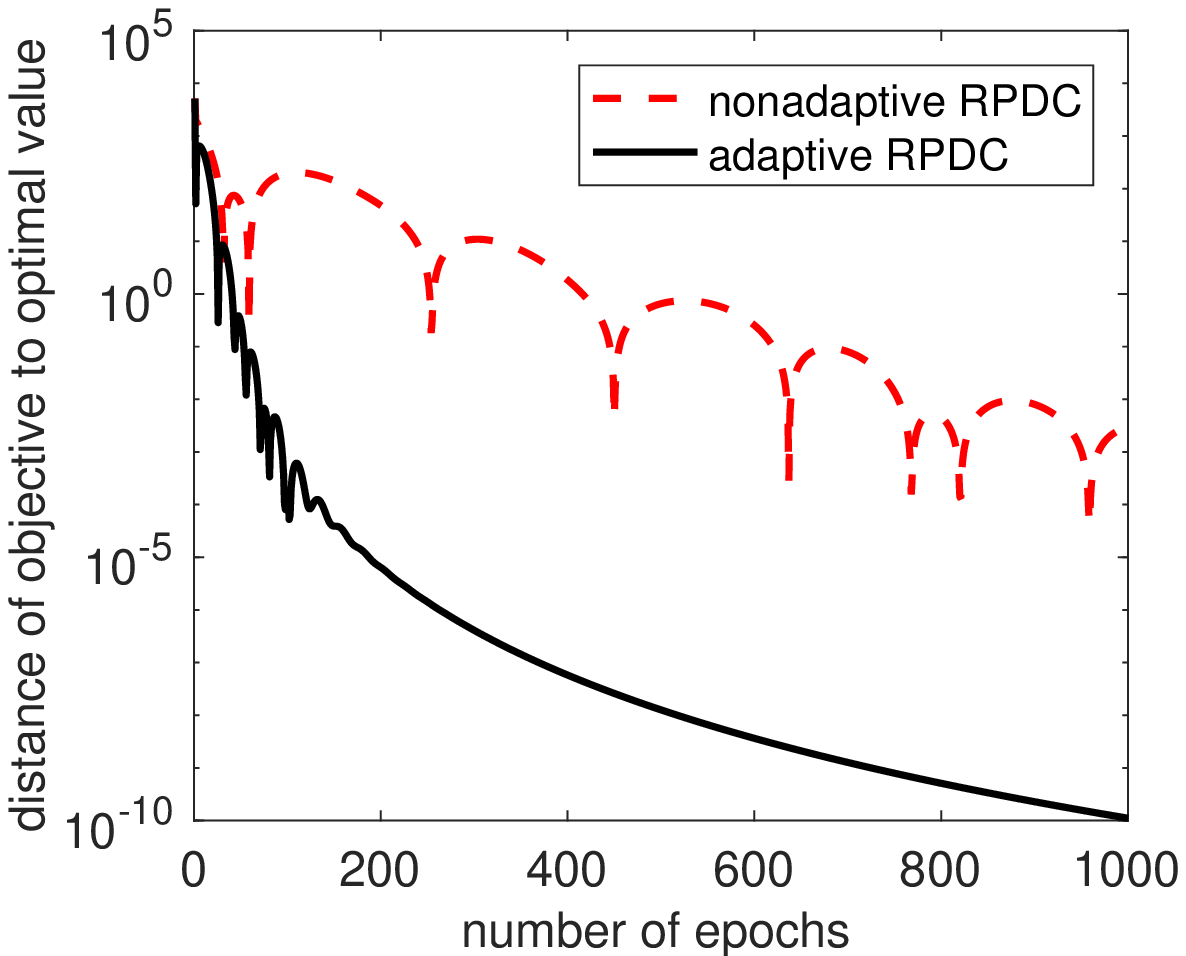}&
\includegraphics[width=0.2\textwidth]{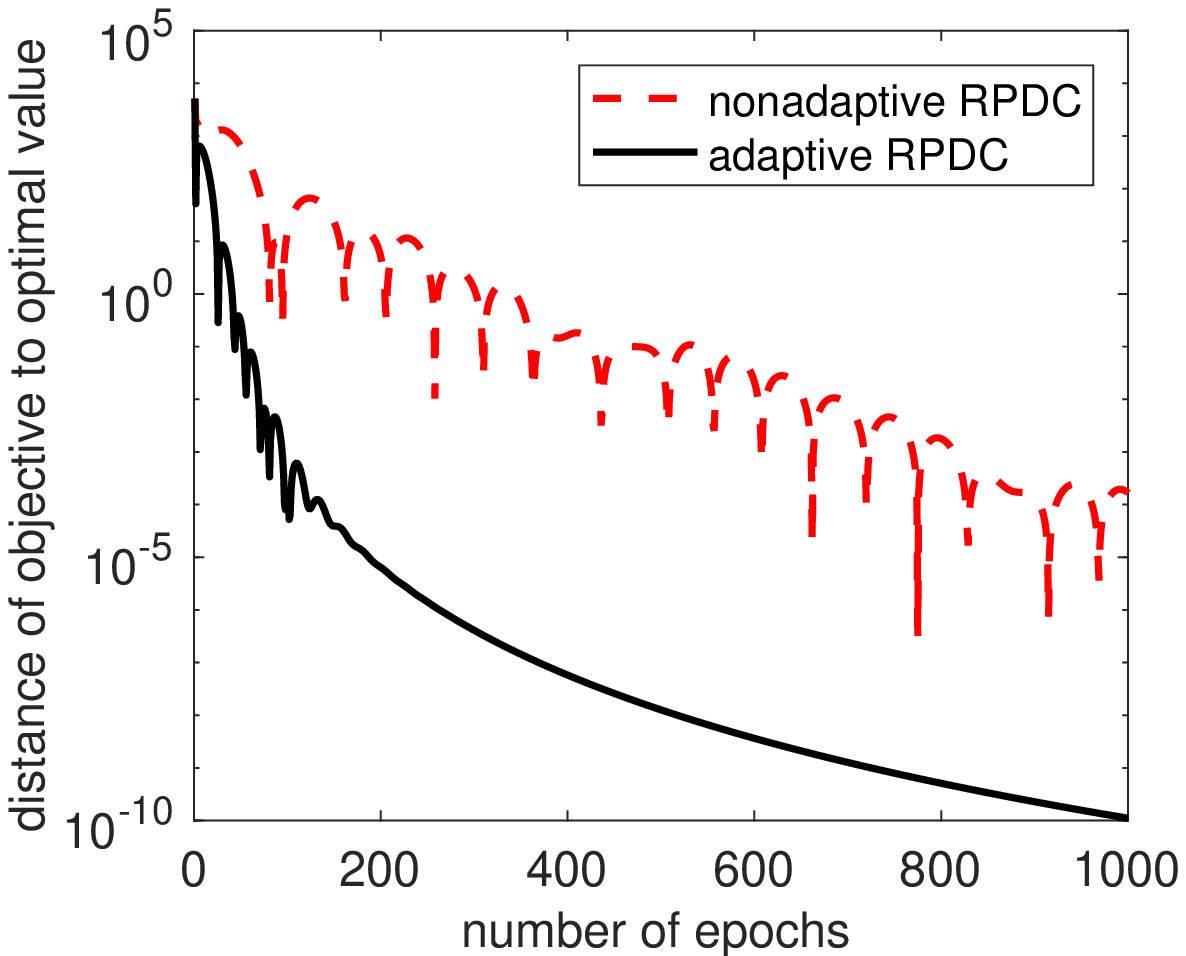}&
\includegraphics[width=0.2\textwidth]{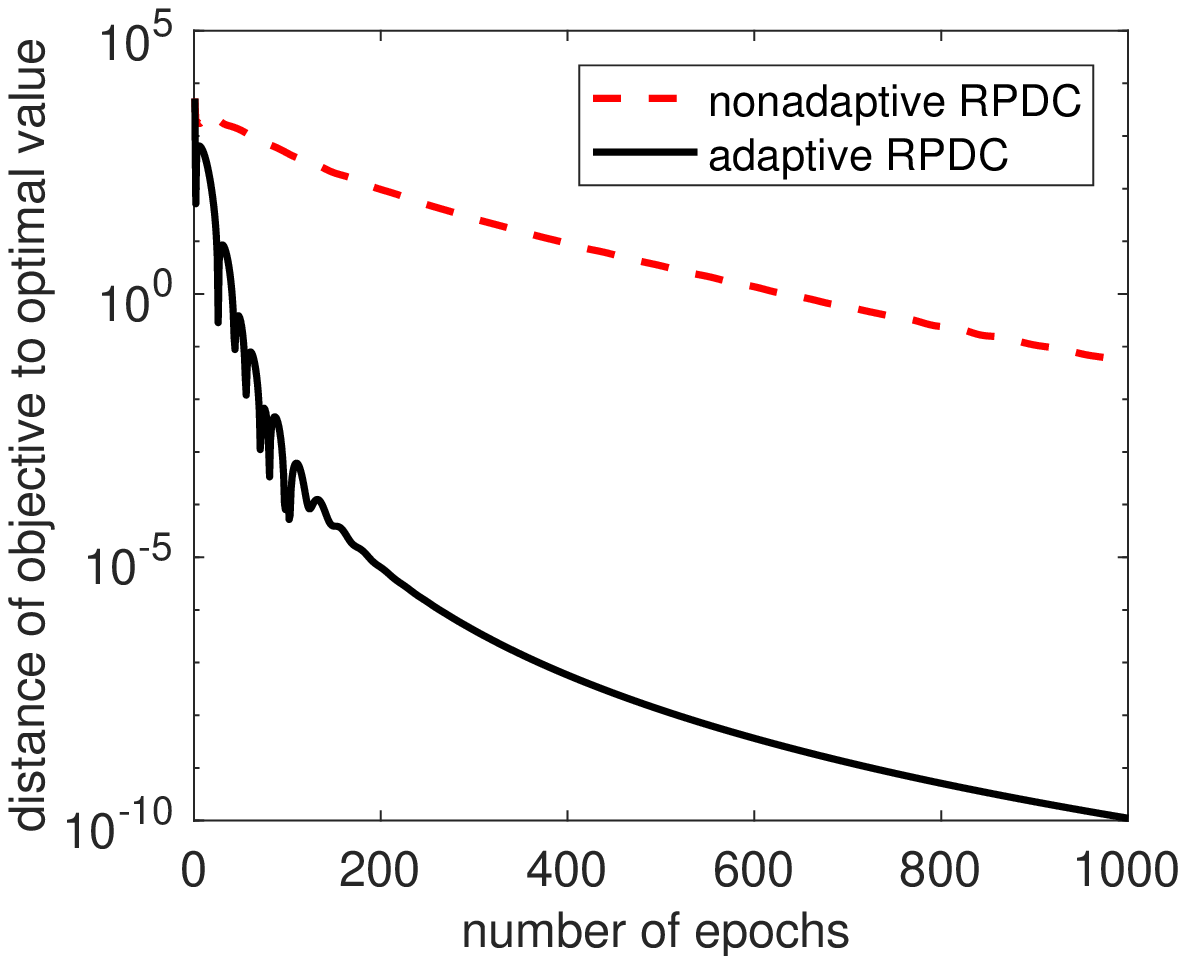}&
\includegraphics[width=0.2\textwidth]{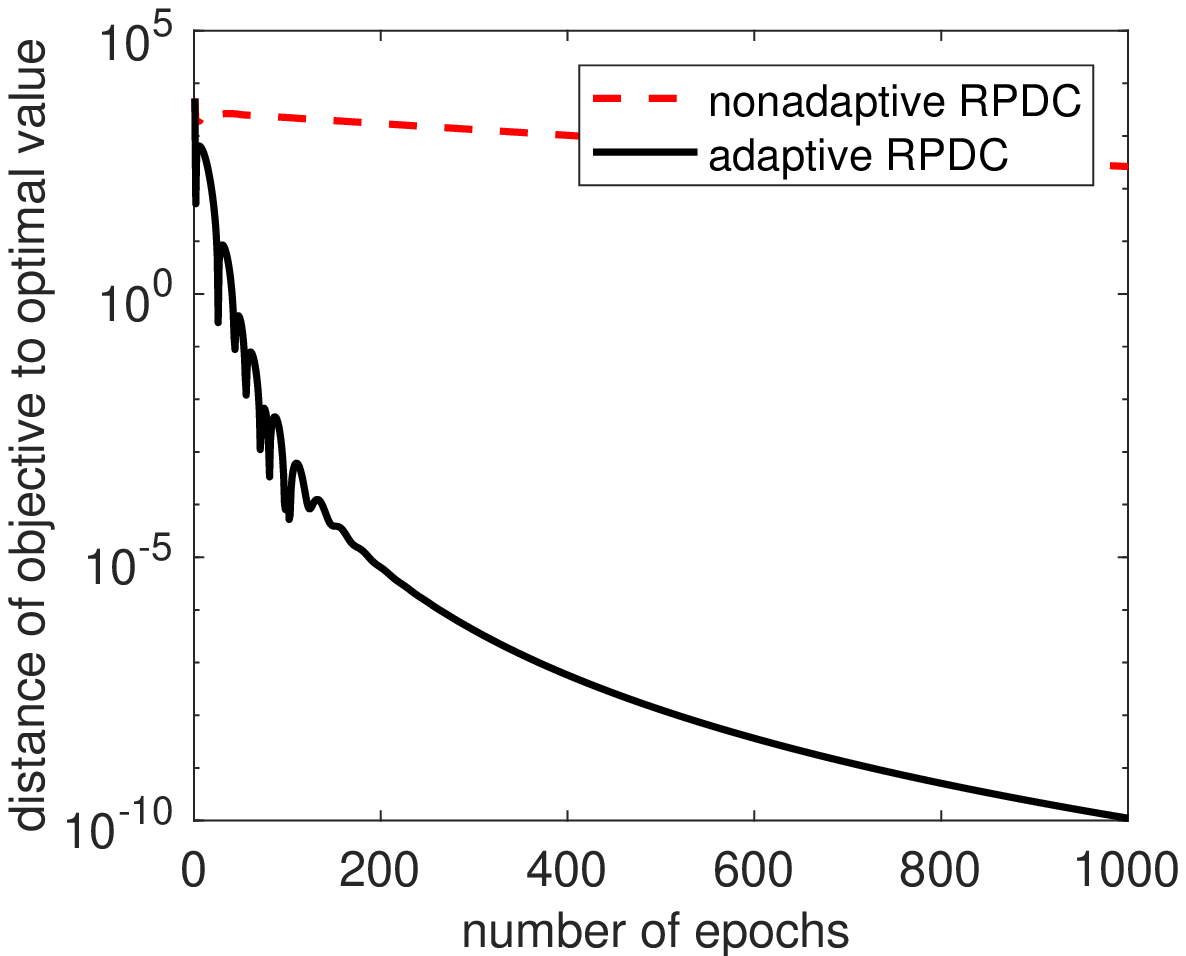}\\
\includegraphics[width=0.2\textwidth]{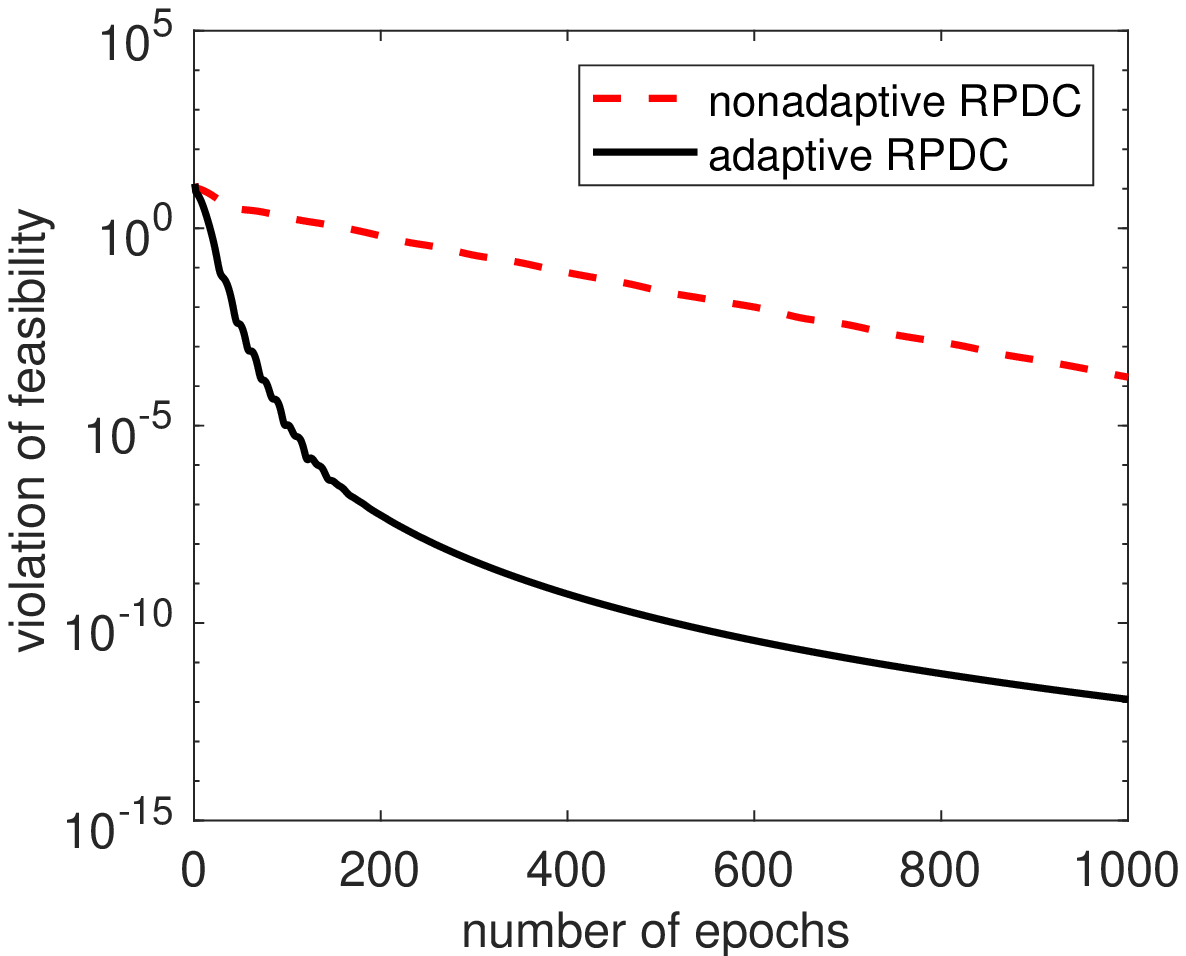}&
\includegraphics[width=0.2\textwidth]{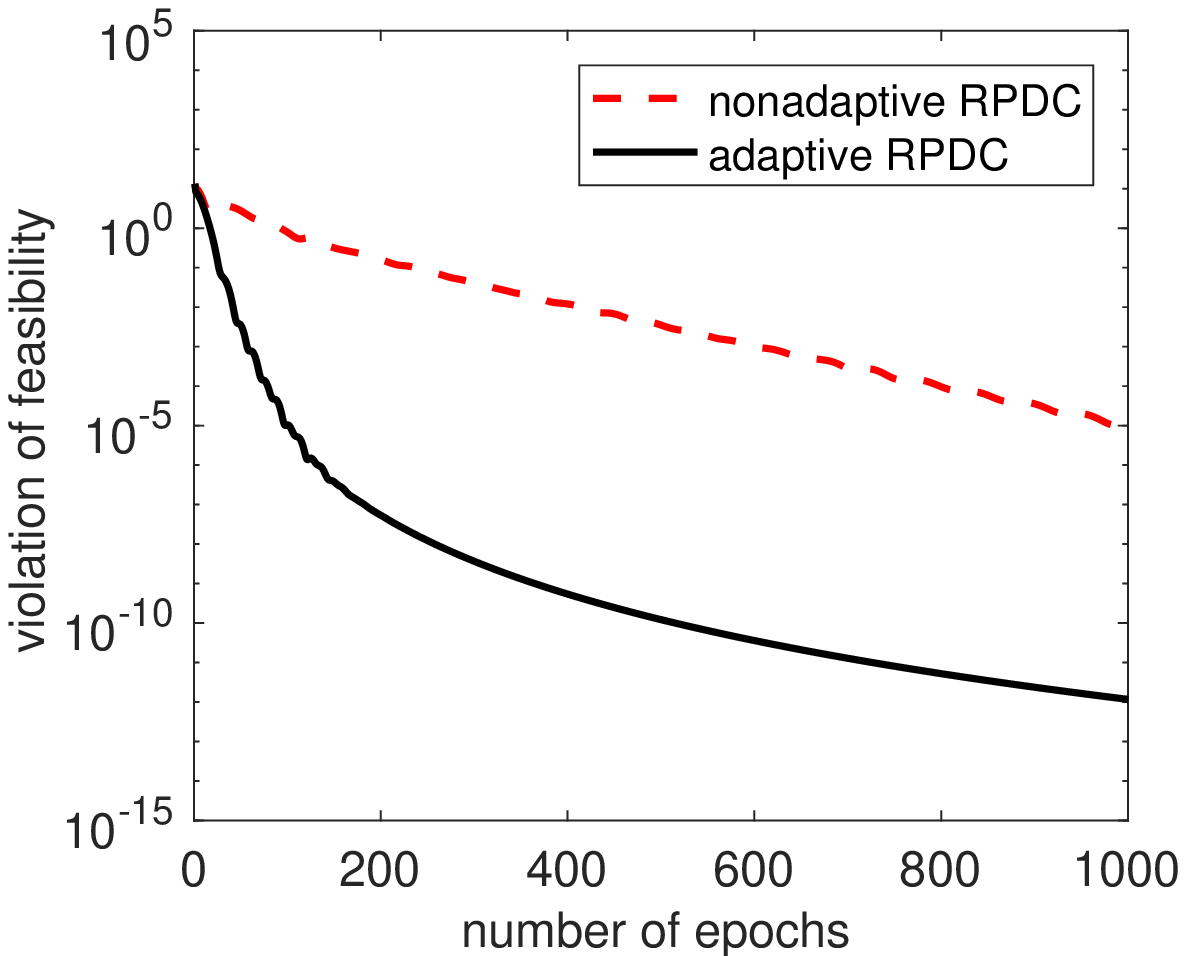}&
\includegraphics[width=0.2\textwidth]{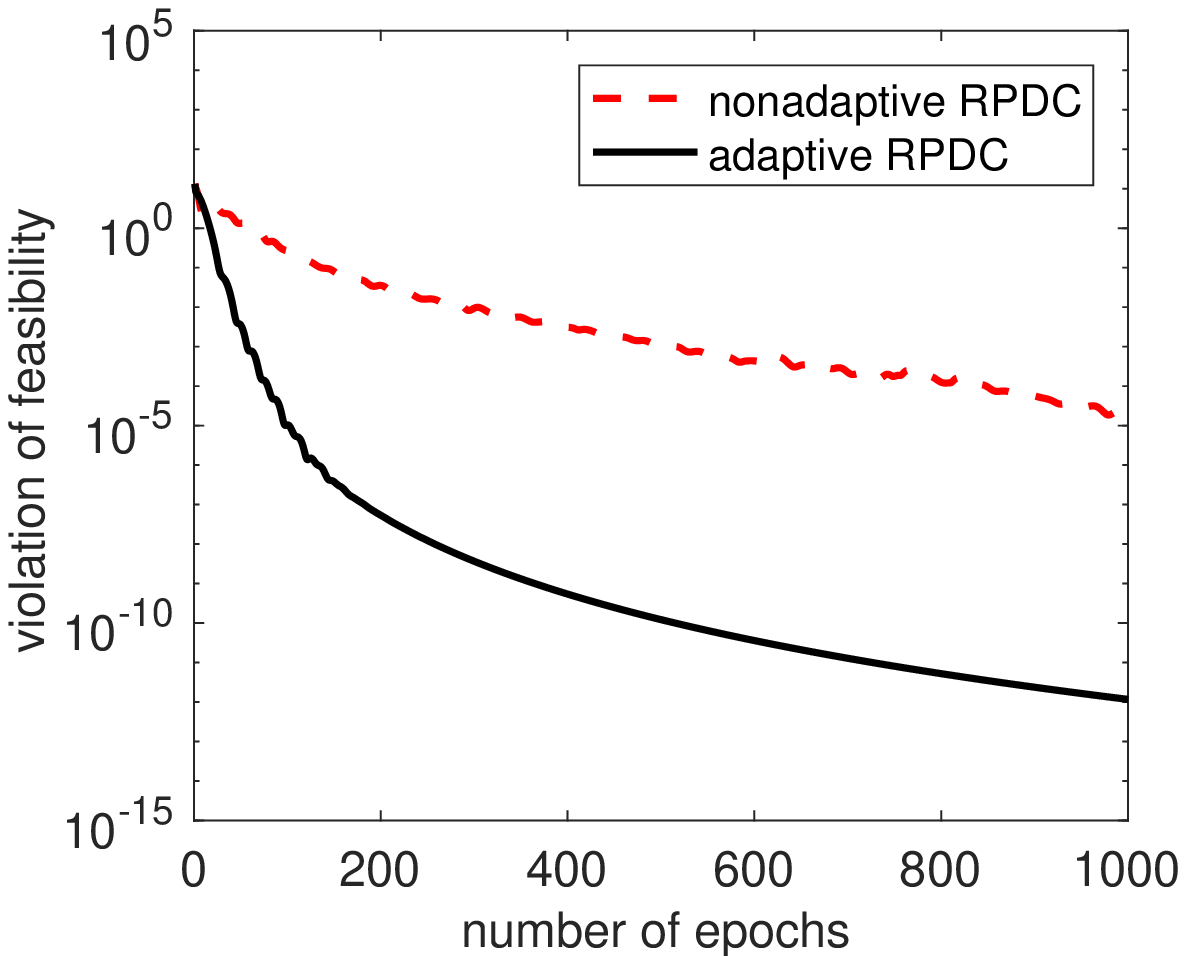}&
\includegraphics[width=0.2\textwidth]{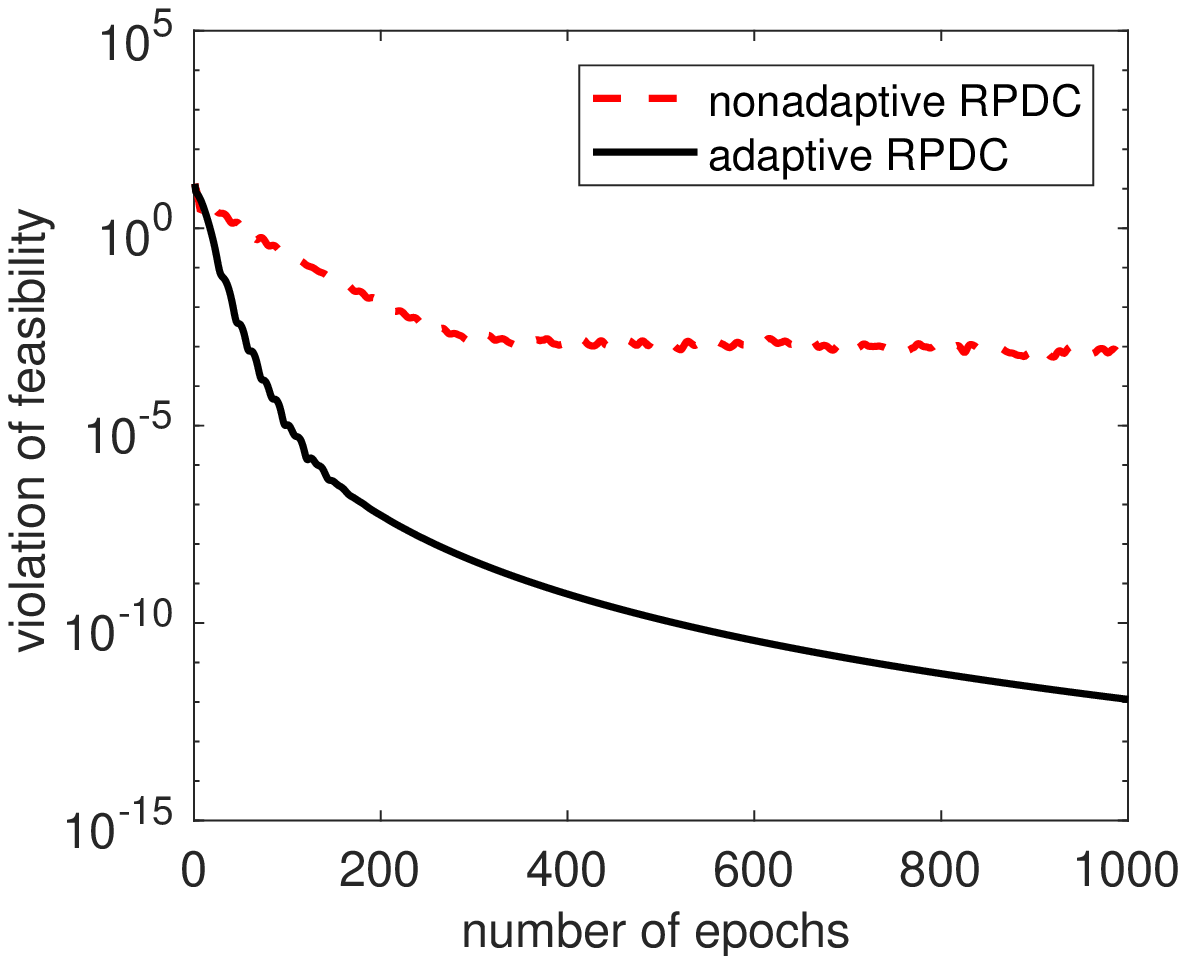}
\end{tabular}
\end{center}
\caption{Results by Algorithm \ref{alg:arpdc} with adaptive parameters and nonadaptive parameters for solving \eqref{eq:qp} with problem size $n=2000, p=200$ and condition number 10. The latter uses different penalty parameter $\beta$. Top row: difference of objective value to the optimal value $|F(x^k)-F(x^*)|$; bottom row: violation of feasibility $\|Ax^k-b\|$.}\label{fig:qp-p200-c10}
\end{figure}

\begin{figure}
\begin{center}
\begin{tabular}{cccc}
$\beta=1$ & $\beta=10$ & $\beta=100$ & $\beta=1000$ \\
\includegraphics[width=0.2\textwidth]{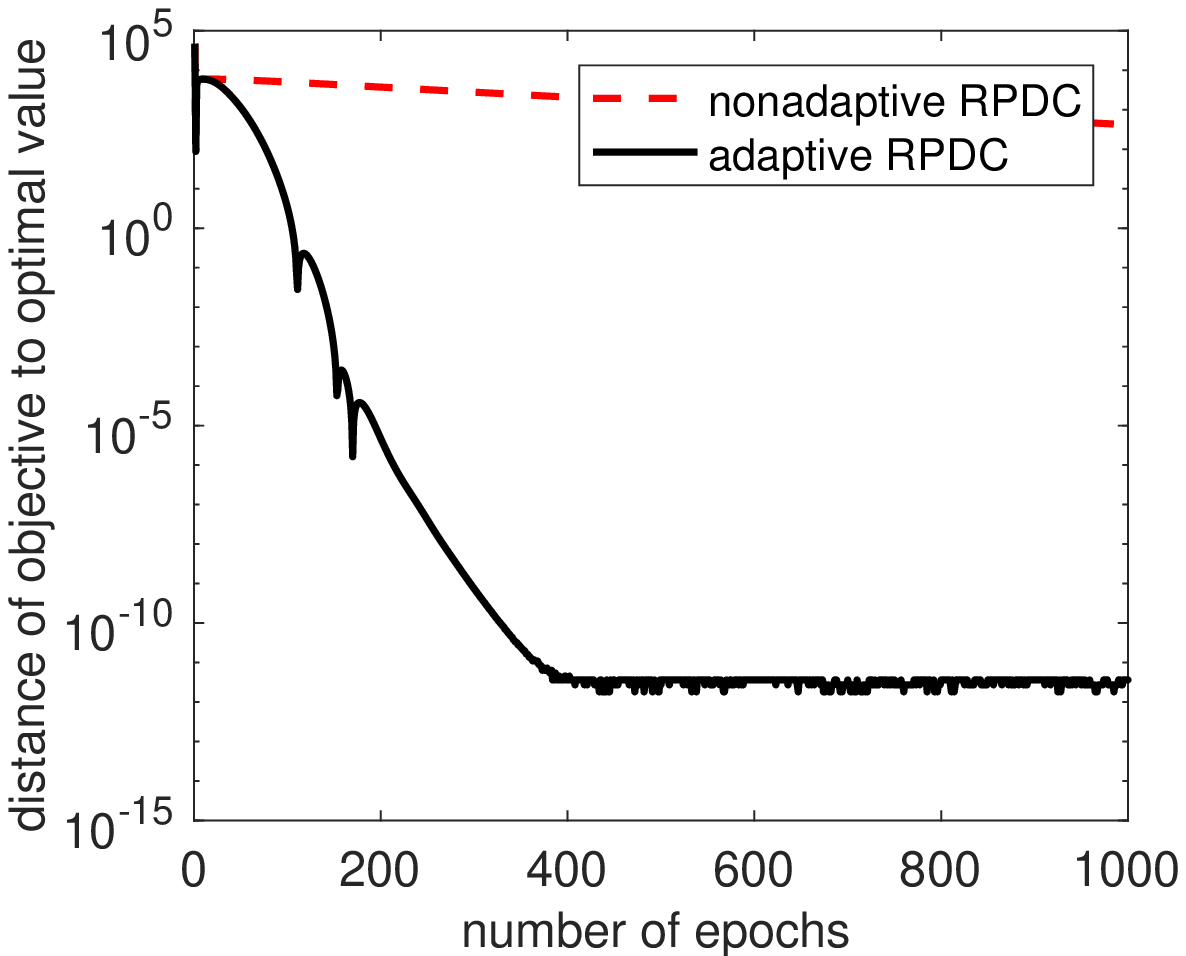}&
\includegraphics[width=0.2\textwidth]{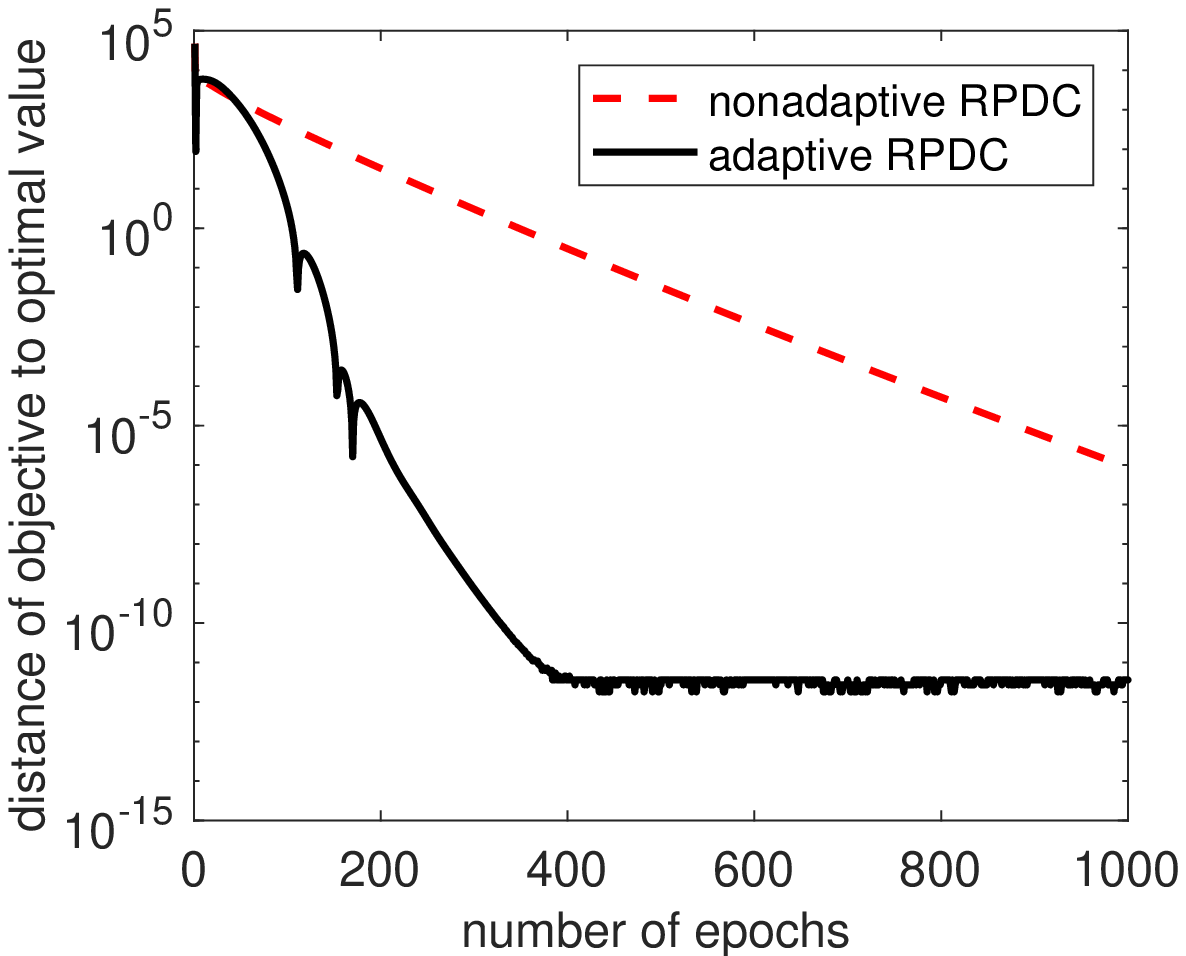}&
\includegraphics[width=0.2\textwidth]{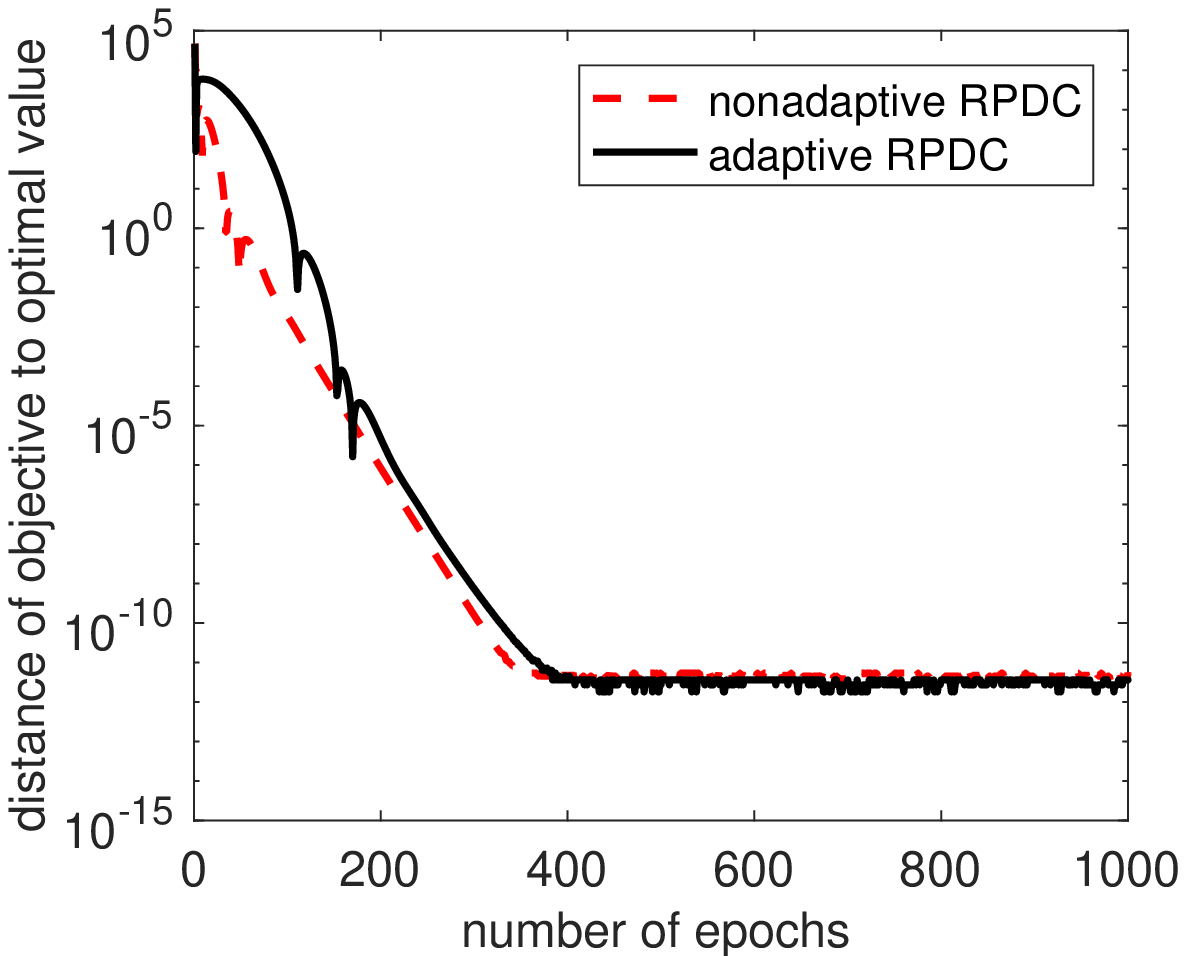}&
\includegraphics[width=0.2\textwidth]{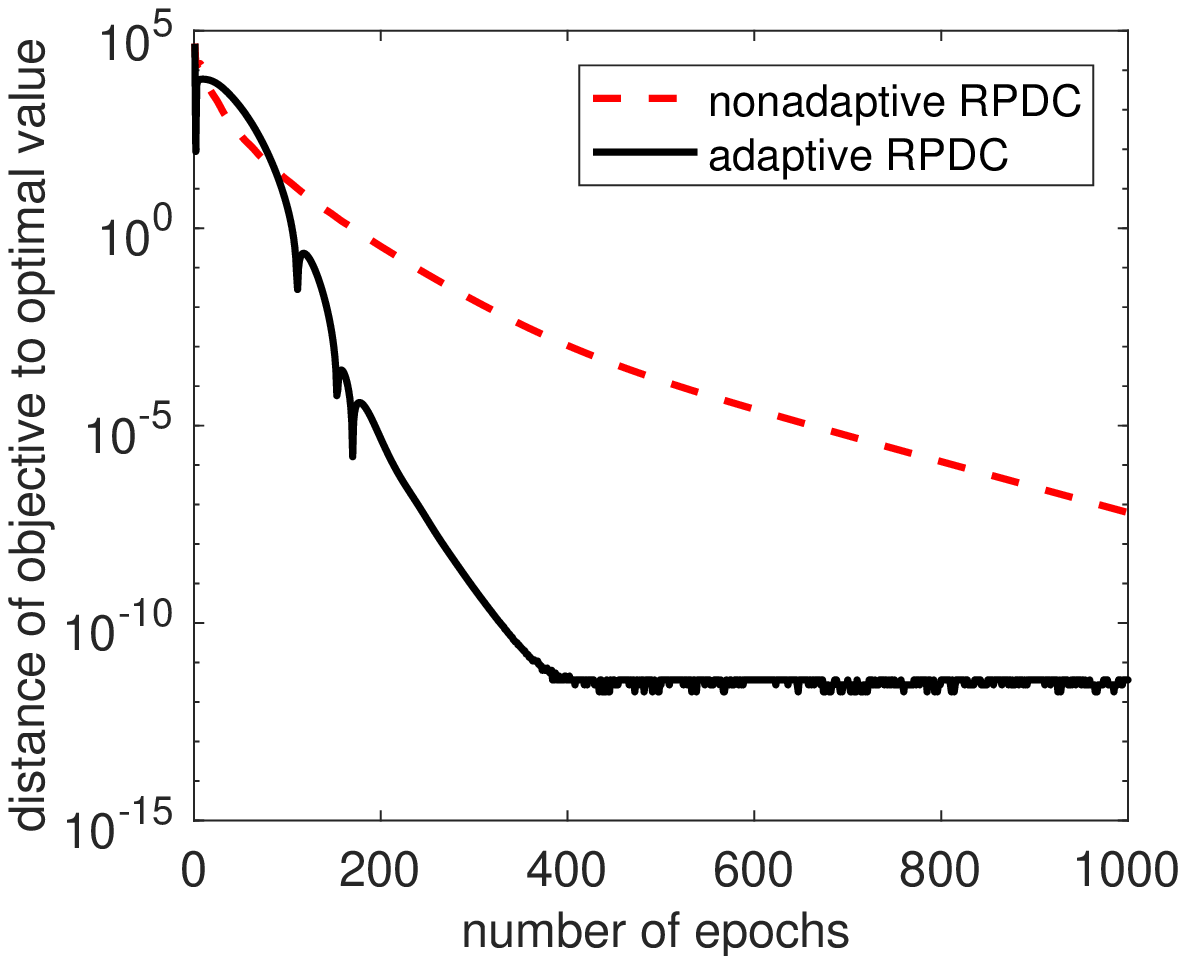}\\
\includegraphics[width=0.2\textwidth]{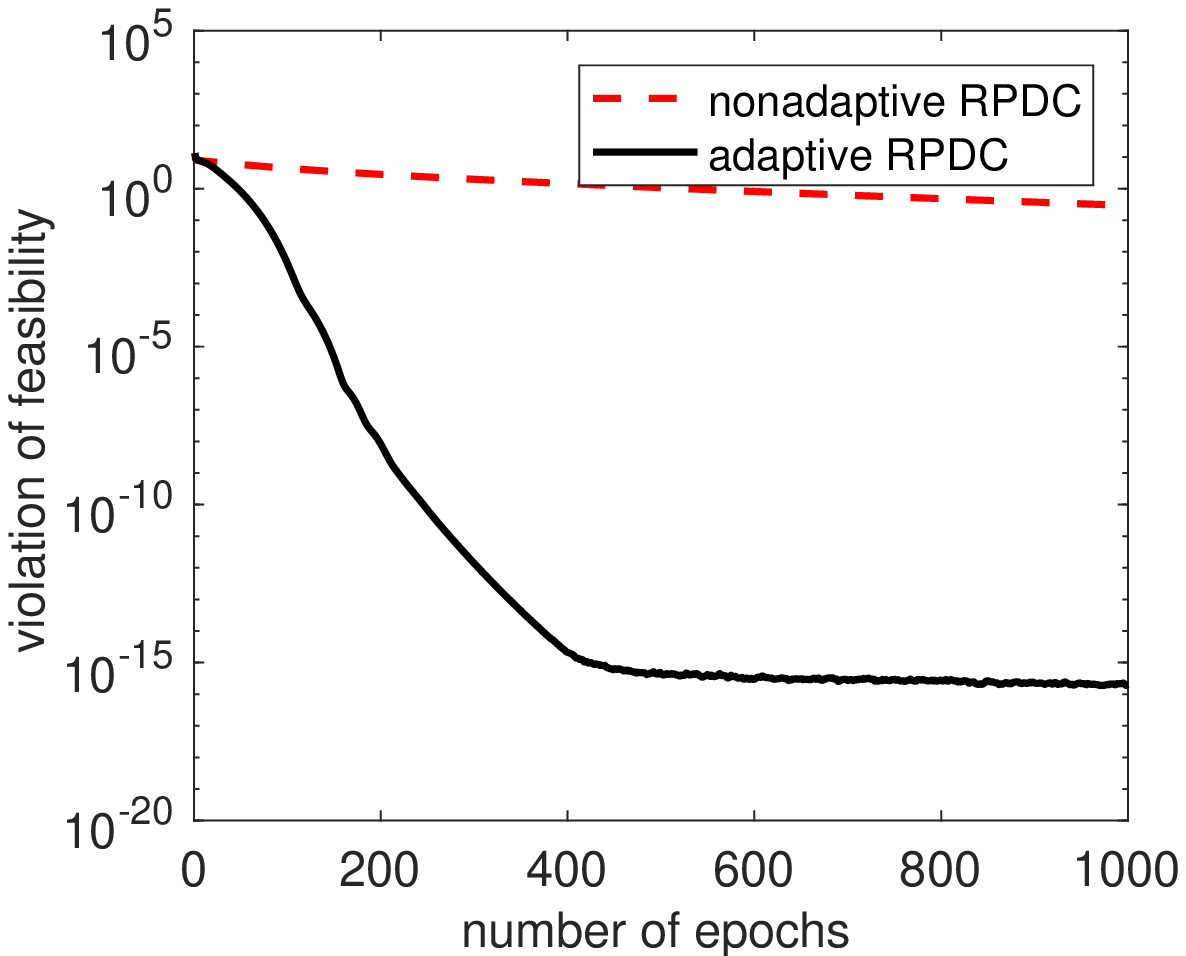}&
\includegraphics[width=0.2\textwidth]{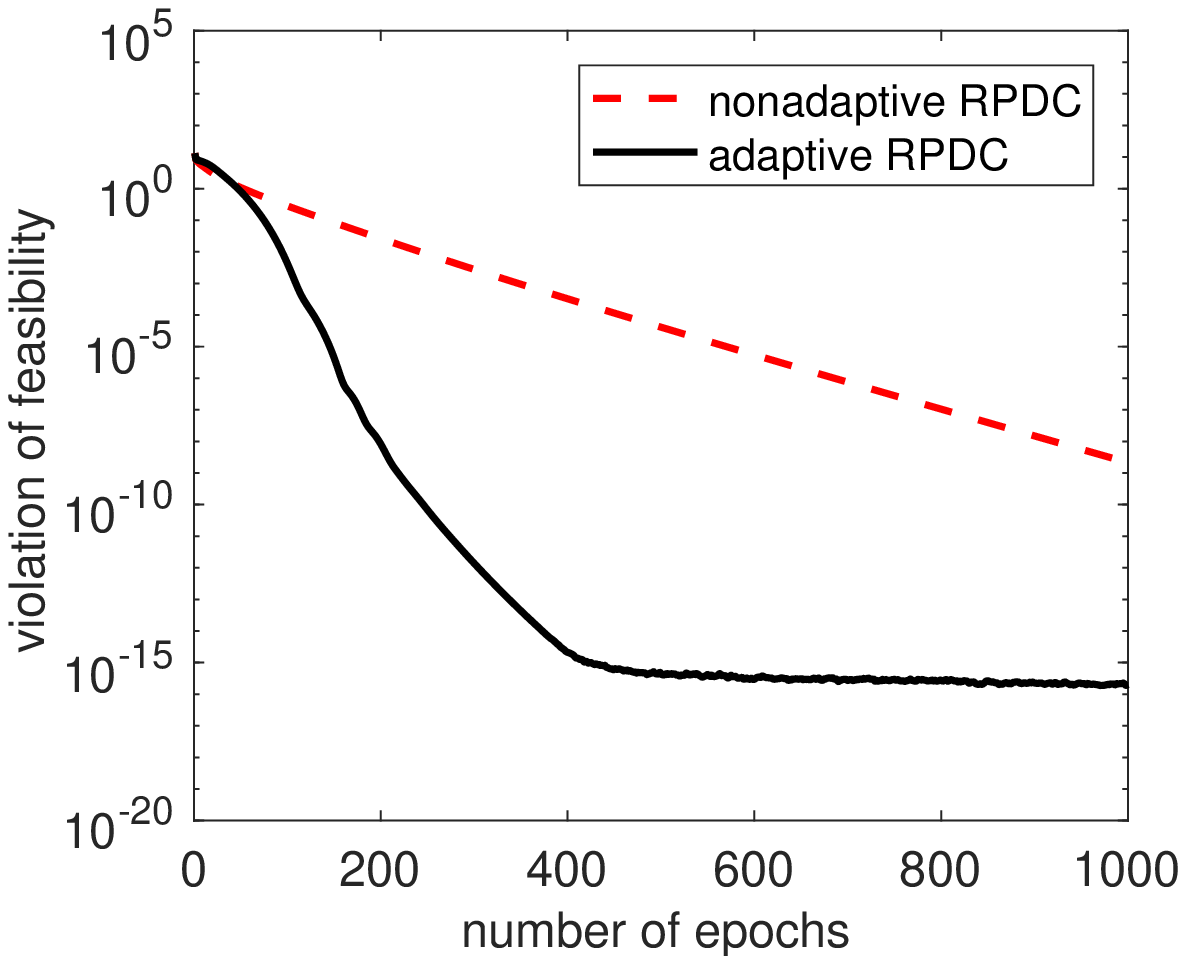}&
\includegraphics[width=0.2\textwidth]{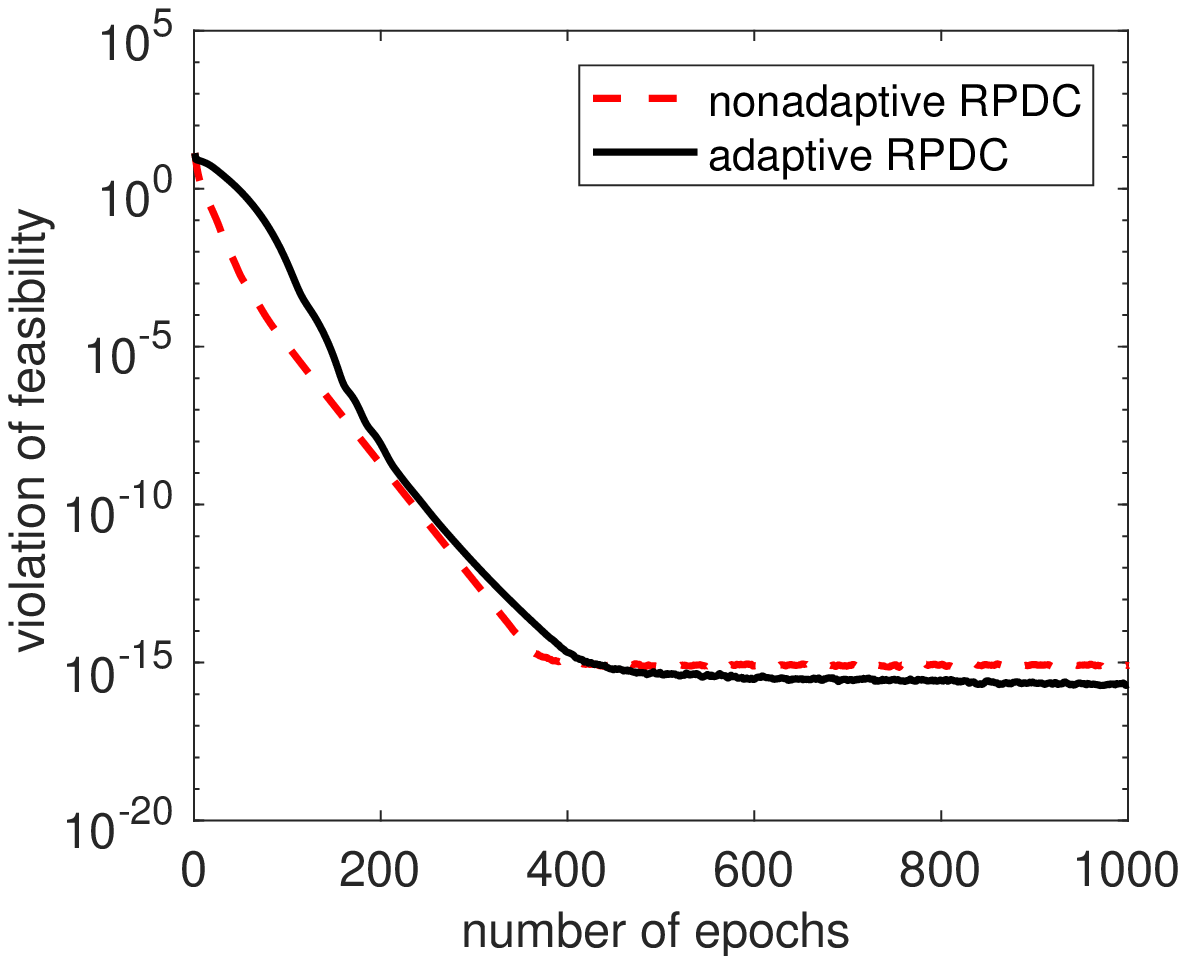}&
\includegraphics[width=0.2\textwidth]{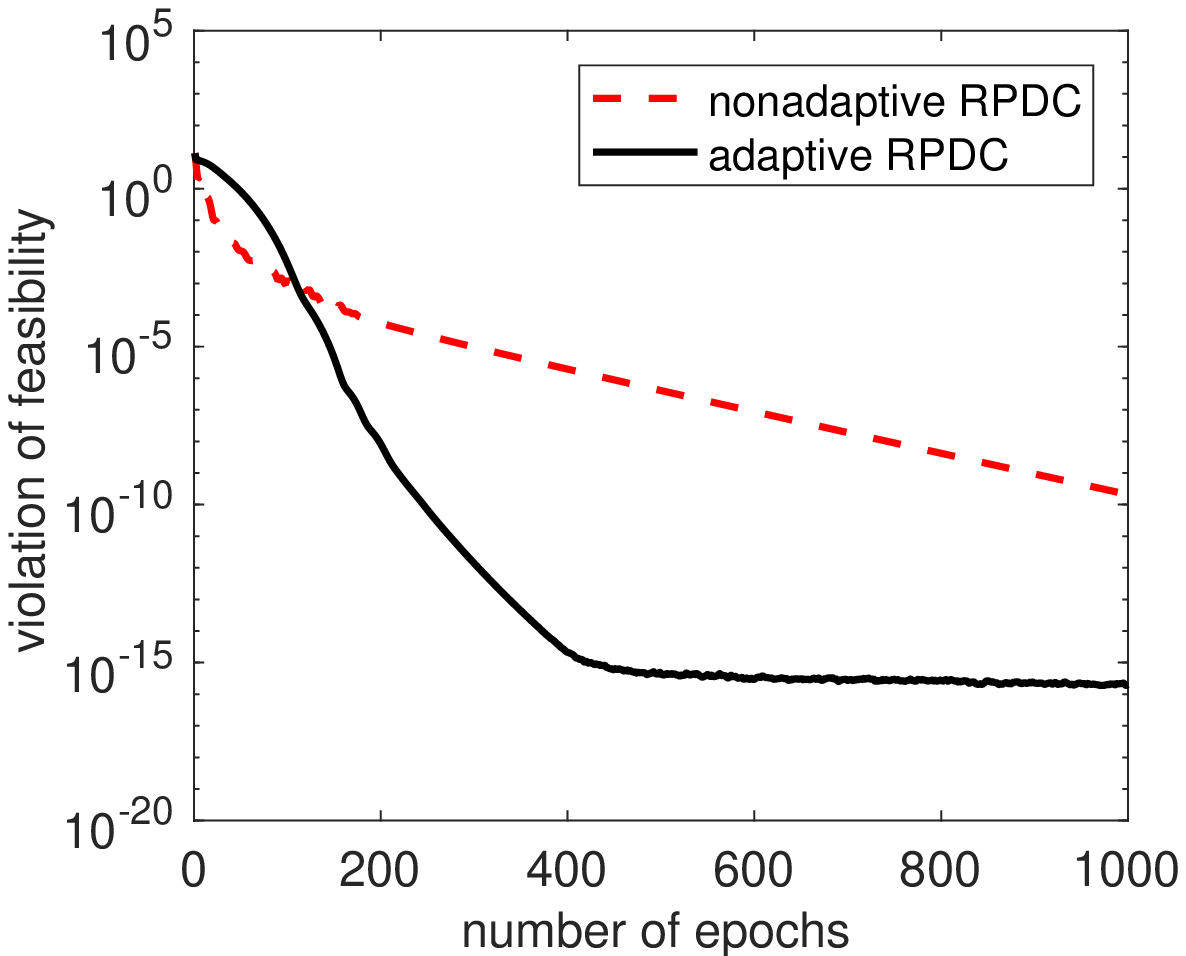}
\end{tabular}
\end{center}
\caption{Results by Algorithm \ref{alg:arpdc} with adaptive parameters and nonadaptive parameters for solving \eqref{eq:qp} with problem size $n=2000, p=200$ and condition number 100. The latter uses different penalty parameter $\beta$. Top row: difference of objective value to the optimal value $|F(x^k)-F(x^*)|$; bottom row: violation of feasibility $\|Ax^k-b\|$.}\label{fig:qp-p200-c100}
\end{figure}

\begin{figure}
\begin{center}
\begin{tabular}{cccc}
$\beta=1$ & $\beta=10$ & $\beta=100$ & $\beta=1000$ \\
\includegraphics[width=0.2\textwidth]{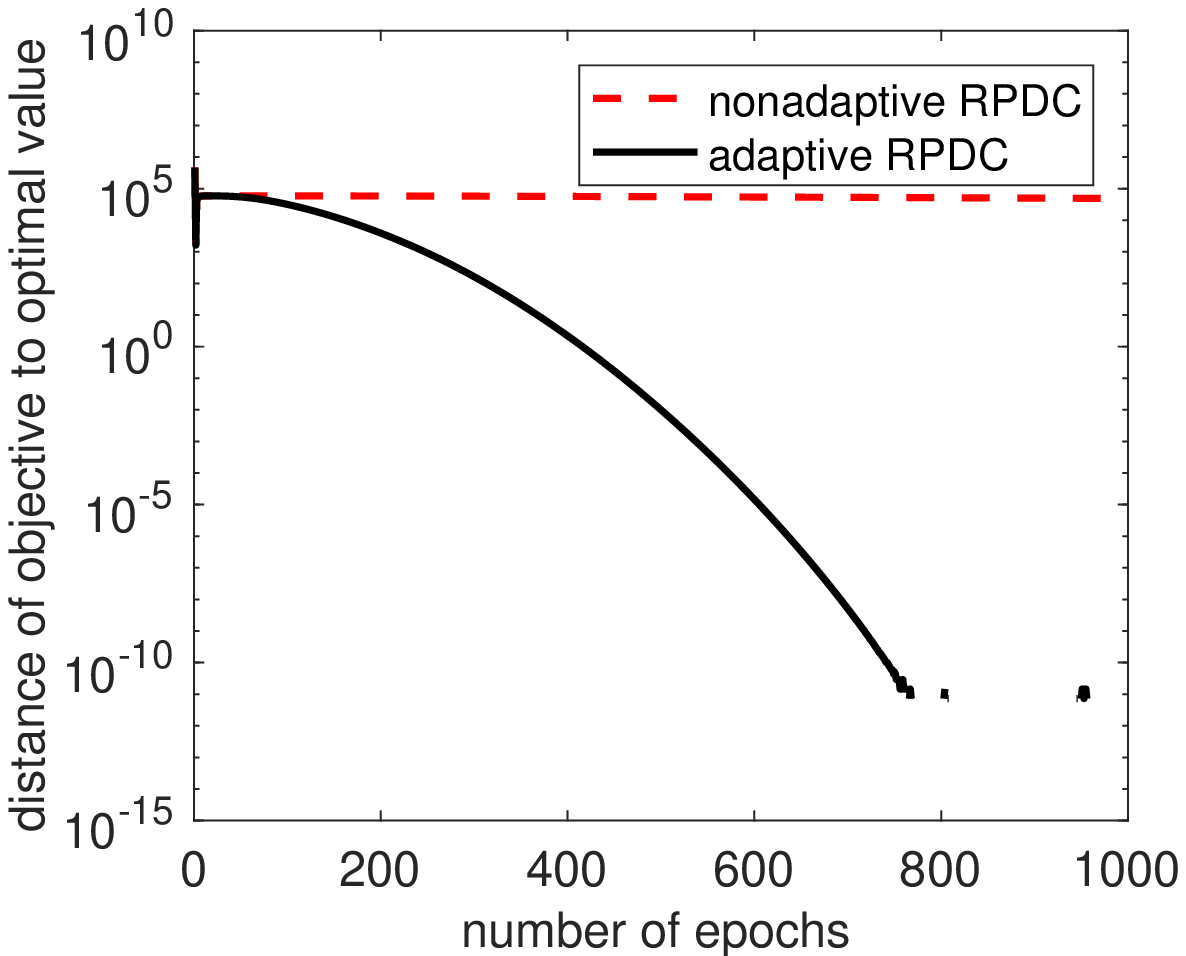}&
\includegraphics[width=0.2\textwidth]{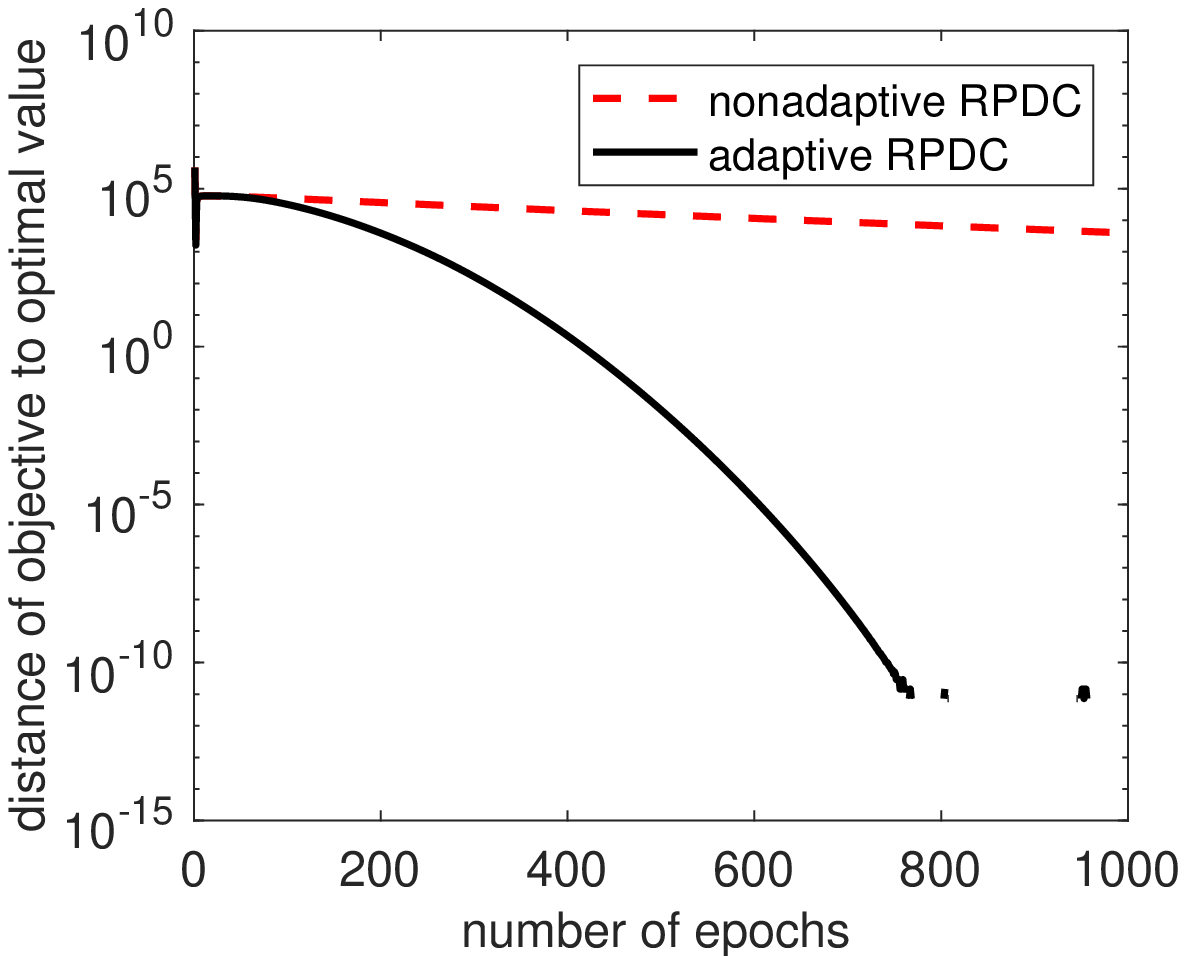}&
\includegraphics[width=0.2\textwidth]{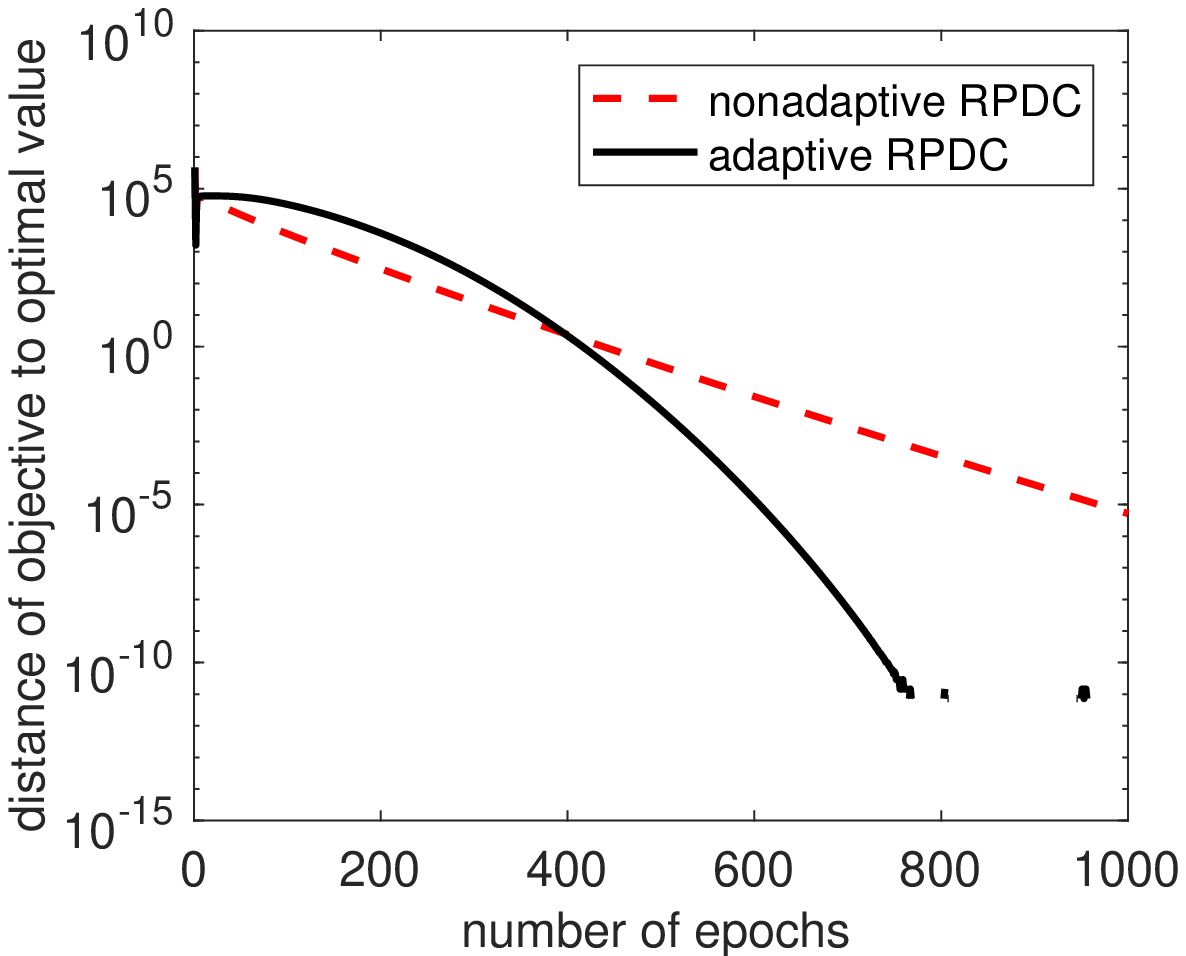}&
\includegraphics[width=0.2\textwidth]{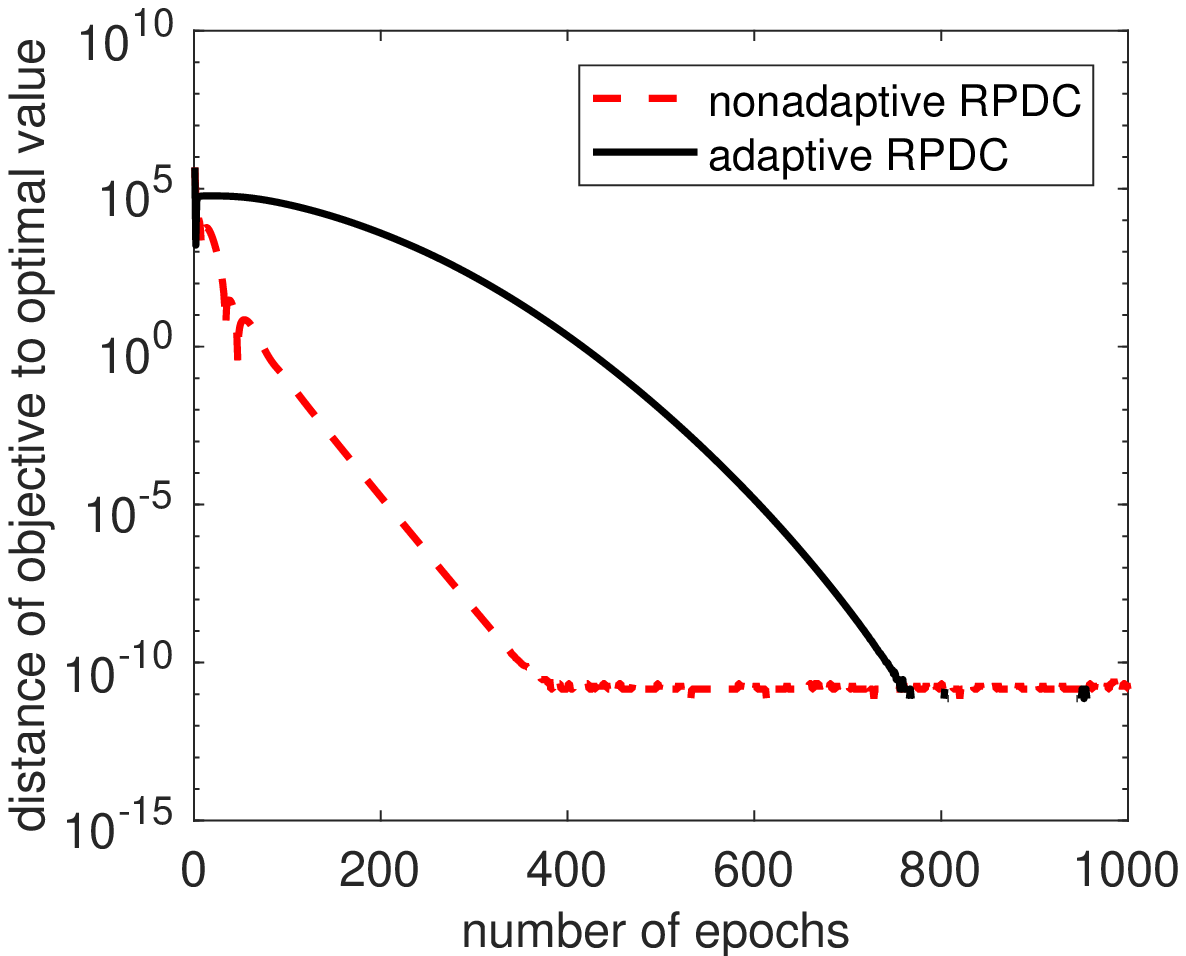}\\
\includegraphics[width=0.2\textwidth]{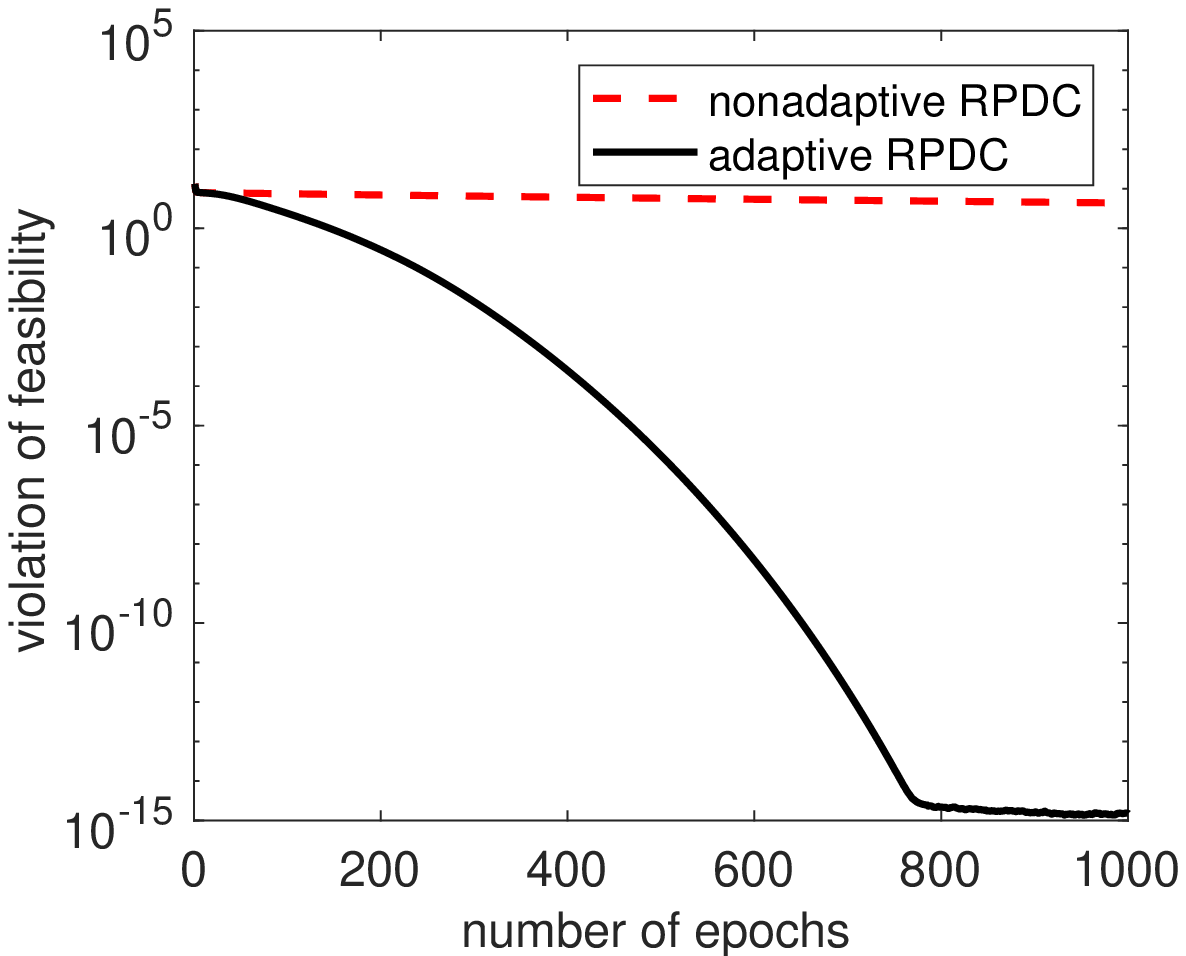}&
\includegraphics[width=0.2\textwidth]{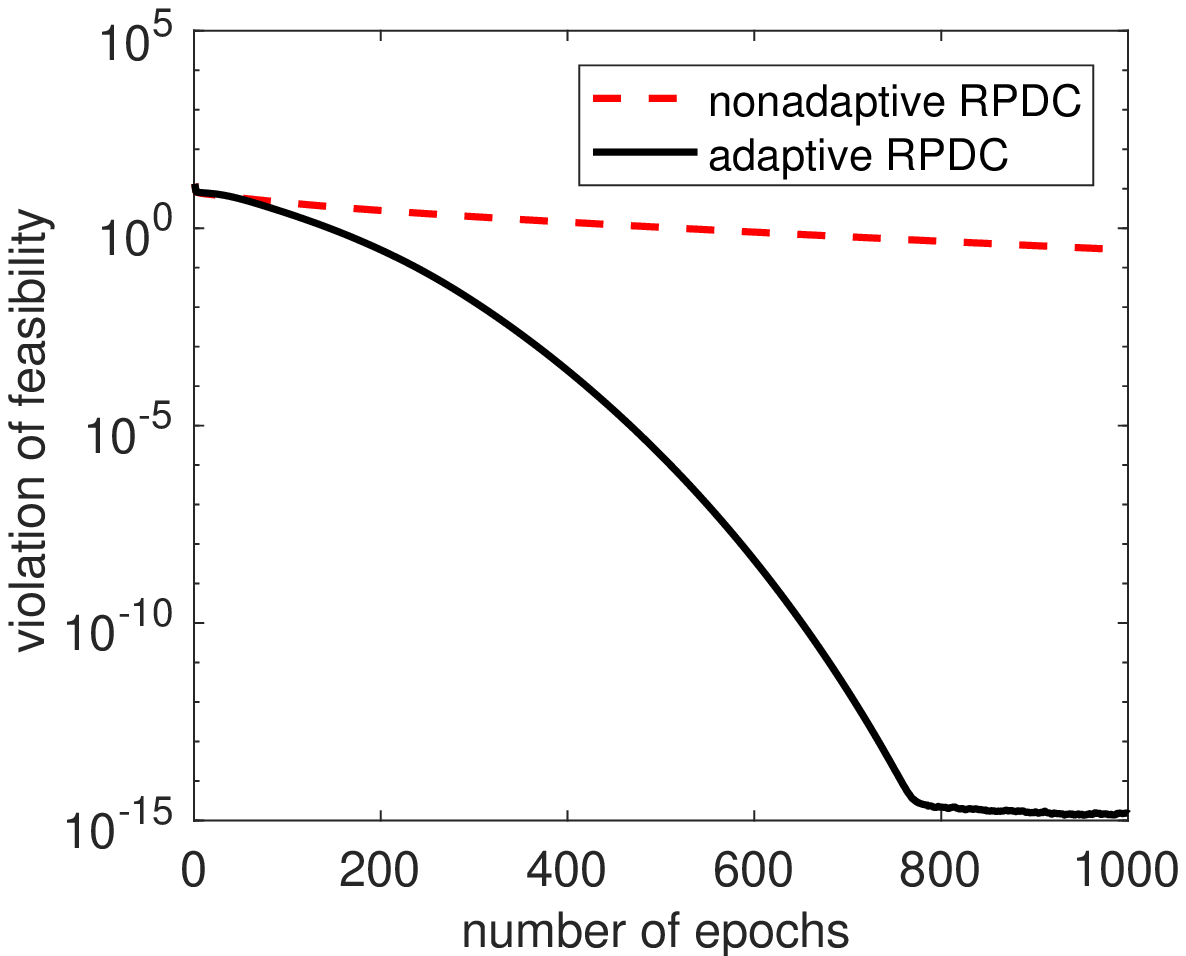}&
\includegraphics[width=0.2\textwidth]{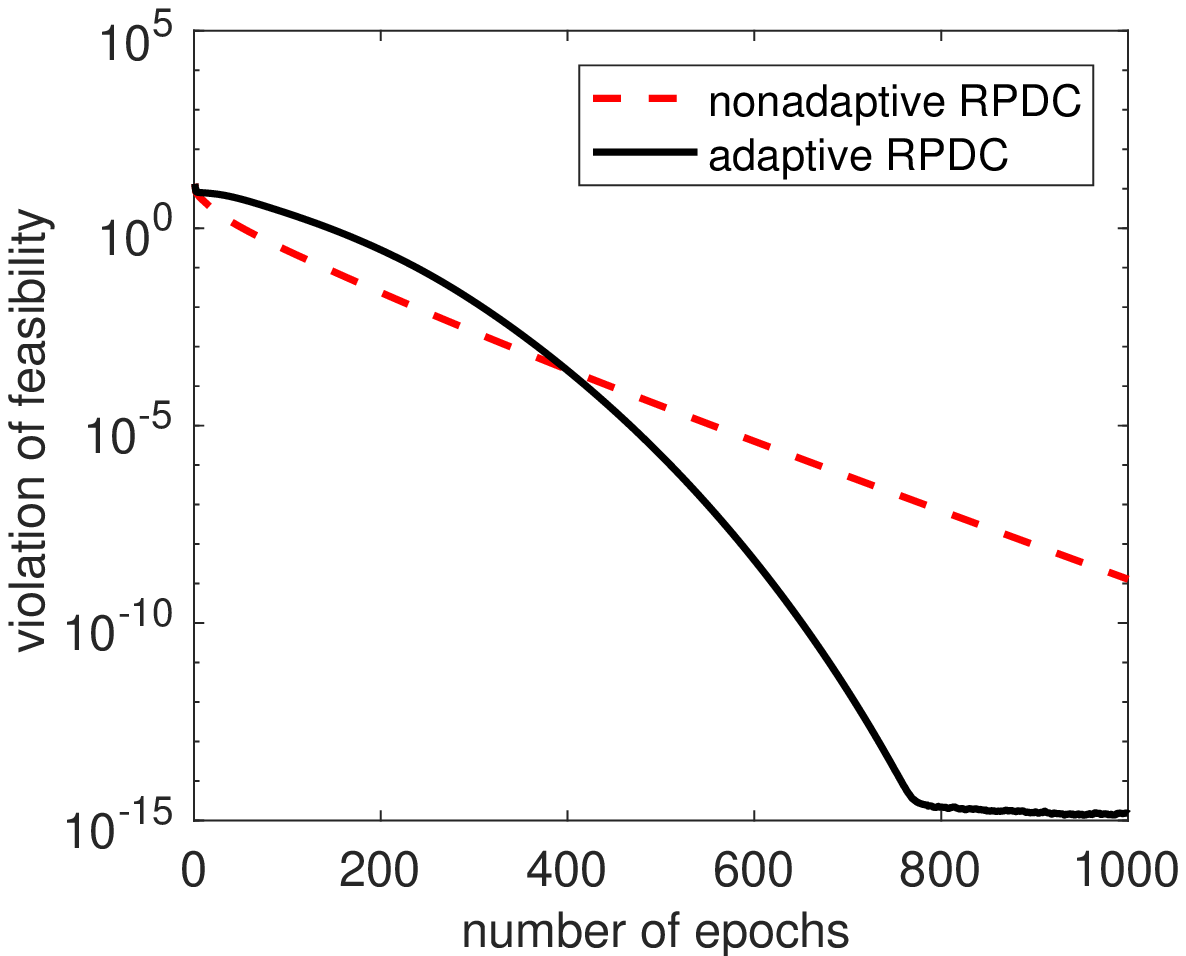}&
\includegraphics[width=0.2\textwidth]{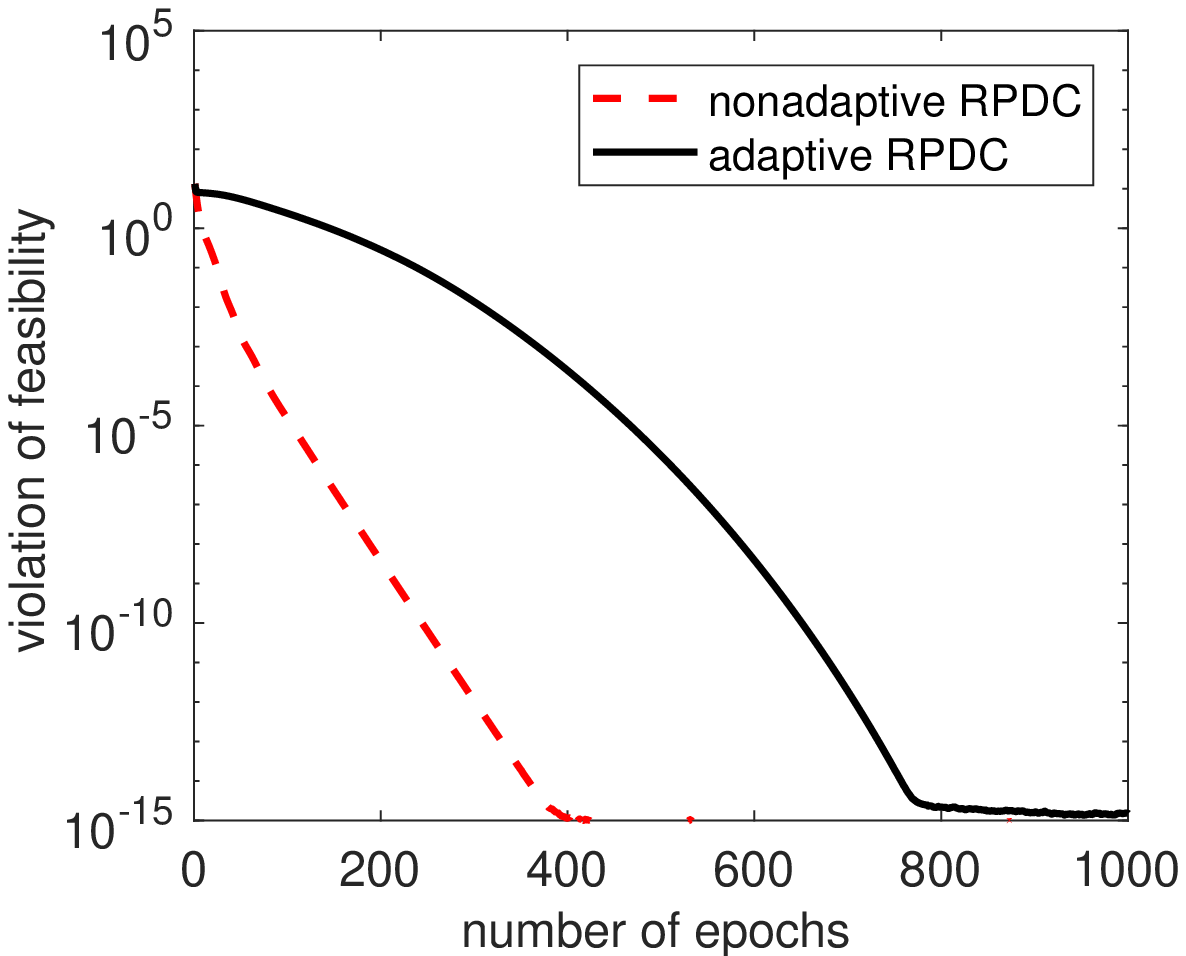}
\end{tabular}
\end{center}
\caption{Results by Algorithm \ref{alg:arpdc} with adaptive parameters and nonadaptive parameters for solving \eqref{eq:qp} with problem size $n=2000, p=200$ and condition number 1000. The latter uses different penalty parameter $\beta$. Top row: difference of objective value to the optimal value $|F(x^k)-F(x^*)|$; bottom row: violation of feasibility $\|Ax^k-b\|$.}\label{fig:qp-p200-c1000}
\end{figure}

{
\textbf{Linear programming.} In this test, we apply Algorithm \ref{alg:rpdc-lin} to the problem \eqref{eq:log-b-lp}, where we let $f(x)=c^\top x, g(x)=-e^\top \log x$ and $h(y)= -e^\top \log y$. The purpose of this experiment is to demonstrate the linear convergence of Algorithm \ref{alg:rpdc-lin}. 

We generated $A\in\RR^{200\times 2000}$ and $c$ according to the standard Gaussian distribution and $b$ by the uniform distribution on $[\frac{1}{2},\frac{3}{2}]$. The upper bound was set to $u_i=10,\forall i$. We treated $x$ as a single block and set the algorithm parameters to $\beta=0.1$, $\eta_x=\beta\|A\|_2^2$, and $\eta_y=\beta\big(1+\frac{2.001\beta}{3\mu}\big)$. This setting satisfies the conditions required in Theorem \ref{thm-linear} if $\alpha$ is sufficiently close to 1.
Note that $g$ and $h$ do not have uniform strong convexity constants but they are both strongly convex on a bounded set.  Figure \ref{fig:lp} shows the convergence behavior of Algorithm \ref{alg:rpdc-lin}. From the figure, we can clearly see that the algorithm linearly converges to an optimal solution.
}

\begin{figure}[h]
\begin{center}
\includegraphics[width=0.32\textwidth]{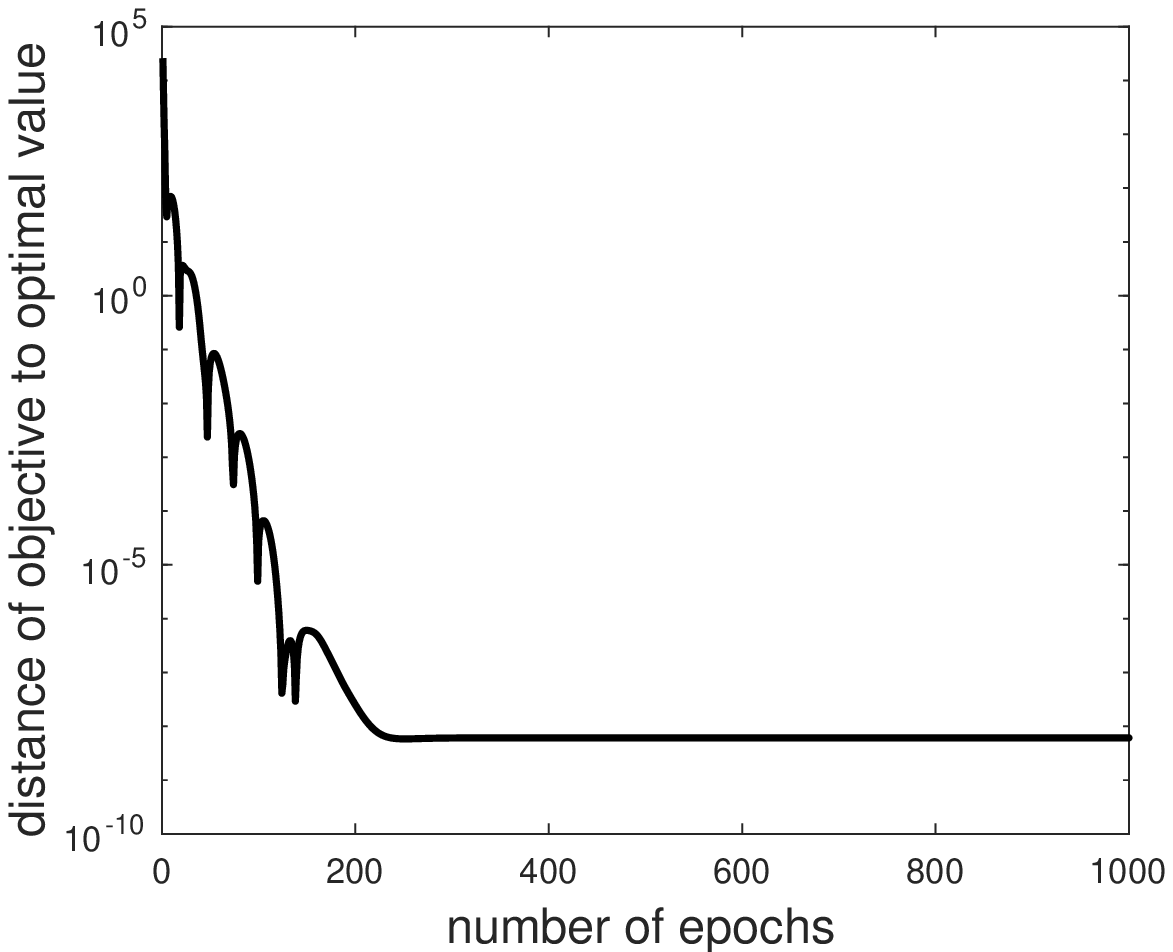}
\includegraphics[width=0.32\textwidth]{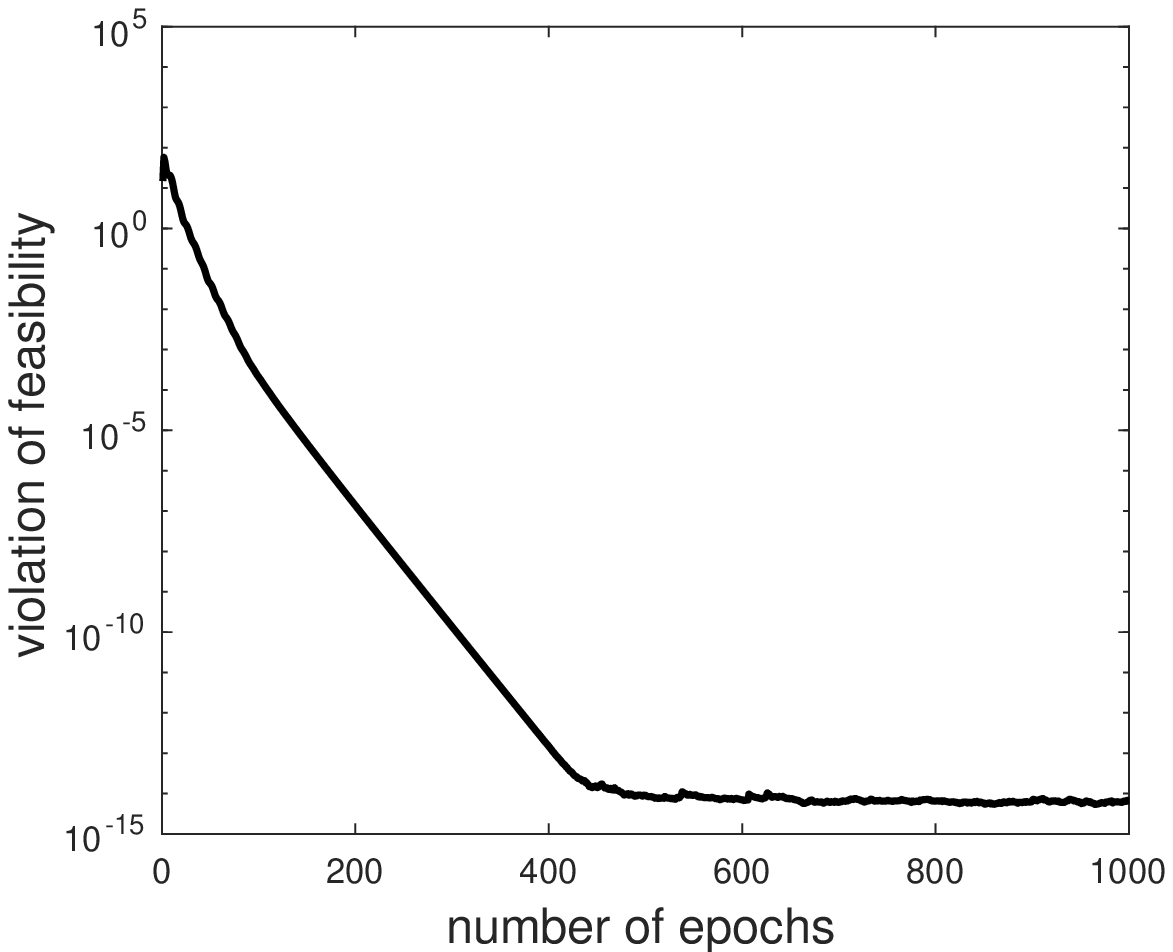}
\end{center}
\caption{Results by Algorithm \ref{alg:rpdc-lin} on the problem \eqref{eq:log-b-lp} with $A\in \RR^{200\times 2000}$. Left: difference of objective value to the optimal value $|F(x^k)+h(y^k)-F(x^*)-h(y^*)|$; Right: violation of feasibility $\|Ax^k+By^k-b\|$}\label{fig:lp}
\end{figure}

\section{Conclusions}\label{sec:conclusion}
In this paper we propose an accelerated proximal Jacobian ADMM method and generalize it to an accelerated randomized primal-dual coordinate updating method for solving linearly constrained multi-block structured convex programs. We show that if the objective is strongly convex then the methods achieve $O(1/t^2)$ convergence rate where $t$ is the total number of iterations. In addition, if one block variable is independent of others in the objective and its part of the objective function is smooth, we have modified the primal-dual coordinate updating method to achieve linear convergence. Numerical experiments on quadratic programming and log-barrier approximation of linear programming have shown the efficacy of the newly proposed methods.

\bibliographystyle{abbrv}
\bibliography{alm,bcd,mblk-adm}


\appendix

\section{Technical proofs: Section \ref{sec:ajadmm}}
In this section, we give the detailed proofs of the lemmas and theorems in section \ref{sec:ajadmm}. The following lemma will be used a few times. Note that when $S=[M]$, the result is deterministic.
\begin{lemma}\label{lem:S-basic-ineq}
Let $S$ be a uniformly selected subset of $[M]$ with cardinality $m$ and $x^o$ be a vector independent of $S$. Suppose $x^+$ is a random vector dependent on $S$ and its coordinates out of $S$ are the same as $x^o$. Let $\beta\in\RR$, $\lambda^o$ and $r^o$ be vectors independent of $S$, and $W$ a positive semidefinite $M\times M$ block diagonal matrix. If 
$$\nabla_{S} f(x^o)+\tilde{\nabla}g_S(x_S^+)-A_S^\top(\lambda^o- \beta  r^o)+W_S(x_S^+-x_S^o)=0,$$
 then for any $x$, it holds that
\begin{equation}
\begin{aligned}\label{eq:S-basic-ineq}
&~ \EE_S\left[F(x^+)-F(x)+\frac{\mu}{2}\|x^+-x\|^2-\left\langle A(x^+-x), \lambda^o-\beta r^o\right\rangle\right] \\
\le &~ (1- \theta)\left[F(x^o)-F(x)+\frac{\mu}{2}\|x^o-x\|^2-\big\langle A(x^o-x), \lambda^o-\beta r^o\big\rangle\right]\\
&~-\frac{1}{2}\EE_S\left[\|x^+-x\|_W^2-\|x^o-x\|_W^2+\|x^+-x^o\|_{W-L_m I}^2\right],
\end{aligned}
\end{equation}
where $\theta=\frac{m}{M}$, $L_m$ is given in Assumption \ref{assump:lip-F}, and the expectation is taken on $S$.
\end{lemma}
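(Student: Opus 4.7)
The plan is to combine the first-order optimality condition defining $x^+$ with the block-wise strong convexity of $g$ and the block Lipschitz inequality \eqref{eq:S-lip-ineq} for $f$, and then take expectation over the random set $S$ in order to convert partial-block quantities into full-vector ones and extract the $(1-\theta)$ factor on the right-hand side.

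First I would take the inner product of the given optimality condition with $x_S^+ - x_S$ and invoke the block strong-convexity bound
$$g_S(x_S^+) - g_S(x_S) + \tfrac{\mu}{2}\|x_S^+ - x_S\|^2 \le \langle \tilde{\nabla} g_S(x_S^+), x_S^+ - x_S\rangle,$$
which holds because $g=\sum g_i$ is separable. Splitting $\langle \nabla_S f(x^o), x_S^+ - x_S\rangle$ as $\langle \nabla_S f(x^o), x_S^+ - x_S^o\rangle + \langle \nabla_S f(x^o), x_S^o - x_S\rangle$ and applying \eqref{eq:S-lip-ineq} (valid since $x^+$ and $x^o$ differ only on $S$) to lower-bound the first piece by $f(x^+) - f(x^o) - \tfrac{L_m}{2}\|x^+ - x^o\|^2$, together with the identity $g_S(x_S^+) - g_S(x_S) = g(x^+) - g(x) - (g_{S^c}(x_{S^c}^o) - g_{S^c}(x_{S^c}))$, yields after rearrangement
\begin{align*}
&F(x^+) - F(x) + \tfrac{\mu}{2}\|x_S^+ - x_S\|^2 - \langle A_S^\top(\lambda^o - \beta r^o), x_S^+ - x_S\rangle \\
&\qquad \le f(x^o) - f(x) + g_{S^c}(x^o_{S^c}) - g_{S^c}(x_{S^c}) - \langle \nabla_S f(x^o), x_S^o - x_S\rangle \\
&\qquad\quad + \tfrac{L_m}{2}\|x^+ - x^o\|^2 - \langle W_S(x_S^+ - x_S^o), x_S^+ - x_S\rangle.
\end{align*}

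Next I would take expectation over the uniformly chosen $S$, using the basic facts $\EE_S[\langle u_S, v_S\rangle] = \theta\langle u, v\rangle$ and $\EE_S[\langle u_{S^c}, v_{S^c}\rangle] = (1-\theta)\langle u, v\rangle$ for $u, v$ independent of $S$, together with the decomposition $\|x^+ - x\|_W^2 = \|x_S^+ - x_S\|_{W_S}^2 + \|x_{S^c}^o - x_{S^c}\|_{W_{S^c}}^2$ (which holds because $x^+$ and $x^o$ agree on $S^c$ and $W$ is block diagonal). This turns the partial-$S$ quantities on the left into $\EE_S\|x^+ - x\|^2 - (1-\theta)\|x^o - x\|^2$ and $\EE_S\langle A(x^+ - x), \lambda^o - \beta r^o\rangle - (1-\theta)\langle A(x^o - x), \lambda^o - \beta r^o\rangle$; moving the residual pieces to the right produces $\tfrac{\mu(1-\theta)}{2}\|x^o - x\|^2 - (1-\theta)\langle A(x^o - x), \lambda^o - \beta r^o\rangle$. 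The leftover term $-\theta\EE_S\langle \nabla_S f(x^o), x_S^o - x_S\rangle = -\theta\langle \nabla f(x^o), x^o - x\rangle$ is then bounded above by $-\theta(f(x^o) - f(x))$ via convexity of $f$, which combined with the lingering $f(x^o) - f(x)$ and $(1-\theta)(g(x^o) - g(x))$ gives exactly $(1-\theta)(F(x^o) - F(x))$.

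Finally, identity \eqref{uv-cross} applied with $a = x_S^+,\, b = x_S^o,\, c = x_S$ gives
$$-\langle W_S(x_S^+ - x_S^o), x_S^+ - x_S\rangle = \tfrac{1}{2}\bigl(\|x_S^o - x_S\|_{W_S}^2 - \|x_S^+ - x_S\|_{W_S}^2 - \|x_S^+ - x_S^o\|_{W_S}^2\bigr),$$
and merging this with $\tfrac{L_m}{2}\|x^+ - x^o\|^2 = \tfrac{L_m}{2}\|x_S^+ - x_S^o\|^2$ and taking expectation collapses, after the cancellation $\tfrac{\theta}{2}\|x^o - x\|_W^2 + \tfrac{1-\theta}{2}\|x^o - x\|_W^2 = \tfrac{1}{2}\|x^o - x\|_W^2$, into precisely the bracket $-\tfrac{1}{2}\EE_S[\|x^+ - x\|_W^2 - \|x^o - x\|_W^2 + \|x^+ - x^o\|_{W - L_m I}^2]$ appearing in \eqref{eq:S-basic-ineq}. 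The main obstacle is the bookkeeping in the expectation step: one must consistently decompose every partial-$S$ norm and inner product so that the stray $S^c$-contributions conspire to produce the clean $(1-\theta)$ factor multiplying $F(x^o) - F(x) + \tfrac{\mu}{2}\|x^o - x\|^2 - \langle A(x^o - x), \lambda^o - \beta r^o\rangle$, while the $W$-cross term collapses cleanly against the $L_m$ term.
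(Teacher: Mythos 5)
Your proposal is correct and follows essentially the same route as the paper's proof: pairing the optimality condition with $x_S^+-x_S$, invoking the block Lipschitz inequality \eqref{eq:S-lip-ineq} and convexity of $f$, strong convexity of $g$, the expectation identities that produce the $\theta$ and $1-\theta$ factors, and the three-point identity \eqref{uv-cross} for the $W$-term. The only difference is minor bookkeeping (you decompose $g_S(x_S^+)-g_S(x_S)$ through the complement $S^c$ rather than through $g(x^+)-g(x^o)+g_S(x_S^o)-g_S(x_S)$ as the paper does), which is equivalent.
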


\begin{proof}
For any $x$, we have
$$\left\langle x_S^+-x_S, \nabla_{S} f(x^o)+\tilde{\nabla}g_S(x_S^+)-A_S^\top(\lambda^o- \beta  r^o)+W_S(x_S^+-x_S^o)\right\rangle=0.$$
We split the left hand side of the above equation into four terms and bound each of them as below. First, we have 
\begin{align}\label{eq:S-f-term}
&~\EE_S\left\langle x^+_S-x_S, \nabla_{S} f(x^o)\right\rangle\cr
=&~\EE_S\left\langle x^+-x^o, \nabla f(x^o)\right\rangle + \EE_S\left\langle x^o_S-x_S, \nabla_S f(x^o)\right\rangle\cr
\ge & ~\EE_S \left[f(x^+)-f(x^o)-\frac{L_m}{2}\|x^+-x^o\|^2\right] + \theta[f(x^o)-f(x)]\cr
=&~ \EE_S\left[f(x^+)-f(x)-\frac{L_m}{2}\|x^+-x^o\|^2\right] - (1- \theta)[f(x^o)-f(x)],
\end{align} 
where the first equality uses the fact $x_i^+=x_i^o,\,\forall i\not\in S$, and the inequality follows from the uniform distribution of $S$, the convexity of $f$, and also the inequality \eqref{eq:S-lip-ineq}.

Secondly, it follows from the strong convexity of $g$ that
\begin{equation}\label{eq:S-g-term-1}
\left\langle x^+_S-x_S, \tilde{\nabla} g_S(x_S^+)\right\rangle\ge g_S(x_S^+) - g_S(x_S) + \sum_{i\in S}\frac{\mu}{2}\|x_i^+-x_i\|^2.
\end{equation}
Since $g_S(x_S^+) - g_S(x_S)=g(x^+) - g(x^o) + g_S(x_S^o)- g_S(x_S)$ and
$\EE_S[g_S(x_S^o)- g_S(x_S)]=\theta [g(x^o)-g(x)]$, we have
\begin{align}\label{eq:S-g-term-2}
\EE_S[g_S(x_S^+) - g_S(x_S)]=&~\EE_S[g(x^+) - g(x^o)]+\theta [g(x^o)-g(x)] \cr
=&~\EE_S[g(x^+)-g(x)]-(1-\theta) [g(x^o)-g(x)].
\end{align}
Similarly, it holds
$\EE_S\sum_{i\in S}\frac{\mu}{2}\|x_i^+-x_i\|^2=\frac{\mu}{2}\left(\EE_S\|x^+-x\|^2-(1-\theta)\|x^o-x\|^2\right).$
Hence, taking expectation on both sides of \eqref{eq:S-g-term-1} yields
\begin{align}\label{eq:S-g-term}
&~\EE_S\left\langle x^+_S-x_S, \tilde{\nabla} g_S(x_S^+)\right\rangle\cr
\ge &~\EE_S \left[ g(x^+) - g(x)+\frac{\mu}{2}\|x^+-x\|^2\right]-(1-\theta)\left[ g(x^o) - g(x)+\frac{\mu}{2}\|x^o-x\|^2\right]. 
\end{align}

Thirdly, by essentially the same arguments on showing \eqref{eq:S-g-term-2}, we have
\begin{equation}\label{eq:S-lam-term}
\EE_S \left\langle x^+_S-x_S, -A_S^\top (\lambda^o-\beta r^o)\right\rangle = -\EE_S \left\langle A(x^+-x), \lambda^o-\beta r^o\right\rangle + (1-\theta) \big\langle A(x^o-x), \lambda^o-\beta r^o\big\rangle.
\end{equation}
Fourth, note $\left\langle x^+_S-x_S, W_S(x_S^+-x_S^o)\right\rangle=\left\langle x^+-x, W(x^+-x^o)\right\rangle$, and thus by \eqref{uv-cross},
\begin{equation}\label{eq:S-P-term}
\EE_S\left\langle x^+_S-x_S, W_S(x_S^+-x_S^o)\right\rangle=\frac{1}{2}\EE_S\left[\|x^+-x\|_W^2-\|x^o-x\|_W^2+\|x^+-x^o\|_W^2\right].
\end{equation}
The desired result is obtained by adding \eqref{eq:S-f-term}, \eqref{eq:S-g-term}, \eqref{eq:S-lam-term}, and \eqref{eq:S-P-term}, and recalling $F=f+g$.
\end{proof}

\subsection{Proof of Lemma \ref{lem:1iter-ajadmm}}
From \eqref{eq:ajadmm-x}, we have the optimality condition
$$\nabla f(x^k)-A^\top(\lambda^k-\beta_kr^k)+\tilde{\nabla} g(x^{k+1})+P^k(x^{k+1}-x^k)=0.$$
Hence, for any $x$ such that $Ax=b$, it follows from the definition of $\Phi$ in \eqref{eq:nota-Phi} and Lemma \ref{lem:S-basic-ineq} with $S=[M]$, $x^o=x^k$, $\lambda^o=\lambda^k$, $\beta=\beta_k$, $x^+=x^{k+1}$, and $W=P^k$ that
\begin{align}\label{eq:opt-ineq2}
\Phi(x^{k+1},x,\lambda)
\le &~ \left\langle Ax^{k+1}-b,\lambda^k-\beta_kr^k\right\rangle - \left\langle Ax^{k+1}-b, \lambda\right\rangle\cr
&~-\frac{1}{2}\EE_S\left[\|x^{k+1}-x\|_{P^k+\mu I}^2-\|x^k-x\|_{P^k}^2+\|x^{k+1}-x^k\|_{P^k-L_f I}^2\right].
\end{align}
Using the fact $\lambda^{k+1}=\lambda^k-\rho_k(Ax^{k+1}-b)$, we have
\begin{align}\label{eq:opt-ineq2-1}
\left\langle Ax^{k+1}-b,\lambda^k-\lambda\right\rangle=&~\frac{1}{\rho_k}\left\langle\lambda^k-\lambda^{k+1},\lambda^k-\lambda\right\rangle\cr
\overset{\eqref{uv-cross}}=&~\frac{1}{2\rho_k}\left[\|\lambda-\lambda^k\|^2-\|\lambda-\lambda^{k+1}\|^2+\|\lambda^k-\lambda^{k+1}\|^2\right].
\end{align}
In addition, we write $r^k=r^k-r^{k+1}+r^{k+1}=r^{k+1}-A(x^{k+1}-x^k)$ and have
\begin{align}\label{eq:opt-ineq2-2}
&~\left\langle Ax^{k+1}-b,-\beta_k r^k\right\rangle\cr
=&~-\beta_k\|r^{k+1}\|^2+\beta_k\left\langle A(x^{k+1}-x), A(x^{k+1}-x^k)\right\rangle\cr
\overset{\eqref{uv-cross}}=&~-\beta_k\|r^{k+1}\|^2+\frac{\beta_k}{2}\left[\|A(x^{k+1}-x)\|^2-\|A(x^k-x)\|^2+\|A(x^{k+1}-x^k)\|^2\right]
\end{align}
Substituting \eqref{eq:opt-ineq2-1} and \eqref{eq:opt-ineq2-2} into \eqref{eq:opt-ineq2} gives the inequality in \eqref{eq:1iter-ajadmm}.

\subsection{Proof of Theorem \ref{thm:ajadmm-g}}
First, we have 
\begin{eqnarray}
& & \sum_{k=1}^t\frac{k+k_0+1}{2\rho_k}\left[\|\lambda-\lambda^k\|^2-\|\lambda-\lambda^{k+1}\|^2\right]\nonumber \\
&=& \frac{k_0+2}{2\rho_1}\|\lambda-\lambda^1\|^2-\frac{t+k_0+1}{2\rho_t}\|\lambda-\lambda^{t+1}\|^2+\sum_{k=2}^t
\left(\frac{k+k_0+1}{2\rho_k}-\frac{k+k_0}{2\rho_{k-1}}\right)\|\lambda-\lambda^k\|^2 \nonumber \\
&\overset{\eqref{eq:ajadmm-para-2}}\le& \frac{k_0+2}{2\rho_1}\|\lambda-\lambda^1\|^2 . \label{eq:ajadmm-g-lam}
\end{eqnarray}
In addition, 
\begin{eqnarray}
& &-\sum_{k=1}^t\frac{k+k_0+1}{2}\left(\|x^{k+1}-x\|_{P^k-\beta_k A^\top A+\mu I}^2-\|x^k-x\|_{P^k-\beta_k A^\top A}^2\right)\nonumber \\
&=& \frac{k_0+2}{2}\|x^1-x\|_{P^1-\beta_1 A^\top A}^2-\frac{t+k_0+1}{2}\|x^{t+1}-x\|_{P^t-\beta_t A^\top A+\mu I}^2\nonumber \\
& & +\frac{1}{2}\sum_{k=2}^t\left((k+k_0+1)\|x^k-x\|_{P^k-\beta_k A^\top A}^2-(k+k_0)\|x^k-x\|_{P^{k-1}-\beta_{k-1}A^\top A+\mu I}^2\right)\nonumber \\
&\overset{\eqref{eq:ajadmm-para-3}}\le & ~\frac{k_0+2}{2}\|x^1-x\|_{P^1-\beta_1 A^\top A}^2-\frac{t+k_0+1}{2}\|x^{t+1}-x\|_{P^t-\beta_t A^\top A+\mu I}^2. \label{eq:ajadmm-g-x}
\end{eqnarray}
Now multiplying $k+k_0+1$ to both sides of \eqref{eq:1iter-ajadmm} and adding it over $k$, we obtain \eqref{eq:ajadmm-rate-g} by using \eqref{eq:ajadmm-g-lam} and \eqref{eq:ajadmm-g-x}, and noting $\|\lambda^k-\lambda^{k+1}\|^2=\rho_k^2\|r^{k+1}\|^2$ and $\|x^{k+1}-x^k\|_{P^k-\beta_k A^\top A - L_f I}^2 \ge 0$.

\subsection{Proof of Theorem \ref{thm:ajadmm-spc}}
From the choice of $k_0$ and the condition $P-\beta A^\top A \preceq \frac{\mu}{2} I$, it is not difficult to verify 
$$(k+k_0+1)\left[kP-k\beta A^\top A+L_f I\right]\preceq (k+k_0)\left[(k-1)P-(k-1)\beta A^\top A+(L_f+\mu)I\right],\,\forall k\ge 1.$$
Hence, the condition in \eqref{eq:ajadmm-para-3} holds. In addition, it is easy to see that all conditions in \eqref{eq:ajadmm-para-1} and \eqref{eq:ajadmm-para-2} also hold. Therefore, we have \eqref{eq:ajadmm-rate-g}, which, by taking parameters in \eqref{eq:ajadmm-para-spc} and $x=x^*$, reduces to
\begin{eqnarray}
\sum_{k=1}^t(k+k_0+1)\Phi(x^{k+1},x^*,\lambda) +\sum_{k=1}^t\frac{k(k+k_0+1)}{2}\beta\|r^{k+1}\|^2& &\nonumber \\
 +\frac{t+k_0+1}{2}\|x^{t+1}-x^*\|^2_{t(P-\beta A^\top A)+(L_f+\mu) I}
&\le & \phi_1(x^*,\lambda), \label{eq:ajadmm-rate-spc-1}
\end{eqnarray}
where we have used the fact $\lambda^1=0$.

Letting $\lambda=\lambda^*$, we have from \eqref{eq:1stopt-cond} and \eqref{eq:ajadmm-rate-spc-1} that (by dropping nonnegative $\Phi(x^{k+1},x^*,\lambda^*)$'s):
$$\frac{t(t+k_0+1)}{2}\beta\|r^{t+1}\|^2+\frac{t+k_0+1}{2}\|x^{t+1}-x^*\|^2_{t(P-\beta A^\top A)+(L_f+\mu) I}
\le \phi_1(x^*,\lambda^*),$$
which indicates \eqref{eq:ajadmm-rate-spc}. In addition, from the convexity of $F$ and \eqref{eq:ajadmm-rate-spc-1}, we have that for any $\lambda$, it holds
$\frac{t(t+2k_0+3)}{2}\Phi(\bar{x}^{t+1},x^*,\lambda)\le\phi_1(x^*,\lambda),$
which together with Lemmas \ref{lem:xy-rate} and \ref{lem:equiv-rate} implies \eqref{eq:ajadmm-erg-rate-spc}.

\section{Technical proofs: Section \ref{sec:arpdc}}
In this section, we give the proofs of the lemmas and theorems in section \ref{sec:arpdc}.
\subsection{Proof of Lemma \ref{lem:1iter}}
From the update in \eqref{eq:fw-arpdc-x}, we have the optimality condition: 
\begin{equation}\label{eq:opt-cond}
 \nabla_{S_k} f(x^k)-A_{S_k}^\top(\lambda^k-\beta_k r^k)+\tilde{\nabla} g_{S_k}(x_{S_k}^{k+1})+\eta_k (x_{S_k}^{k+1}-x_{S_k}^k) = 0.
\end{equation}
It follows from the update rule of $\lambda$ that
$$-\langle Ax^{k+1}-b, \lambda^k\rangle = - \langle Ax^{k+1}-b, \lambda^{k+1}\rangle - \rho_k\|r^{k+1}\|^2.$$
Plugging \eqref{eq:opt-ineq2-2} and the above equation into \eqref{eq:S-basic-ineq} with $S=S_k, \lambda^o=\lambda^k, \beta=\beta_k, x^o=x^k$, $x^+=x^{k+1}$, $W=\eta_k I$, and $x$ satisfying $Ax=b$, we have the desired result by taking expectation and recalling the definition of $\Delta$ in \eqref{eq:def-Delta} and $\Phi$ in \eqref{eq:nota-Phi}. 

\subsection{Proof of Theorem \ref{thm:naccl}}
Let $\beta_k=\beta, \rho_k=\rho$ and $\eta_k=\eta$ in \eqref{eq:sum-bd1}, and also note $\mu=0$ and $\eta \ge L_m+\beta\|A\|^2$. We have
\begin{align*}
& ~\EE\left[\Phi(x^{k+1},x,\lambda^{k+1})+(\beta-\rho)\|r^{k+1}\|^2\right]\\
\le & ~(1-\theta)\EE\left[\Phi(x^k,x,\lambda^k)+\beta\| r^k\|^2\right] - \frac{1}{2}\EE\left[\|x^{k+1}-x\|^2_{\eta I-\beta A^\top A}-\|x^{k}-x\|^2_{\eta I-\beta A^\top A}\right]. \nonumber
\end{align*}
Summing the above inequality over $k=1$ through $t$ and noting $\rho\le \theta \beta$ give
\begin{align}
& ~\EE\left[\Phi(x^{t+1},x,\lambda^{t+1})+(\beta-\rho)\|r^{t+1}\|^2\right] + \theta \sum_{k=1}^{t-1}\EE\Phi(x^{k+1},x,\lambda^{k+1})\label{eq:sum-bd1-prf} \\
\le & ~(1-\theta)\EE\left[\Phi(x^1,x,\lambda^1)+\beta\| r^1\|^2\right] + \frac{1}{2}\|x^1-x\|^2_{\eta I-\beta A^\top A}. \nonumber
\end{align}
By the update of $\lambda$, it follows that
\begin{align}\label{eq:sum-bd1-prf2}
\theta\sum_{k=1}^{t-1} \Phi(x^{k+1},x,\lambda^{k+1})=&~\theta\sum_{k=1}^{t-1} \left[\Phi(x^{k+1},x,\lambda)+\frac{1}{\rho} \langle\lambda^{k+1}-\lambda, \lambda^{k+1}-\lambda^k\rangle\right]\cr
=&~\theta\sum_{k=1}^{t-1} \Phi(x^{k+1},x,\lambda)+\frac{\theta}{2\rho}\sum_{k=1}^{t-1} \left[\|\lambda^{k+1}-\lambda\|^2-\|\lambda^{k}-\lambda\|^2+\|\lambda^{k+1}-\lambda^k\|^2\right]\cr
=&~\theta\sum_{k=1}^{t-1} \Phi(x^{k+1},x,\lambda)+\frac{\theta}{2\rho}\left[\|\lambda^{t}-\lambda\|^2-\lambda^1-\lambda\|^2+\sum_{k=1}^{t-1}\|\lambda^{k+1}-\lambda^k\|^2\right]
\end{align}
and
\begin{align}\label{eq:sum-bd1-prf3}
\Phi(x^{t+1},x,\lambda^{t+1}) = &~\Phi(x^{t+1},x,\lambda) - \langle \lambda^t - \lambda - \rho r^{t+1}, r^{t+1}\rangle\cr
=&~\Phi(x^{t+1},x,\lambda)- \langle \lambda^t - \lambda, r^{t+1}\rangle+\rho \|r^{t+1}\|^2.
\end{align}
Since $\rho \le \theta\beta$, by Young's inequality, it holds $$\beta\|r^{t+1}\|^2 - \langle \lambda^t - \lambda, r^{t+1}\rangle + \frac{\theta}{2\rho}\|\lambda^{t}-\lambda\|^2 \ge0.$$
Then plugging \eqref{eq:sum-bd1-prf2} and \eqref{eq:sum-bd1-prf3} into \eqref{eq:sum-bd1-prf}, we have
\begin{align}\label{eq:sum-bd1-prf4}
&~\EE\Phi(x^{t+1},x,\lambda) +\theta\sum_{k=1}^{t-1} \EE \Phi(x^{k+1},x,\lambda)\cr
\le &~(1-\theta)\EE\left[\Phi(x^1,x,\lambda^1)+\beta\| r^1\|^2\right] + \frac{1}{2}\|x^1-x\|^2_{\eta I-\beta A^\top A} + \frac{\theta}{2\rho}\EE\|\lambda^1-\lambda\|^2\cr
\le &~ \EE\phi_2(x,\lambda),
\end{align} 
where in the last inequality we have used $\lambda^1=0$, $\theta>0$ and $\| r^1\|^2=\|x^1-x\|^2_{\beta A^\top A}$.

Therefore, from the convexity of $F$, it follows that $\EE \Phi(\bar{x}^{t},x^*,\lambda) \le \frac{1}{1+\theta (t-1)}\EE\phi_2(x^*,\lambda),\,\forall \lambda$, and we obtain the desired result from Lemmas \ref{lem:xy-rate} and \ref{lem:equiv-rate}.

\subsection{Proof of Theorem \ref{thm:rate0}}
We first establish a few inequalities below. 

\begin{proposition}\label{prop2} If \eqref{eq:cond5}, \eqref{eq:cond6} and \eqref{eq:cond7} hold, then
\begin{eqnarray}
& &-\sum_{k=1}^t(k+k_0+1) \EE\left[\Delta_{\eta_k I-\beta_k A^\top A}(x^{k+1},x^k,x)-\frac{L_m}{2}\|x^{k+1}-x^k\|^2\right]\cr
& &-\frac{\mu(t+k_0+1)}{2}\EE\|x^{t+1}-x\|^2-\sum_{k=2}^t\frac{\mu\big(\theta(k+k_0+1)-1\big)}{2}\EE\|x^k-x\|^2\cr
&\le &\frac{\eta_1(k_0+2)}{2}\EE\|x^1-x\|^2-\frac{(t+k_0+1)}{2}\EE\|x^{t+1}-x\|^2_{(\mu+\eta_t)I-\beta_t A^\top A}. \label{eq:sum-xterm}
\end{eqnarray}
\end{proposition}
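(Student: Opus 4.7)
The plan is to expand the $\Delta_{\eta_k I - \beta_k A^\top A}$ terms using the definition in \eqref{eq:def-Delta}, then rearrange using an Abel summation (index shift) on the telescoping-like $\|\cdot-x\|^2$ pieces, and finally verify that each residual coefficient is non-positive by invoking conditions \eqref{eq:cond5}, \eqref{eq:cond6}, and \eqref{eq:cond7}.

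More precisely, after expansion, the expression on the left-hand side of \eqref{eq:sum-xterm} is a linear combination of four types of terms: (i) $(k+k_0+1)\|x^{k+1}-x\|^2_{\eta_k I-\beta_k A^\top A}$ (with a minus sign), (ii) $(k+k_0+1)\|x^k-x\|^2_{\eta_k I-\beta_k A^\top A}$ (with a plus sign), (iii) $(k+k_0+1)\|x^{k+1}-x^k\|^2_{\eta_k I-\beta_k A^\top A}$ (with a minus sign) combined with $(k+k_0+1)\frac{L_m}{2}\|x^{k+1}-x^k\|^2$ (with a plus sign), and (iv) the two explicit $\mu$-terms. I first handle (iii): condition \eqref{eq:cond6} gives $(\eta_k-L_m)I\succeq \beta_k A^\top A$, so the combined contribution is non-positive and can be dropped.

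Next I shift index $k\mapsto k-1$ in the sum of type (i), which turns it into a sum over $k=2,\ldots,t+1$ with weight $(k+k_0)(\eta_{k-1}I-\beta_{k-1}A^\top A)$. Combining with (ii), the $k=1$ term contributes $\frac{k_0+2}{2}\|x^1-x\|^2_{\eta_1 I-\beta_1 A^\top A}$, which is bounded by $\frac{\eta_1(k_0+2)}{2}\|x^1-x\|^2$ since $\beta_1 A^\top A\succeq 0$; the $k=t+1$ term contributes $-\frac{t+k_0+1}{2}\|x^{t+1}-x\|^2_{\eta_t I-\beta_t A^\top A}$, which combines with the $-\frac{\mu(t+k_0+1)}{2}\|x^{t+1}-x\|^2$ piece to produce exactly the $-\frac{t+k_0+1}{2}\|x^{t+1}-x\|^2_{(\mu+\eta_t)I-\beta_t A^\top A}$ on the right-hand side; and for each intermediate $2\le k\le t$ the net weight on $\|x^k-x\|^2$ is
\[
\tfrac{1}{2}\bigl[(k+k_0+1)(\eta_k I-\beta_k A^\top A)-(k+k_0)(\eta_{k-1}I-\beta_{k-1}A^\top A)-\mu\bigl(\theta(k+k_0+1)-1\bigr)I\bigr].
\]
Conditions \eqref{eq:cond5} and \eqref{eq:cond7} together say exactly that this matrix is PSD-dominated by zero: \eqref{eq:cond5} gives $-(k+k_0+1)\beta_k A^\top A\preceq -(k+k_0)\beta_{k-1}A^\top A$, while \eqref{eq:cond7} gives $(k+k_0+1)\eta_k\le (k+k_0)\eta_{k-1}+\mu(\theta(k+k_0+1)-1)$, so the intermediate terms drop out as non-positive.

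Taking expectations at the end and collecting the two surviving pieces produces the inequality \eqref{eq:sum-xterm}. The only real step requiring care is the index-shift bookkeeping, since the residual intermediate coefficient has to be matched exactly against the combination of \eqref{eq:cond5} and \eqref{eq:cond7}; everything else is routine once the $\|x^{k+1}-x^k\|^2$ block has been eliminated via \eqref{eq:cond6}.
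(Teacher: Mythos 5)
Your proof is correct and follows essentially the same route as the paper's: expand the $\Delta_{\eta_k I-\beta_k A^\top A}$ terms, shift the index so that the coefficient matrix of each $\frac{1}{2}\|x^{k+1}-x^k\|^2$ is seen to be negative semidefinite by \eqref{eq:cond6} and that of each intermediate $\frac{1}{2}\|x^k-x\|^2$ by \eqref{eq:cond5} and \eqref{eq:cond7}, with the $k=1$ and $k=t+1$ boundary terms giving exactly the two sides of \eqref{eq:sum-xterm}. The paper states this more tersely, but the argument is identical.
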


\begin{proof}
This inequality can be easily shown by noting that for any $1\le k\le t$, the weight matrix of $\frac{1}{2}\|x^{k+1}-x^k\|^2$ is $ \beta_k(k+k_0+1)A^\top A-(k+k_0+1)(\eta_k-L_m)I$, which is negative semidefinite, and for any $2\le k\le t$, the weight matrix of $\frac{1}{2}\|x^{k}-x\|^2$ is 
$$
\big[\beta_{k-1}(k+k_0)-\beta_k(k+k_0+1)\big]A^\top A+\left[ (k+k_0+1)\eta_k-(k+k_0)\eta_{k-1}-\mu\big(\theta(k+k_0+1)-1\big)\right]I,$$
which is also negative semidefinite.
\end{proof}

\begin{proposition}\label{prop3}
If \eqref{eq:cond1}, \eqref{eq:cond3} and \eqref{eq:cond4} hold, then
\begin{eqnarray}
& &-\frac{t+k_0+1}{\rho_t}\EE\Delta(\lambda^{t+1},\lambda^t,\lambda)-\sum_{k=2}^t\frac{\theta(k+k_0+1)-1}{\rho_{k-1}}\EE\Delta(\lambda^{k},\lambda^{k-1},\lambda)\cr
&\le& 
\frac{\theta(k_0+3)-1}{2\rho_1}\EE\|\lambda^1-\lambda\|^2. \label{eq:sum-lamterm}
\end{eqnarray}
\end{proposition}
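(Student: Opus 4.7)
The plan is to expand each $\Delta$ via the identity
\[
-\Delta(\lambda^{k+1},\lambda^k,\lambda)=\tfrac{1}{2}\!\left[\|\lambda^k-\lambda\|^2-\|\lambda^{k+1}-\lambda\|^2-\|\lambda^{k+1}-\lambda^k\|^2\right],
\]
apply it to every summand on the left-hand side, and then regroup the result by the powers $\|\lambda^j-\lambda\|^2$ to obtain a telescoping structure whose intermediate coefficients are controlled by the parameter conditions \eqref{eq:cond3} and \eqref{eq:cond4}.

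More concretely, after expansion the coefficient of $\|\lambda^j-\lambda\|^2$ for $2\le j\le t-1$ is
\[
\frac{\theta(j+k_0+2)-1}{2\rho_j}-\frac{\theta(j+k_0+1)-1}{2\rho_{j-1}},
\]
which is $\le 0$ precisely by \eqref{eq:cond3}; the coefficient of $\|\lambda^t-\lambda\|^2$ is
\[
\frac{t+k_0+1}{2\rho_t}-\frac{\theta(t+k_0+1)-1}{2\rho_{t-1}},
\]
which is $\le 0$ by \eqref{eq:cond4}; the coefficient of $\|\lambda^{t+1}-\lambda\|^2$ is manifestly negative; and the only remaining (and positive) contribution comes from the $k=2$ term of the sum, yielding $\frac{\theta(k_0+3)-1}{2\rho_1}\|\lambda^1-\lambda\|^2$. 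Next I would handle the quadratic cross terms $-\|\lambda^k-\lambda^{k-1}\|^2$ and $-\|\lambda^{t+1}-\lambda^t\|^2$: each is multiplied by a nonnegative weight (by \eqref{eq:cond1}, which guarantees $\theta(k+k_0+1)-1\ge 0$ for all $k\ge 2$, while $\frac{t+k_0+1}{2\rho_t}>0$ is automatic), so each contributes nonpositively to the LHS and may simply be dropped.

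Combining these observations gives a deterministic inequality of the form
\[
\text{LHS}\le \frac{\theta(k_0+3)-1}{2\rho_1}\|\lambda^1-\lambda\|^2-\frac{t+k_0+1}{2\rho_t}\|\lambda^{t+1}-\lambda\|^2,
\]
and discarding the final nonpositive term and taking expectations yields the stated bound.

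I do not expect any substantial obstacle; the argument is essentially a bookkeeping exercise in summation by parts, with \eqref{eq:cond3}--\eqref{eq:cond4} designed exactly so that each interior coefficient drops out and \eqref{eq:cond1} ensuring the squared-difference terms have the right sign to be safely discarded. The only care required is in keeping the index shift correct when re-labeling the sum so that the $\|\lambda^j-\lambda\|^2$ term at index $j$ receives contributions from the $k=j$ and $k=j+1$ summands.
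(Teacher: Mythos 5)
Your proposal is correct and follows essentially the same route as the paper: expand each $\Delta$ via its definition, collect the coefficients of $\|\lambda^j-\lambda\|^2$ (which are exactly the expressions you write and are nonpositive by \eqref{eq:cond3} and \eqref{eq:cond4}), note that the squared-difference terms carry nonpositive coefficients thanks to \eqref{eq:cond1}, and drop everything except the $k=2$ contribution $\frac{\theta(k_0+3)-1}{2\rho_1}\|\lambda^1-\lambda\|^2$. The bookkeeping, index shifts, and the roles assigned to each parameter condition all match the paper's argument.
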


\begin{proof}
On the left hand side of \eqref{eq:sum-lamterm}, the coefficient of each $\frac{1}{2}\|\lambda^{k+1}-\lambda^k\|^2$ is negative. For $2\le k\le t-1$, the coefficient of $\frac{1}{2}\|\lambda^k-\lambda\|^2$ is  $\frac{\theta(k+k_0+2)-1}{\rho_k}-\frac{\theta(k+k_0+1)-1}{\rho_{k-1}}$, which is nonpositive; the coefficient of $\frac{1}{2}\|\lambda^t-\lambda\|^2$ is
$\frac{t+k_0+1}{\rho_t}-\frac{\theta(t+k_0+1)-1}{\rho_{t-1}}$, which is nonpositive; the coefficient of $\frac{1}{2}\|\lambda^{t+1}-\lambda\|^2$ is also nonpositive. Hence, dropping these nonpositive terms, we have the desired result.
\end{proof}

Now we are ready to prove Theorem \ref{thm:rate0}.
\begin{proof} [of Theorem \ref{thm:rate0}]

Multiplying $k+k_0+1$ to both sides of \eqref{eq:sum-bd1}, summing it up from $k=1$ through $t$, {and moving the terms about $\Phi(x^k,x,\lambda^k)+\frac{\mu}{2}\|x^k-x\|^2$ and $\|r^k\|^2$ to the left hand side for $2\le k\le t$} give
\begin{eqnarray}
& &(t+k_0+1) \EE\left[\Phi(x^{t+1},x,\lambda^{t+1})+(\beta_t-\rho_t)\|r^{t+1}\|^2+ \frac{\mu}{2}\|x^{t+1}-x\|^2\right]\cr
& &+\sum_{k=2}^t\big(\theta(k+k_0+1)-1\big)\EE\left[\Phi(x^k,x,\lambda^k)+\frac{\mu}{2}\|x^k-x\|^2\right]\cr
& &+\sum_{k=2}^t\big((\beta_{k-1}-\rho_{k-1})(k+k_0)-(1-\theta)(k+k_0+1)\beta_k\big)\EE\| r^k\|^2\cr
&\le & (1-\theta)(k_0+2)\EE\left[\Phi(x^1,x,\lambda^1)+\beta_1\| r^1\|^2+\frac{\mu}{2}\|x^1-x\|^2\right] \label{eq:sum-bd2} \\
& &-\sum_{k=1}^t(k+k_0+1) \EE\left[\Delta_{\eta_k I-\beta_k A^\top A}(x^{k+1},x^k,x)-\frac{L_m}{2}\|x^{k+1}-x^k\|^2\right]. \nonumber
\end{eqnarray}
Hence, from \eqref{eq:cond2} and \eqref{eq:sum-xterm}, it follows that
\begin{equation}\label{eq:use-Ax-xterm}
\begin{aligned}
&~(t+k_0+1) \EE\Phi(x^{t+1},x,\lambda^{t+1})+\sum_{k=2}^t\big(\theta(k+k_0+1)-1\big)\EE\Phi(x^k,x,\lambda^k)\\ 
\le &~ (1-\theta)(k_0+2)\EE\left[\Phi(x^1,x,\lambda^1)+\beta_1\| r^1\|^2+\frac{\mu}{2}\|x^1-x\|^2\right]\\
&~+\frac{\eta_1(k_0+2)}{2}\EE\|x^1-x\|^2-\frac{t+k_0+1}{2}\EE\|x^{t+1}-x\|^2_{(\mu+\eta_t)I-\beta_t A^\top A}.
\end{aligned}
\end{equation}
In addition, from the update of $\lambda$ in \eqref{eq:fw-arpdc-lam}, we have
\begin{equation}\label{eq:lam-eq}
\langle\lambda^{k+1}-\lambda, Ax^{k+1}-b\rangle=-\frac{1}{\rho_k}\langle\lambda^{k+1}-\lambda,\lambda^{k+1}-\lambda^k\rangle=-\frac{1}{\rho_k}\Delta(\lambda^{k+1},\lambda^k,\lambda),
\end{equation}
and thus
\begin{eqnarray*}
& &(t+k_0+1)\EE\langle\lambda^{t+1}-\lambda, Ax^{t+1}-b\rangle+\sum_{k=2}^t\big(\theta(k+k_0+1)-1\big)\EE\langle\lambda^k-\lambda, Ax^k-b\rangle\cr
&=&-\frac{t+k_0+1}{\rho_t}\EE\Delta(\lambda^{t+1},\lambda^t,\lambda)-\sum_{k=2}^t\frac{\theta(k+k_0+1)-1}{\rho_{k-1}}\EE\Delta(\lambda^{k},\lambda^{k-1},\lambda)\cr
&\overset{\eqref{eq:sum-lamterm}}\le &\frac{\theta(k_0+3)-1}{2\rho_1}\EE\|\lambda^1-\lambda\|^2. 
\end{eqnarray*}
Since $\Phi(x^k,x,\lambda)=\Phi(x^k,x,\lambda^k)+\langle \lambda^k-\lambda, Ax^k-b\rangle,$ we obtain the desired result by adding the above inequality to \eqref{eq:use-Ax-xterm}.
\end{proof}

\subsection{Proof of Proposition \ref{prop:param-cond}}
Note that \eqref{eq:paras-k0} implies $k_0\ge\frac{4}{\theta}$, and thus \eqref{eq:cond1} must hold. Also, it is easy to see that \eqref{eq:cond4} holds with equality from the second equation of \eqref{eq:paras-rho}. Since $I\succeq \frac{A^\top A}{\|A\|_2^2}$, we can easily have \eqref{eq:cond6} by plugging in $\beta_k$ and $\eta_k$ defined in \eqref{eq:paras-beta} and \eqref{eq:paras-eta} respectively.

To verify \eqref{eq:cond3}, we plug in $\rho_k$ defined in the first equation of \eqref{eq:paras-rho}, and it is equivalent to requiring that for any $2\le k\le t-1$ 
$$\frac{\theta(k+k_0+1)-1}{\theta (k-1)+2+\theta}\ge\frac{\theta(k+k_0+2)-1}{\theta k+2+\theta}\Longleftrightarrow 1+\frac{\theta(k_0+1)-3}{\theta k+2}\ge1+\frac{\theta(k_0+1)-3}{\theta k+2+\theta}.$$
The inequality on the right hand side obviously holds, and thus we have \eqref{eq:cond3}.

Plugging in the formula of $\beta_k$, \eqref{eq:cond5} is equivalent to
$$(\theta k+2+\theta)(k+k_0+1)\ge (\theta k+2)(k+k_0),$$
which holds trivially, and thus \eqref{eq:cond5} follows.

With the given $\beta_k$ and $\rho_k$, \eqref{eq:cond2} becomes $\frac{6}{6-5\theta}(\theta k+2)(k+k_0)\ge (k+k_0+1)(\theta k+2+\theta),\,\forall 2\le k\le t,$ which is equivalent to $\frac{6}{6-5\theta}\ge \frac{(k_0+3)(3\theta+2)}{(k_0+2)(2\theta+2)}$. Note that $\frac{k_0+3}{k_0+2}$ is decreasing with respect to $k_0\ge0$ and also  $\frac{6}{6-5\theta}\ge \frac{(\frac{3}{\theta}+3)(3\theta+2)}{(\frac{3}{\theta}+2)(2\theta+2)}$. Hence, \eqref{eq:cond2} is satisfied from the fact $k_0\ge \frac{4}{\theta}$.

Finally, we show \eqref{eq:cond7}. Plugging in $\eta_k$, we have that \eqref{eq:cond7} becomes
$$
(k+k_0)\left(\frac{\mu}{2}\left(\theta k+2\right)+L_m\right)+\mu\big(\theta(k+k_0+1)-1\big)\ge  (k+k_0+1)\left(\frac{\mu}{2}\left(\theta k+2+\theta\right)+L_m\right),\,\forall k\ge 2,
$$
which is equivalent to $k_0+1\ge \frac{4}{\theta}+\frac{2L_m}{\theta\mu}$. Hence, for $k_0$ given in \eqref{eq:paras-k0}, \eqref{eq:cond7} must hold. Therefore, we have verified all conditions in \eqref{eq:para-conds}.

\subsection{Proof of Theorem \ref{thm:rate1}}
From Proposition \ref{prop:param-cond}, we have the inequality in \eqref{eq:sum-bd4} that, as $\lambda^1=0$, reduces to
\begin{eqnarray}
& &(t+k_0+1) \EE\Phi(x^{t+1},x,\lambda)+\sum_{k=2}^t\big(\theta(k+k_0+1)-1\big)\EE\Phi(x^k,x,\lambda)\nonumber\\
&\le & \phi_3(x,\lambda)-\frac{t+k_0+1}{2}\EE\|x^{t+1}-x\|_{(\mu+\eta_t) I-\beta_t A^\top A}^2.\label{eq:sum-bd4-1}
\end{eqnarray}

For $\rho\ge1$, we have
\begin{equation}\label{eq:rate1-mu-ineq}(\mu+\eta_t) I-\beta_t A^\top A\succeq \left(\frac{(\rho-1)\mu}{2\rho}(\theta t + \theta + 2) + \mu+L_m\right)I.
\end{equation}
Letting $x=x^*$ and using the convexity of $F$, we have from \eqref{eq:sum-bd4-1} and the above inequality that
\begin{align}\label{eq:sum-bd6}
\EE\left[F(\bar{x}^{t+1})-F(x^*)-\big\langle\lambda,A\bar{x}^{t+1}-b\big\rangle \right]\le \frac{1}{T}\EE\phi_3(x^*,\lambda),\,\forall \lambda,
\end{align}
which together with Lemmas \ref{lem:xy-rate} and \ref{lem:equiv-rate} with $\gamma=\max(2\|\lambda^*\|, 1+\|\lambda^*\|)$ indicates \eqref{eq:rate-obj-feas}.

In addition, note
$$\Phi(x^{t+1},x^*,\lambda^*)\ge\frac{\mu}{2}\|x^{t+1}-x^*\|^2.$$
Hence, letting $(x,\lambda)=(x^*,\lambda^*)$ in \eqref{eq:sum-bd4-1} and using \eqref{eq:1stopt-cond}, we have from \eqref{eq:rate1-mu-ineq} that
\begin{align}\label{eq:rate-pt}
\frac{t+k_0+1}{2}\left(\frac{(\rho-1)\mu}{2\rho}(\theta t + \theta + 2)+2\mu+L_m\right)\EE\|x^{t+1}-x^*\|^2
\le \phi_3(x^*,\lambda^*),
\end{align}
and the proof is completed.

\section{Technical proofs: Section \ref{sec:linear}}
In this section, we provide the proofs of the lemmas and theorems in section \ref{sec:linear}.

\subsection{Proof of Lemma \ref{lem:linear-1step}}
Note $r^{k+1}-r^k=A(x^{k+1}-x^k)+B(y^{k+1}-y^k)$. Hence by \eqref{uv-cross}, we have 
\begin{equation}
\begin{aligned}\label{eq:lin1-r-term}
\left\langle A(x^{k+1}-x),-\beta r^k\right\rangle
=&~-\beta\left\langle A(x^{k+1}-x), r^{k+1}\right\rangle+\beta\left\langle A(x^{k+1}-x), B(y^{k+1}-y^k)\right\rangle\\
&~+\frac{\beta}{2}\left[\|A(x^{k+1}-x)\|^2-\|A(x^k-x)\|^2+\|A(x^{k+1}-x^k)\|^2\right].
\end{aligned}
\end{equation}
In addition, $\langle A(x^{k+1}-x), \lambda^k\rangle = \langle A(x^{k+1}-x), \lambda^{k+1}+\rho r^{k+1}\rangle$. Plugging this equation and \eqref{eq:lin1-r-term} into \eqref{eq:S-basic-ineq} with $x^o=x^k, \lambda^o=\lambda^k, x^+=x^{k+1}, W=\eta_x I$ and taking expectation yield
\begin{align}\label{eq:lin-convg-ineq1}
 &~ \EE\left[F(x^{k+1})-F(x)+\frac{\mu}{2}\|x^{k+1}-x\|^2-\big\langle A(x^{k+1}-x), \lambda^{k+1}\big\rangle+(\beta-\rho)\big\langle A(x^{k+1}-x), r^{k+1}\big\rangle\right] \nonumber\\
&~+\frac{1}{2}\EE\left[\|x^{k+1}-x\|_P^2-\|x^k-x\|_P^2+\|x^{k+1}-x^k\|^2_{P-L_mI}\right]\nonumber\\
\le &~  (1- \theta)\EE\left[F(x^k)-F(x)+\frac{\mu}{2}\|x^k-x\|^2-\big\langle A(x^k-x), \lambda^k-\beta r^k\big\rangle\right]\\
&~+\beta\EE\left\langle A(x^{k+1}-x), B(y^{k+1}-y^k)\right\rangle,\nonumber
\end{align}
where $P=\eta_x I-\beta A^\top A$.


From \eqref{class-y-update}, the optimality condition for $ \tilde{y}^{k+1}$ is
\begin{equation}\label{eq:opt-y}
\nabla h( \tilde{y}^{k+1})- B^\top\vlam^k+ \beta  B^\top\vr^{k+\frac{1}{2}} +\eta_y (\tilde{y}^{k+1}-y^k)=\vzero.
\end{equation}
Since
$\Prob(y^{k+1}=\tilde{y}^{k+1})=\theta,\, \Prob(y^{k+1}=y^k)=1-\theta,$
we have
\begin{eqnarray*}
& &\EE\left\langle y^{k+1}- y, \nabla h( y^{k+1})- B^\top\vlam^{k}
+ \beta  B^\top r^{k+\frac{1}{2}}+\eta_y(y^{k+1}-y^k)\right\rangle\cr
&=&(1-\theta)\EE\left\langle y^k- y, \nabla h( y^k)- B^\top\vlam^k
+ \beta  B^\top r^{k+\frac{1}{2}}\right\rangle,
\end{eqnarray*}
or equivalently,
\begin{eqnarray}
& &\EE\left\langle y^{k+1}- y, \nabla h( y^{k+1})- B^\top\vlam^{k+1}
+( \beta -\rho) B^\top r^{k+1}- \beta B^\top B(y^{k+1}-y^k)+\eta_y(y^{k+1}-y^k)\right\rangle\cr
&=&(1-\theta)\EE\left\langle y^k- y, \nabla h( y^k)- B^\top\vlam^k
+ \beta  B^\top r^k\right\rangle+\beta(1-\theta) \EE\left\langle B(y^k- y),A(x^{k+1}-x^k)\right\rangle. \label{opt-1Y}
\end{eqnarray}
Recall $Q=\eta_y I -\beta B^\top B$. We have
$$\left\langle y^{k+1}- y, - \beta B^\top B(y^{k+1}-y^k)+\eta_y(y^{k+1}-y^k)\right\rangle = \frac{1}{2}\left[\|y^{k+1}- y\|_Q^2-\|y^k- y\|_Q^2+\|y^{k+1}- y^k\|_Q^2\right].$$
Therefore adding \eqref{opt-1Y} to \eqref{eq:lin-convg-ineq1}, noting $Ax+By=b$, and plugging \eqref{eq:lam-eq} with $\rho_k=\rho$, we have the desired result.

\subsection{Proof of Theorem \ref{thm-pre}}
Before proving Theorem \ref{thm-pre}, we establish a few inequalities. First, using Young's inequality, we have the following results.
\begin{lemma}
For any $\tau_1,\tau_2>0$, it holds that
\begin{eqnarray}
& &\langle A(x^{k+1}- x^*),  B( y^{k+1}- y^k)\rangle\le\frac{1}{2\tau_1}\|A(x^{k+1}-x^*)\|^2+\frac{\tau_1}{2}\|B(y^{k+1}-y^k)\|^2,\label{cross-xy-term1}\\
& &\langle B(y^k- y^*),A(x^{k+1}-x^k)\rangle\le\frac{1}{2\tau_2}\|B(y^k- y^*)\|^2+\frac{\tau_2}{2}\|A(x^{k+1}-x^k)\|^2.\label{cross-xy-term2}
\end{eqnarray}
\end{lemma}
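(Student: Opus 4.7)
The plan is to observe that both inequalities are direct instances of Young's inequality $\langle u,v\rangle\le \frac{1}{2\tau}\|u\|^2+\frac{\tau}{2}\|v\|^2$, valid for any $\tau>0$ and any vectors $u,v$ of matching dimension. This scalar inequality itself follows from expanding the nonnegative quantity $\bigl\|\tfrac{1}{\sqrt{\tau}}u-\sqrt{\tau}\,v\bigr\|^2\ge 0$ and rearranging, so no deeper machinery is needed.

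For \eqref{cross-xy-term1}, I would apply Young's inequality with the choices $u=A(x^{k+1}-x^*)$, $v=B(y^{k+1}-y^k)$, and parameter $\tau=\tau_1$. For \eqref{cross-xy-term2}, I would apply it with $u=B(y^k-y^*)$, $v=A(x^{k+1}-x^k)$, and parameter $\tau=\tau_2$. In both cases, the linear-constraint coefficient matrices $A$ and $B$ simply define the two vectors being paired, so the residuals $Ax$ and $By$ play no special role in the bound; the estimate is purely algebraic.

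There is no genuine obstacle here. The only thing to note is that the inequalities hold pointwise (for any realization of the random iterates) and therefore survive taking conditional or full expectations, which is how they are later used when combined with Lemma \ref{lem:linear-1step} to prove Theorem \ref{thm-pre}. Consequently, the entire proof will occupy only one or two lines of text, and it would be natural to state it directly as: apply Young's inequality twice with parameters $\tau_1$ and $\tau_2$ respectively, and both claims follow.
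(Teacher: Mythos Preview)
Your proposal is correct and matches the paper's approach exactly: the paper simply prefaces the lemma with ``using Young's inequality, we have the following results'' and offers no further argument. Your one-line derivation via $\bigl\|\tfrac{1}{\sqrt{\tau}}u-\sqrt{\tau}\,v\bigr\|^2\ge 0$ is precisely what is intended.
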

In addition, we are able to bound the $\lambda$-term by $y$-term and the residual $r$. The proofs are given in Appendix \ref{sec:pf-lem-c1} and \ref{sec:pf-lem-c2}.
\begin{lemma}\label{lem:bd-lam-by-y}
For any $\delta>0$, we have
\begin{eqnarray}\label{bd-lam-term-n}
& &\EE\|B^\top(\lambda^{k+1}-\lambda^*)\|^2-(1-\theta)(1+\delta)\EE\|B^\top(\lambda^k-\lambda^*)\|^2\cr
&\le & 4\EE\big[L_h^2\|y^{k+1}-y^*\|^2+\|Q(y^{k+1}-y^k)\|^2\big]+2(\beta-\rho)^2\EE\|B^\top r^{k+1}\|^2\\
& &+2\rho^2(1-\theta)(1+\frac{1}{\delta})\EE\big[\|B^\top r^{k+1}\|^2+\|B^\top B(y^{k+1}-y^k)\|^2\big].\nonumber
\end{eqnarray}
\end{lemma}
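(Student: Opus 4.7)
The plan is to split the analysis according to the Bernoulli indicator $\xi_k\in\{0,1\}$ that records whether $y^{k+1}=\tilde y^{k+1}$ (with probability $\theta$) or $y^{k+1}=y^k$ (with probability $1-\theta$), derive a pointwise bound for $\|B^\top(\lambda^{k+1}-\lambda^*)\|^2$ in each case, and then combine with weights $\theta$ and $1-\theta$ before taking total expectation.

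When $\xi_k=1$, the optimality condition \eqref{eq:opt-y} reads $B^\top\lambda^k=\nabla h(\tilde y^{k+1})+\beta B^\top r^{k+\frac12}+\eta_y(\tilde y^{k+1}-y^k)$. Substituting $r^{k+\frac12}=r^{k+1}-B(\tilde y^{k+1}-y^k)$ (valid in this branch) together with $\lambda^{k+1}=\lambda^k-\rho r^{k+1}$ and the KKT identity $B^\top\lambda^*=\nabla h(y^*)$ from \eqref{kkt2}, a short rearrangement yields
\[
B^\top(\lambda^{k+1}-\lambda^*)=\bigl(\nabla h(y^{k+1})-\nabla h(y^*)\bigr)+Q(y^{k+1}-y^k)+(\beta-\rho)B^\top r^{k+1},
\]
where $Q=\eta_y I-\beta B^\top B$. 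Applying $\|a+b+c\|^2\le 4\|a\|^2+4\|b\|^2+2\|c\|^2$ and the $L_h$-Lipschitz continuity of $\nabla h$ from Assumption \ref{assump:str-cvx-h} then gives
\[
\|B^\top(\lambda^{k+1}-\lambda^*)\|^2\le 4L_h^2\|y^{k+1}-y^*\|^2+4\|Q(y^{k+1}-y^k)\|^2+2(\beta-\rho)^2\|B^\top r^{k+1}\|^2.
\]

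When $\xi_k=0$, we have $y^{k+1}=y^k$ and $r^{k+1}=r^{k+\frac12}$, so Young's inequality with parameter $\delta$ applied to $B^\top(\lambda^{k+1}-\lambda^*)=B^\top(\lambda^k-\lambda^*)-\rho B^\top r^{k+1}$ yields
\[
\|B^\top(\lambda^{k+1}-\lambda^*)\|^2\le(1+\delta)\|B^\top(\lambda^k-\lambda^*)\|^2+\rho^2(1+\tfrac1\delta)\|B^\top r^{k+\frac12}\|^2.
\]
Weighting the two case bounds by $\theta$ and $1-\theta$, transferring $(1-\theta)(1+\delta)\|B^\top(\lambda^k-\lambda^*)\|^2$ to the left-hand side, and taking total expectation, the $\xi_k=1$ contributions already align with the first three terms on the RHS of \eqref{bd-lam-term-n}, since the $\xi_k=0$ pieces of $L_h^2\|y^{k+1}-y^*\|^2$, $\|Q(y^{k+1}-y^k)\|^2$, and $\|B^\top r^{k+1}\|^2$ on the RHS are all nonnegative.

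The main obstacle is absorbing the single leftover term $(1-\theta)\rho^2(1+\tfrac1\delta)\|B^\top r^{k+\frac12}\|^2$ (coming from the $\xi_k=0$ branch) into the quantity $2\rho^2(1-\theta)(1+\tfrac1\delta)\EE[\|B^\top r^{k+1}\|^2+\|B^\top B(y^{k+1}-y^k)\|^2]$ on the RHS. Conditionally on the $x$-update, this expectation equals $\theta(\|B^\top\tilde r\|^2+\|B^\top B(\tilde y^{k+1}-y^k)\|^2)+(1-\theta)\|B^\top r^{k+\frac12}\|^2$, where $\tilde r=r^{k+\frac12}+B(\tilde y^{k+1}-y^k)$ denotes the residual in the $\xi_k=1$ branch. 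The identity $B^\top r^{k+\frac12}=B^\top\tilde r-B^\top B(\tilde y^{k+1}-y^k)$ combined with $\|u-v\|^2\le 2\|u\|^2+2\|v\|^2$ gives $\|B^\top\tilde r\|^2+\|B^\top B(\tilde y^{k+1}-y^k)\|^2\ge\tfrac12\|B^\top r^{k+\frac12}\|^2$, so the conditional expectation is at least $\tfrac{2-\theta}{2}\|B^\top r^{k+\frac12}\|^2\ge\tfrac12\|B^\top r^{k+\frac12}\|^2$. Multiplying by $2\rho^2(1-\theta)(1+\tfrac1\delta)$ recovers at least the leftover $(1-\theta)\rho^2(1+\tfrac1\delta)\|B^\top r^{k+\frac12}\|^2$, which closes the accounting and establishes \eqref{bd-lam-term-n}. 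The factor $2$ built into the coefficient $2\rho^2(1-\theta)(1+\tfrac1\delta)$ is precisely what makes this absorption possible.
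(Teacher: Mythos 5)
Your proposal is correct and follows essentially the same route as the paper: condition on the Bernoulli variable governing the $y$-update, use the optimality condition \eqref{eq:opt-y} together with \eqref{kkt2} to write $B^\top(\lambda^{k+1}-\lambda^*)=(\nabla h(y^{k+1})-\nabla h(y^*))+Q(y^{k+1}-y^k)+(\beta-\rho)B^\top r^{k+1}$ on the update branch, apply Young's inequality with the $4,4,2$ and $(1+\delta,1+\tfrac1\delta)$ splits, and absorb the non-update branch's $\|B^\top r^{k+\frac12}\|^2$ term into the last group on the right-hand side. The only (immaterial) difference is in that last absorption step, where the paper simply uses the pointwise bound $\|B^\top r^{k+\frac12}\|^2\le 2\|B^\top r^{k+1}\|^2+2\|B^\top B(y^{k+1}-y^k)\|^2$ rather than your conditional-expectation lower bound, which reaches the same conclusion.
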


\begin{lemma}\label{lem:ineq-lam-term}
Assume \eqref{eq:cond-kappa-del}. Then
\begin{eqnarray}\label{eq:ineq-lam-term}
& &\frac{\sigma_{\min}(BB^\top)}{2}\big[\|\lambda^{k+1}-\lambda^*\|^2-(1-\theta)\|\lambda^{k}-\lambda^*\|^2+\frac{1}{\theta}\|\lambda^{k+1}-\lambda^k\|^2\big]\cr
&\le &\|B^\top(\lambda^{k+1}-\lambda^*)\|^2-(1-\theta)(1+\delta)\|B^\top(\lambda^k-\lambda^*)\|^2+\kappa\|B^\top(\lambda^{k+1}-\lambda^k)\|^2,
\end{eqnarray}
where $\sigma_{\min}(BB^\top)$ denotes the smallest singular value of $BB^\top$.
\end{lemma}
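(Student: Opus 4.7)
The plan is to reduce both sides of the stated inequality to quadratic forms in shifted variables, and then to invoke \eqref{eq:cond-kappa-del} together with the bound $BB^\top \succeq \sigma_{\min}(BB^\top) I$ that follows from the full row-rankness of $B$.

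First, I set $u = \lambda^{k+1}-\lambda^*$, $v = \lambda^k - \lambda^*$, and change variables via $s = u$, $t = u - v$ (so that $v = s - t$). Direct expansion gives
\begin{equation*}
\|u\|^2 - (1-\theta)\|v\|^2 + \tfrac{1}{\theta}\|u-v\|^2 = \theta\|s\|^2 + 2(1-\theta)\langle s,t\rangle + \bigl(\tfrac{1}{\theta} - (1-\theta)\bigr)\|t\|^2,
\end{equation*}
and, applying the same change of variables after passing through $B^\top$,
\begin{align*}
& \|B^\top u\|^2 - (1-\theta)(1+\delta)\|B^\top v\|^2 + \kappa\|B^\top(u-v)\|^2 \\
& \quad = \bigl[1 - (1-\theta)(1+\delta)\bigr]\|B^\top s\|^2 + 2(1-\theta)(1+\delta)\langle B^\top s, B^\top t\rangle + \bigl[\kappa - (1-\theta)(1+\delta)\bigr]\|B^\top t\|^2.
\end{align*}
This displays the left-hand side of the lemma as $\tfrac{\sigma_{\min}(BB^\top)}{2}$ times the $(s,t)$-quadratic form with matrix $R := \bigl(\begin{smallmatrix}\theta & 1-\theta\\ 1-\theta & \tfrac{1}{\theta}-(1-\theta)\end{smallmatrix}\bigr)$ (the right-hand matrix of \eqref{eq:cond-kappa-del}), and the right-hand side as the $(B^\top s, B^\top t)$-quadratic form with matrix $M$ equal to the unscaled left-hand matrix of \eqref{eq:cond-kappa-del}.

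Next, I will apply the scalar inequality implied by \eqref{eq:cond-kappa-del} to each coordinate pair $\bigl((B^\top s)_i, (B^\top t)_i\bigr)$ and sum over $i$; this yields
\begin{equation*}
2 \cdot \text{(RHS of lemma)} \;\ge\; \theta\|B^\top s\|^2 + 2(1-\theta)\langle B^\top s, B^\top t\rangle + \bigl(\tfrac{1}{\theta}-(1-\theta)\bigr)\|B^\top t\|^2,
\end{equation*}
i.e., twice the RHS of the lemma dominates the $(B^\top s, B^\top t)$-quadratic with matrix $R$. It then remains to check that this lower bound dominates $2 \cdot \text{(LHS of lemma)} = \sigma_{\min}(BB^\top)$ times the $(s,t)$-quadratic with the same matrix $R$.

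Writing $\sigma = \sigma_{\min}(BB^\top)$ and $C = BB^\top - \sigma I \succeq 0$, the gap between these two quadratics equals
\begin{equation*}
\theta\, s^\top C s + 2(1-\theta)\, s^\top C t + \bigl(\tfrac{1}{\theta} - (1-\theta)\bigr)\, t^\top C t \;=\; (s,t)^\top (R \otimes C) (s,t).
\end{equation*}
Since $R$ has positive diagonal entries and $\det R = \theta \ge 0$ (so $R \succeq 0$), and $C \succeq 0$, we have $R \otimes C \succeq 0$, so the gap is nonnegative and the argument closes. The main obstacle is purely algebraic bookkeeping — lining up the two quadratic-form expansions with the $2\times 2$ matrices appearing in \eqref{eq:cond-kappa-del} — after which the two ingredients (the given matrix inequality and the full row-rankness of $B$) combine cleanly via a Kronecker product.
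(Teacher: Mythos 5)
Your argument is correct and matches the paper's proof essentially verbatim: both express the two sides as quadratic forms in $(\lambda^{k+1}-\lambda^*,\lambda^{k+1}-\lambda^k)$ with the two $2\times 2$ matrices of \eqref{eq:cond-kappa-del} tensored with $BB^\top$ and $I$ respectively, then combine \eqref{eq:cond-kappa-del} with $U\otimes V\succeq \sigma_{\min}(V)\,U\otimes I$ for PSD $U,V$ (your $R\otimes C\succeq 0$ step is exactly this fact). The only cosmetic remark is that $BB^\top\succeq\sigma_{\min}(BB^\top)I$ needs no full row-rankness of $B$; that hypothesis only makes $\sigma_{\min}(BB^\top)>0$, which is not used here.
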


\begin{lemma}\label{lem:three-lemma-together}
Let $c,\delta,\tau_1,\tau_2$ and $\kappa$ be constants satisfying the conditions in Theorem \ref{thm-pre}. Then
\begin{align}\label{eq:three-lemma-together}
&~\beta \EE\big\langle A(x^{k+1}- x^*),  B( y^{k+1}- y^k)\big\rangle+\beta(1 -\theta)\EE\big\langle B(y^k- y^*),A(x^{k+1}-x^k)\big\rangle\cr
&~+\frac{c}{2}\sigma_{\min}(BB^\top)\EE\big[\|\lambda^{k+1}-\lambda^*\|^2-(1-\theta)\|\lambda^{k}-\lambda^*\|^2+\frac{1}{\theta}\|\lambda^{k+1}-\lambda^k\|^2\big]\cr
\le &~ \frac{1}{2}\EE\|x^{k+1}- x^k\|_{P-L_mI}^2+\frac{\beta}{2\tau_1}\EE\|A(x^{k+1}-x^*)\|^2\\
&~+\frac{1}{2}\EE\|y^{k+1}- y^k\|_Q^2+\frac{\beta(1-\theta)}{2\tau_2}\EE\|B(y^k-y^*)\|^2+4cL_h^2\EE\|y^{k+1}-y^*\|^2\cr
&~+\left[c\rho^2\left(\kappa+2(1-\theta)\big(1+\frac{1}{\delta}\big)\right)+2c(\beta-\rho)^2\right]\EE\|B^\top r^{k+1}\|^2.\nonumber
\end{align}
\end{lemma}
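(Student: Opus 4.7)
\textbf{Proof proposal for Lemma \ref{lem:three-lemma-together}.} The plan is to combine the four preliminary tools already listed just above the statement, namely the Young-type bounds \eqref{cross-xy-term1} and \eqref{cross-xy-term2}, together with Lemmas \ref{lem:bd-lam-by-y} and \ref{lem:ineq-lam-term}, and then use the assumptions \eqref{eq:choice-P-mat}--\eqref{eq:choice-Q-mat} on $P$ and $Q$ to absorb the leftover quadratic terms. No new inequality will be invented; the work is essentially bookkeeping of coefficients.

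First I would apply \eqref{cross-xy-term1} after multiplying by $\beta$, and \eqref{cross-xy-term2} after multiplying by $\beta(1-\theta)$. This produces on the right-hand side the four pieces
\[
\tfrac{\beta}{2\tau_1}\|A(x^{k+1}-x^*)\|^2,\quad \tfrac{\beta\tau_1}{2}\|B(y^{k+1}-y^k)\|^2,\quad \tfrac{\beta(1-\theta)}{2\tau_2}\|B(y^k-y^*)\|^2,\quad \tfrac{\beta(1-\theta)\tau_2}{2}\|A(x^{k+1}-x^k)\|^2.
\]
Of these, the first and third already appear verbatim in the target inequality; the fourth will be absorbed into $\tfrac{1}{2}\|x^{k+1}-x^k\|_{P-L_m I}^2$ by \eqref{eq:choice-P-mat} (since that assumption gives $P-L_mI\succeq \beta(1-\theta)\tau_2 A^\top A$), and the second joins the other $y^{k+1}-y^k$ terms to be handled by \eqref{eq:choice-Q-mat}.

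Next I would multiply Lemma \ref{lem:ineq-lam-term} by $c$ to obtain
\[
\tfrac{c\,\sigma_{\min}(BB^\top)}{2}\bigl[\|\lambda^{k+1}-\lambda^*\|^2-(1-\theta)\|\lambda^k-\lambda^*\|^2+\tfrac{1}{\theta}\|\lambda^{k+1}-\lambda^k\|^2\bigr]\le c\bigl[\|B^\top(\lambda^{k+1}-\lambda^*)\|^2-(1-\theta)(1+\delta)\|B^\top(\lambda^k-\lambda^*)\|^2\bigr]+c\kappa\|B^\top(\lambda^{k+1}-\lambda^k)\|^2,
\]
then pass to expectations and use Lemma \ref{lem:bd-lam-by-y} (also multiplied by $c$) on the bracket involving $B^\top(\lambda^{k+1}-\lambda^*)$ and $B^\top(\lambda^k-\lambda^*)$. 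Using the identity $\lambda^{k+1}-\lambda^k=-\rho r^{k+1}$, the remaining term becomes $c\kappa\rho^2\,\EE\|B^\top r^{k+1}\|^2$. Collecting all $\|B^\top r^{k+1}\|^2$ contributions yields precisely $c\rho^2\bigl(\kappa+2(1-\theta)(1+\tfrac{1}{\delta})\bigr)+2c(\beta-\rho)^2$, the coefficient stated in the lemma. The term $4cL_h^2\,\EE\|y^{k+1}-y^*\|^2$ also transfers verbatim from Lemma \ref{lem:bd-lam-by-y}.

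The only remaining work is to show that the three $y^{k+1}-y^k$ terms
\[
\tfrac{\beta\tau_1}{2}\|B(y^{k+1}-y^k)\|^2+4c\|Q(y^{k+1}-y^k)\|^2+2c\rho^2(1-\theta)(1+\tfrac{1}{\delta})\|B^\top B(y^{k+1}-y^k)\|^2
\]
are jointly dominated by $\tfrac{1}{2}\|y^{k+1}-y^k\|_Q^2$; this is exactly the PSD inequality \eqref{eq:choice-Q-mat} applied to the increment $y^{k+1}-y^k$. Putting all these pieces together produces the target inequality \eqref{eq:three-lemma-together}. The main obstacle I anticipate is the coefficient bookkeeping — in particular, making sure that the factor $c$ applied to Lemma \ref{lem:bd-lam-by-y} leaves the $y^*$-term with precisely the coefficient $4cL_h^2$ and that the $\kappa$-contribution from Lemma \ref{lem:ineq-lam-term} combines with the $(1+\tfrac{1}{\delta})$-contribution from Lemma \ref{lem:bd-lam-by-y} to give the announced coefficient on $\|B^\top r^{k+1}\|^2$; but once those are tracked carefully, the assumptions \eqref{eq:choice-P-mat}--\eqref{eq:choice-Q-mat} have been designed to make the absorptions work exactly.
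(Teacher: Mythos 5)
Your proposal is correct and follows essentially the same route as the paper's own proof: scale \eqref{cross-xy-term1} by $\beta$ and \eqref{cross-xy-term2} by $\beta(1-\theta)$, scale Lemmas \ref{lem:bd-lam-by-y} and \ref{lem:ineq-lam-term} by $c$, substitute $\lambda^{k+1}-\lambda^k=-\rho r^{k+1}$, and absorb the leftover $\|A(x^{k+1}-x^k)\|^2$ and $y^{k+1}-y^k$ quadratics via the PSD conditions \eqref{eq:choice-P-mat} and \eqref{eq:choice-Q-mat}. The coefficient bookkeeping you flag as the main risk indeed checks out exactly.
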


Now we are ready to show Theorem \ref{thm-pre}.
\begin{proof} [of Theorem \ref{thm-pre}]

Letting $(x,y,\lambda)=(x^*,y^*,\lambda^*)$ in \eqref{lin-ineq1-1Y}, plugging \eqref{kkt} into it, and noting $Ax^*+By^*=b$, we have
\begin{eqnarray}
& &\EE\Psi(z^{k+1},z^*)+( \beta -\rho)\EE \|r^{k+1}\|^2+\EE \left[\Delta_P(x^{k+1},x^k, x^*)-\frac{L_m}{2}\|x^{k+1}- x^k\|^2\right]\nonumber\\
& &+\EE\Delta_Q(y^{k+1},y^k, y^*)+\frac{\mu}{2}\EE\|x^{k+1}-x^*\|^2+\frac{1}{\rho}\EE\Delta(\lambda^{k+1},\lambda^{k},\lambda^*)\nonumber\\
&\le &
(1-\theta)\EE\Psi(z^k,z^*)+\beta(1 -\theta)\EE \|r^k\|^2+\frac{1-\theta}{\rho}\EE\Delta(\lambda^{k},\lambda^{k-1},\lambda^*)+\frac{\mu(1-\theta)}{2}\EE\|x^k-x^*\|^2  \nonumber\\
& &+ \beta \EE\big\langle A(x^{k+1}- x^*),  B( y^{k+1}- y^k)\big\rangle+\beta(1 -\theta)\EE\big\langle B(y^k- y^*),A(x^{k+1}-x^k)\big\rangle, \label{lin-ineq2-1Y}
\end{eqnarray}
where $\Psi$ is defined in \eqref{eq:def-Psi}. 
Note 
\begin{align*}
&~\frac{1}{\rho}\Delta(\lambda^{k+1},\lambda^{k},\lambda^*)\\
=&~\frac{1}{2\rho}\big[\|\lambda^{k+1}-\lambda^*\|^2-(1-\theta)\|\lambda^{k}-\lambda^*\|^2
+\frac{1}{\theta}\|\lambda^{k+1}-\lambda^k\|^2\big]-\frac{\rho}{2}(\frac{1}{\theta}-1)\|r^{k+1}\|^2 - \frac{\theta}{2\rho}\|\lambda^{k}-\lambda^*\|^2,
\end{align*}
and
\begin{align*}
&~\frac{1-\theta}{\rho}\Delta(\lambda^{k},\lambda^{k-1},\lambda^*)\\
=&~\frac{1}{2\rho}\big[\|\lambda^{k}-\lambda^*\|^2-(1-\theta)\|\lambda^{k-1}-\lambda^*\|^2
+\frac{1}{\theta}\|\lambda^{k}-\lambda^{k-1}\|^2\big]-\frac{\rho}{2}(\frac{1}{\theta}-(1-\theta))\|r^k\|^2- \frac{\theta}{2\rho}\|\lambda^{k}-\lambda^*\|^2.
\end{align*}

Adding \eqref{eq:three-lemma-together} to \eqref{lin-ineq2-1Y} and plugging the above two equations yield
\begin{eqnarray}
& &\EE\Psi(z^{k+1},z^*)+( \beta -\rho)\EE \|r^{k+1}\|^2+\EE \left[\Delta_P(x^{k+1},x^k, x^*)-\frac{L_m}{2}\|x^{k+1}- x^k\|^2\right]\nonumber\\
& &+\EE\Delta_Q(y^{k+1},y^k, y^*)+\frac{\mu}{2}\EE\|x^{k+1}-x^*\|^2-\frac{\rho}{2}(\frac{1}{\theta}-1)\EE\|r^{k+1}\|^2 - \frac{\theta}{2\rho}\EE\|\lambda^{k}-\lambda^*\|^2\nonumber\\
& &+\left(\frac{1}{2\rho}+\frac{c}{2}\sigma_{\min}(BB^\top)\right)\EE\big[\|\lambda^{k+1}-\lambda^*\|^2-(1-\theta)\|\lambda^{k}-\lambda^*\|^2+\frac{1}{\theta}\|\lambda^{k+1}-\lambda^k\|^2\big]\nonumber\\
&\le &
(1-\theta)\EE\Psi(z^k,z^*)+\beta(1 -\theta)\EE \|r^k\|^2 -\frac{\rho}{2}(\frac{1}{\theta}-(1-\theta))\EE\|r^k\|^2- \frac{\theta}{2\rho}\EE\|\lambda^{k}-\lambda^*\|^2 \nonumber\\
&&+\frac{1}{2\rho}\EE\big[\|\lambda^{k}-\lambda^*\|^2-(1-\theta)\|\lambda^{k-1}-\lambda^*\|^2
+\frac{1}{\theta}\|\lambda^{k}-\lambda^{k-1}\|^2\big]\nonumber\\
& &+\frac{\mu(1-\theta)}{2}\EE\|x^k-x^*\|^2+\frac{1}{2}\EE\|x^{k+1}- x^k\|_{P-L_mI}^2+\frac{\beta}{2\tau_1}\EE\|A(x^{k+1}-x^*)\|^2\nonumber\\
& &+\frac{1}{2}\EE\|y^{k+1}- y^k\|_Q^2+\frac{\beta(1-\theta)}{2\tau_2}\EE\|B(y^k-y^*)\|^2+4cL_h^2\EE\|y^{k+1}-y^*\|^2\cr
& &+\left[c\rho^2\left(\kappa+2(1-\theta)\big(1+\frac{1}{\delta}\big)\right)+2c(\beta-\rho)^2\right]\EE\|B^\top r^{k+1}\|^2. \nonumber
\end{eqnarray}

Using the definition in \eqref{eq:def-Delta} to expand $\Delta_P(x^{k+1},x^k, x^*)$ and $\Delta_Q(y^{k+1},y^k, y^*)$ in the above inequality, and then rearranging terms, we have 
\begin{eqnarray}
& &\EE\Psi(z^{k+1},z^*)+\left(( \beta -\rho)-\frac{\rho}{2}(\frac{1}{\theta}-1)\right)\EE \|r^{k+1}\|^2\cr
& & -\left[c\rho^2\left(\kappa+2(1-\theta)\big(1+\frac{1}{\delta}\big)\right)+2c(\beta-\rho)^2\right]\EE\|B^\top r^{k+1}\|^2\cr
& &+\EE \left[\frac{1}{2}\|x^{k+1}-x^*\|_P^2+\frac{\mu}{2}\|x^{k+1}-x^*\|^2-\frac{\beta}{2\tau_1}\|A(x^{k+1}-x^*)\|^2\right]\nonumber\\
& &+\EE\left[\frac{1}{2}\|y^{k+1}-y^*\|_Q^2-4cL_h^2\|y^{k+1}-y^*\|^2\right] \nonumber\\
& &+\left(\frac{1}{2\rho}+\frac{c}{2}\sigma_{\min}(BB^\top)\right)\EE\big[\|\lambda^{k+1}-\lambda^*\|^2-(1-\theta)\|\lambda^{k}-\lambda^*\|^2+\frac{1}{\theta}\|\lambda^{k+1}-\lambda^k\|^2\big]\nonumber\\
&\le &
(1-\theta)\EE\Psi(z^k,z^*)+\beta(1 -\theta)\EE \|r^k\|^2 -\frac{\rho}{2}(\frac{1}{\theta}-(1-\theta))\EE\|r^k\|^2 + \frac{1}{2}\EE\|x^{k}-x^*\|_P^2 \nonumber\\
& &+\frac{\mu(1-\theta)}{2}\EE\|x^k-x^*\|^2+\frac{1}{2}\EE\|y^{k}- y^*\|_Q^2+\frac{\beta(1-\theta)}{2\tau_2}\EE\|B(y^k-y^*)\|^2\nonumber\\
&&+\frac{1}{2\rho}\EE\big[\|\lambda^{k}-\lambda^*\|^2-(1-\theta)\|\lambda^{k-1}-\lambda^*\|^2
+\frac{1}{\theta}\|\lambda^{k}-\lambda^{k-1}\|^2\big]. \label{lin-ineq2-1Y-21}
\end{eqnarray}
Since $\rho = \theta\beta$, it holds 
$$( \beta -\rho)-\frac{\rho}{2}(\frac{1}{\theta}-1) = \frac{\beta-\rho}{2}, \quad \beta(1 -\theta)-\frac{\rho}{2}(\frac{1}{\theta}-(1-\theta))\le \frac{\beta(1-\theta)}{2},$$
and thus the inequality \eqref{lin-ineq2-1Y-21} implies
\begin{eqnarray}
& &\EE\Psi(z^{k+1},z^*)+\frac{ \beta -\rho}{2}\EE \|r^{k+1}\|^2 -\left[c\rho^2\left(\kappa+2(1-\theta)\big(1+\frac{1}{\delta}\big)\right)+2c(\beta-\rho)^2\right]\EE\|B^\top r^{k+1}\|^2\cr
& &+\EE \left[\frac{1}{2}\|x^{k+1}-x^*\|_P^2+\frac{\mu}{2}\|x^{k+1}-x^*\|^2-\frac{\beta}{2\tau_1}\|A(x^{k+1}-x^*)\|^2\right]\nonumber\\
& &+\EE\left[\frac{1}{2}\|y^{k+1}-y^*\|_Q^2-4cL_h^2\|y^{k+1}-y^*\|^2\right] \nonumber\\
& &+\left(\frac{1}{2\rho}+\frac{c}{2}\sigma_{\min}(BB^\top)\right)\EE\big[\|\lambda^{k+1}-\lambda^*\|^2-(1-\theta)\|\lambda^{k}-\lambda^*\|^2+\frac{1}{\theta}\|\lambda^{k+1}-\lambda^k\|^2\big]\nonumber\\
&\le & \psi(z^k,z^*; P,Q,\beta,\rho,c,\tau_2),\label{lin-ineq2-1Y-22}
\end{eqnarray}
where $\psi$ is defined in \eqref{eq:def-psi}.

From \eqref{scvx-Fh}, it follows that
\begin{equation}\label{eq:ineq-Psi-term} (1-\alpha)\Psi(z^{k+1},z^*)+\frac{\alpha\mu}{2}\|x^{k+1}-x^*\|^2+\alpha\nu\|y^{k+1}-y^*\|^2\le \Psi(z^{k+1},z^*) .
\end{equation}
In addition, note that
\begin{eqnarray*}
\|r^{k+1}\|^2 &=&\|Ax^{k+1}+By^{k+1}-(Ax^*+By^*)\|^2\cr
&\le & 2\|A\|_2^2\|x^{k+1}-x^*\|^2+2\|B\|_2^2\|y^{k+1}-y^*\|^2\cr
&\le & \gamma\left(\frac{\alpha\mu}{4}\|x^{k+1}-x^*\|^2+\frac{\alpha\nu}{4}\|y^{k+1}-y^*\|^2\right),
\end{eqnarray*}
and thus
\begin{equation}\label{eq:ineq-r-term}
\frac{1}{\gamma}\|r^{k+1}\|^2\le \frac{\alpha\mu}{4}\|x^{k+1}-x^*\|^2+\frac{\alpha\nu}{4}\|y^{k+1}-y^*\|^2.
\end{equation}
Adding \eqref{eq:ineq-Psi-term} and \eqref{eq:ineq-r-term} to \eqref{lin-ineq2-1Y-22} gives the desired result. 
\end{proof}

\subsection{Proof of Theorem \ref{thm-linear}}
From $0<\alpha<\theta$, the full row-rankness of $B$, and the conditions in \eqref{eq:paras-lin2}, it is easy to see that $\eta>1$. Next we find lower bounds of the terms on the left hand of \eqref{lin-ineq5-1Y}. Since $\eta\le \frac{1-\alpha}{1-\theta}$, we have 
\begin{equation}\label{eq:lin-convg-Phi-term} \eta(1-\theta)\Psi(z^{k+1},z^*)\le (1-\alpha)\Psi(z^{k+1},z^*).
\end{equation} 
Note $\|A\|_2\le 1$ and $$\left(\frac{\alpha\mu}{2}+\mu-\frac{\beta}{\tau_1}\right)I\succeq \frac{\frac{\alpha\mu}{2}+\theta\mu-\frac{\beta}{\tau_1}}{\eta_x+\mu(1-\theta)}(\eta_x I-\beta A^\top A)+ \frac{\frac{\alpha\mu}{2}+\theta\mu-\frac{\beta}{\tau_1}}{\eta_x+\mu(1-\theta)}\mu(1-\theta)I +\mu(1-\theta)I.$$  Hence, from $\eta\le 1+\frac{\frac{\alpha\mu}{2}+\theta\mu-\frac{\beta}{\tau_1}}{\eta_x+\mu(1-\theta)}$ and $P=\eta_x I-\beta A^\top A$, it follows that
\begin{equation}\label{eq:lin-convg-x-term}
\eta\|x^{k+1}- x^*\|^2_{P+\mu(1-\theta)I}
\le \|x^{k+1}- x^*\|^2_{P+(\frac{\alpha\mu}{2}+\mu)I-\frac{\beta}{\tau_1}A^\top A}.
\end{equation}
Similarly, since  
$$\left(\frac{3\alpha\nu}{2}-8c L_h^2\right)I\succeq \frac{\frac{3\alpha\nu}{2}-8c L_h^2-\frac{\beta(1-\theta)}{\tau_2}}{\eta_y+\frac{\beta(1-\theta)}{\tau_2}}(\eta_y I-\beta B^\top B) +\frac{\frac{3\alpha\nu}{2}-8c L_h^2-\frac{\beta(1-\theta)}{\tau_2}}{\eta_y+\frac{\beta(1-\theta)}{\tau_2}}\frac{\beta(1-\theta)}{\tau_2}I+\frac{\beta(1-\theta)}{\tau_2}I,$$
$Q=\eta_y I-\beta B^\top B$, and $B^\top B \preceq I$, we have
\begin{equation}\label{eq:lin-convg-y-term}
\eta\|y^{k+1}- y^*\|^2_{Q+\frac{\beta(1 -\theta)}{\tau_2}B^\top B}
\le \|y^{k+1}- y^*\|^2_{Q+(\frac{3\alpha\nu}{2}-8cL_h^2)I}.
\end{equation}
For the $r$-term, we note from the definition of $\eta$ that 
$$\eta\frac{ \beta(1-\theta)}{2}\le \big(\frac{ \beta(1-\theta)}{2}+\frac{1}{\gamma}\big)-\left(c\rho^2\big(\kappa+2(1-\theta)(1+\frac{1}{\delta})\big)+2c(\beta-\rho)^2\right).$$
In addition, since $\|B\|_2\le 1$, it holds $\|B^\top r^{k+1}\|\le \|r^{k+1}\|$, and thus
\begin{equation}\label{eq:lin-convg-r-term}\eta\frac{ \beta(1-\theta)}{2}\|r^{k+1}\|^2\le \big(\frac{ \beta(1-\theta)}{2}+\frac{1}{\gamma}\big)\|r^{k+1}\|^2-\left(c\rho^2\big(\kappa+2(1-\theta)(1+\frac{1}{\delta})\big)+2c(\beta-\rho)^2\right) \|B^\top r^{k+1}\|^2.
\end{equation}
Finally, it is obvious to have
\begin{equation}\label{eq:lin-convg-lam-term}
\begin{aligned}
&~ \frac{\eta}{2\rho}\left[\|\lambda^{k+1}-\lambda^*\|^2-(1-\theta)\|\lambda^{k}-\lambda^*\|^2+\frac{1}{\theta}\|\lambda^{k+1}-\lambda^k\|^2\right]\\
\le &~\left(\frac{1}{2\rho}+\frac{c}{2}\sigma_{\min}(BB^\top)\right)\left[\|\lambda^{k+1}-\lambda^*\|^2-(1-\theta)\|\lambda^{k}-\lambda^*\|^2+\frac{1}{\theta}\|\lambda^{k+1}-\lambda^k\|^2\right].
\end{aligned}
\end{equation}

Therefore, we obtain \eqref{eq:lin-cvg-ineq} by the definition of $\psi$ and adding \eqref{eq:lin-convg-Phi-term} through \eqref{eq:lin-convg-lam-term}.

\subsection{Proof of Lemma \ref{lem:bd-lam-by-y}}\label{sec:pf-lem-c1}
Let
$
\tilde{\lambda}^{k+1}=\lambda^k-\rho(Ax^{k+1}+B\tilde{y}^{k+1}-b).
$
Then from the update of $y$, we have
\begin{equation}\label{eq:lemmac2-lam-ineq}
\begin{aligned}
&~\EE\|B^\top(\lambda^{k+1}-\lambda^*)\|^2\\
=&~\theta\EE\|B^\top(\tilde{\lambda}^{k+1}-\lambda^*)\|^2+(1-\theta)\EE\|B^\top(\lambda^k-\lambda^*-\rho(Ax^{k+1}+By^k-b))\|^2.
\end{aligned}
\end{equation}

Below we bound the two terms on the right hand side of \eqref{eq:lemmac2-lam-ineq}. First, the definition of $\tilde{\lambda}^{k+1}$ together with \eqref{eq:opt-y} implies
\begin{equation}\label{eq:tlam}
B^\top\tilde{\lambda}^{k+1}=\nabla h(\tilde{y}^{k+1})+Q(\tilde{y}^{k+1}-y^k)+(\beta-\rho)B^\top (Ax^{k+1}+B\tilde{y}^{k+1}-b).
\end{equation}
Hence, by the Young's inequality and the condition in \eqref{kkt2}, we have
\begin{equation}\label{eq:lemmac2-kkt2-ineq}
\begin{aligned}
&~\theta\EE\|B^\top(\tilde{\lambda}^{k+1}-\lambda^*)\|^2\cr
\le&~2\theta\EE\|\nabla h(\tilde{y}^{k+1})-\nabla h(y^*)+Q(\tilde{y}^{k+1}-y^k)\|^2+2\theta(\beta-\rho)^2\EE\|B^\top (Ax^{k+1}+B\tilde{y}^{k+1}-b)\|^2.
\end{aligned}
\end{equation}
Since $\Prob(y^{k+1}=\tilde{y}^{k+1})=\theta$ and $\Prob(y^{k+1}=y^k)=1-\theta$, it follows that
\begin{equation*}
\begin{aligned}
&~\EE\|\nabla h(y^{k+1})-\nabla h(y^*)+Q(y^{k+1}-y^k)\|^2\\
=&~\theta\EE\|\nabla h(\tilde{y}^{k+1})-\nabla h(y^*)+Q(\tilde{y}^{k+1}-y^k)\|^2+(1-\theta)\EE\|\nabla h(y^k)-\nabla h(y^*)\|^2,
\end{aligned}
\end{equation*}
and thus
$$\theta\EE\|\nabla h(\tilde{y}^{k+1})-\nabla h(y^*)+Q(\tilde{y}^{k+1}-y^k)\|^2\le \EE\|\nabla h(y^{k+1})-\nabla h(y^*)+Q(y^{k+1}-y^k)\|^2.$$
Similarly,
$$\theta(\beta-\rho)^2\EE\|B^\top (Ax^{k+1}+B\tilde{y}^{k+1}-b)\|^2\le (\beta-\rho)^2\EE\|B^\top (Ax^{k+1}+B y^{k+1}-b)\|^2.$$
Plugging the above two equations into \eqref{eq:lemmac2-kkt2-ineq} and applying the Young's inequality and also the Lipschitz continuity of $\nabla h$ give
\begin{equation}\label{eq:lemmac2-kkt2-ineq2}
\theta\EE\|B^\top(\tilde{\lambda}^{k+1}-\lambda^*)\|^2
\le4\EE\big[L_h^2\|y^{k+1}-y^*\|^2+\|Q(y^{k+1}-y^k)\|^2\big]+2(\beta-\rho)^2\EE\|B^\top r^{k+1}\|^2.
\end{equation}

In addition, from the Young's inequality, it follows for any $\delta>0$ that
$$\|B^\top(\lambda^k-\lambda^*-\rho(Ax^{k+1}+By^k-b))\|^2\le (1+\delta)\|B^\top(\lambda^k-\lambda^*)\|^2+\rho^2(1+\frac{1}{\delta})\|B^\top(Ax^{k+1}+By^k-b)\|^2.$$
Note $\|B^\top(Ax^{k+1}+By^k-b)\|^2\le 2\|B^\top r^{k+1}\|^2+2\|B^\top B(y^{k+1}-y^k)\|^2$. Therefore, plugging \eqref{eq:lemmac2-kkt2-ineq2} and the above two inequalites into \eqref{eq:lemmac2-lam-ineq}, we complete the proof. 

\subsection{Proof of Lemma \ref{lem:ineq-lam-term}}\label{sec:pf-lem-c2}
It is straightforward to verify
\begin{align*}
&~\|B^\top(\lambda^{k+1}-\lambda^*)\|^2-(1-\theta)(1+\delta)\|B^\top(\lambda^k-\lambda^*)\|^2+\kappa\|B^\top(\lambda^{k+1}-\lambda^k)\|^2\cr
=&~\left[\begin{array}{c}\lambda^{k+1}-\lambda^*\\\lambda^{k+1}-\lambda^k
\end{array}\right]^\top\left[\begin{array}{cc}(1-(1-\theta)(1+\delta))
&(1-\theta)(1+\delta)\\(1-\theta)(1+\delta)&(\kappa-(1-\theta)(1+\delta))\end{array}\right]\otimes BB^\top\left[\begin{array}{c}(\lambda^{k+1}-\lambda^*)\\(\lambda^{k+1}-\lambda^k)
\end{array}\right],
\end{align*}
and
\begin{align*}
&~\left[\begin{array}{c}\lambda^{k+1}-\lambda^*\\\lambda^{k+1}-\lambda^k
\end{array}\right]^\top\left[\begin{array}{cc}\theta  & (1-\theta)\\ (1-\theta) & (\frac{1}{\theta}-(1-\theta))\end{array}\right]\otimes I\left[\begin{array}{c}\lambda^{k+1}-\lambda^*\\ \lambda^{k+1}-\lambda^k
\end{array}\right]\cr
= &~\left[\|\lambda^{k+1}-\lambda^*\|^2-(1-\theta)\|\lambda^{k}-\lambda^*\|^2+\frac{1}{\theta}\|\lambda^{k+1}-\lambda^k\|^2\right].
\end{align*}
Hence, we have the desired result from \eqref{eq:cond-kappa-del} and the inequality $U\otimes V\succeq \sigma_{\min}(V) U\otimes I$ for any PSD matrices $U$ and $V$.

\subsection{Proof of Lemma \ref{lem:three-lemma-together}}
From \eqref{eq:choice-P-mat} and \eqref{eq:choice-Q-mat}, we have
\begin{equation*}
\beta(1 -\theta)\frac{\tau_2}{2}\|A(x^{k+1}-x^k)\|^2\le \frac{1}{2}\|x^{k+1}- x^k\|_{P-L_mI}^2,
\end{equation*}
and
\begin{eqnarray*}
& & 4c\|Q(y^{k+1}-y^k)\|^2+2c\rho^2(1-\theta)(1+\frac{1}{\delta})\|B^\top B(y^{k+1}-y^k)\|^2
+\frac{\beta\tau_1}{2}\|B(y^{k+1}-y^k)\|^2\nonumber \\
&\le & \frac{1}{2}\|y^{k+1}- y^k\|_Q^2.
\end{eqnarray*}
The desired result is then obtained by adding the above two inequalities together with $\beta$ times of \eqref{cross-xy-term1}, $\beta(1-\theta)$ times of \eqref{cross-xy-term2}, $c$ times of both \eqref{bd-lam-term-n} and \eqref{eq:ineq-lam-term}, and also noting $\lambda^{k+1}-\lambda^k=-\rho r^{k+1}$.

\end{document}